\documentclass[english]{amsart}
\usepackage[T1]{fontenc}
\usepackage[latin9]{inputenc}
\usepackage{geometry}
\geometry{verbose}
\setcounter{secnumdepth}{5}
\setlength{\parskip}{\medskipamount}
\setlength{\parindent}{0pt}
\usepackage{textcomp}
\usepackage{amsthm}
\usepackage{amssymb}
\usepackage{esint}
\usepackage[all]{xy}

\makeatletter
\numberwithin{equation}{section}
\numberwithin{figure}{section}
  \theoremstyle{plain}
  \newtheorem*{thm*}{\protect\theoremname}
\theoremstyle{plain}
\newtheorem{thm}{\protect\theoremname}
  \theoremstyle{remark}
  \newtheorem{rem}[thm]{\protect\remarkname}
  \theoremstyle{plain}
  \newtheorem{prop}[thm]{\protect\propositionname}
  \theoremstyle{plain}
  \newtheorem{cor}[thm]{\protect\corollaryname}
  \theoremstyle{definition}
  \newtheorem{defn}[thm]{\protect\definitionname}
  \theoremstyle{plain}
  \newtheorem{lem}[thm]{\protect\lemmaname}

\makeatother

\usepackage{babel}
  \providecommand{\corollaryname}{Corollary}
  \providecommand{\definitionname}{Definition}
  \providecommand{\lemmaname}{Lemma}
  \providecommand{\propositionname}{Proposition}
  \providecommand{\remarkname}{Remark}
  \providecommand{\theoremname}{Theorem}
\providecommand{\theoremname}{Theorem}

\begin{document}

\title{Classicality for small slope overconvergent automorphic forms on
some compact PEL Shimura varieties of type C}

\author{Christian Johansson}
\address{Department of Mathematics, Imperial College London, London SW7 2AZ, UK}
\email{h.johansson09@imperial.ac.uk}

\begin{abstract}
We study the rigid cohomology of the ordinary locus in some compact
PEL Shimura varieties of type C with values in automorphic local systems
and use it to prove a small slope criterion for classicality of overconvergent
Hecke eigenforms. This generalises the work of Coleman, and is a first step in an ongoing project to extend the cohomological approach to classicality to higher-dimensional Shimura varieties.

\end{abstract}

\maketitle

\section{Introduction}

A celebrated theorem of Coleman states that if $f$ is an overconvergent
modular form of weight $k\geq 2$ and tame level $\Gamma_{1}(N)$ which is an eigenform for $U_{p}$ with
slope (i.e. the $p$-adic valuation of the eigenvalue) less than $k-1$,
then $f$ is in fact a (classical) modular form of weight $k$ for
the congruence subgroup $\Gamma_{1}(N)\cap\Gamma_{0}(p)$. This theorem,
usually referred to either as a ``classicality theorem'' or ``control
theorem'', generalized a previous result of Hida for ordinary $p$-adic
modular forms. It is the key result needed for extending constructions
on classical modular forms (such as construction of Galois representations)
to overconvergent modular forms of finite slope by $p$-adic interpolation
since it implies that classical forms are dense in Coleman families
and on the Coleman-Mazur eigencurve. The Galois representations associated with finite slope
overconvergent modular eigenforms were investigated by Kisin in \cite{Kis}, and
it was shown that these Galois representations are trianguline at $p$ and satisfy the Fontaine-Mazur
conjecture. 

In attempting to generalize Coleman's geometric theory for $p$-adic
interpolation of modular forms to other PEL Shimura varieties one
quickly runs into two major obstacles; defining families and proving
the analogue of the classicality criterion. Both problems seem hard,
as Coleman's methods do not generalize in an obvious way (using methods
similar to those of Coleman, Kisin and Lai constructed one-dimensional
families of Hilbert modular forms; this has recently been extended
to the Siegel-Hilbert case by Mok and Tan \cite{MoTa}). Instead other
methods of $p$-adic interpolation were developed (see e.g. \cite{Buzz},
\cite{Che}, \cite{Loe}, \cite{Eme} and \cite{Urb}), which have
been applied with great success to the deformation theory of Galois
representations.

Recently there has been much progress on also in the geometric theory,
using methods that are very different to Coleman's; see \cite{AIP}
for the construction of families and \cite{PS1}, \cite{PS2} and
\cite{Tian} for classicality results. The method for proving classicality
originates from work of Kassaei \cite{Kas2}, building on previous
work by Buzzard and Taylor \cite{BuTa} on the strong Artin
conjecture for two-dimensional representations of $Gal(\overline{\mathbb{Q}}/\mathbb{Q})$, and is in essence a geometric way of analytically continuing
the overconvergent form to the whole modular curve (or more generally
Shimura variety) of Iwahori level at $p$. In particular it is entirely
different from Coleman's proof, which is cohomological in nature,
and instead requires a very explicit understanding of the geometry
of the Shimura variety and the geometry of the $U_{p}$-correspondence.

In this paper, we revisit Coleman's original method and generalize
it to certain compact PEL Shimura varieties of type $C$, which are
closely related to Hilbert modular varieties. For the exact definitions
of objects and results mentioned in this introduction we refer to
the main body of the text. To define our Shimura varieties, we start
with a quaternion division algebra $B$ over a totally real field
$F$ of degree $d$ over $\mathbb{Q}$. We fix a rational prime $p$
and assume that $B$ is split at all places above $p$ and also split at every real place of $F$. Such a $B$ then gives rise to
a PEL data in a standard way, hence a reductive group $G^{\star}$
over $\mathbb{Q}$ and given an open compact subgroup $K\subseteq G^{\star}(\mathbb{A}^{\infty})$
we get an associated Shimura variety. For a special choice of $K=K_{1}(c,N)$,
let us denote the corresponding Shimura variety by $X$. It has potentially good
reduction at $p$ and we may study the ordinary locus $X_{\mathbb{F}_{p}}^{ord}$
in characteristic $p$ and its lift $X_{rig}^{ord}$ inside the rigid
analytification of the generic fibre $X_{rig}$ of $X$. We may define and study spaces of classical
(resp. overconvergent) automorphic forms on $X_{rig}$ and $X_{rig}^{ord}$,
defined as sections (resp. sections overconvergent along the non-ordinary locus) of the appropriate sheaf (see
section \ref{sub:2.1 Aut vec bundle}). These spaces carry actions of appropriately defined
Hecke algebras, analogous to the situation for modular curves and Hilbert modular varieties. Decompose $p$ as $p=\mathfrak{p}_{1}^{e_{1}}...\mathfrak{p}_{r}^{e_{r}}$ in $F$ and let $d_{i}=[F_{\mathfrak{p}_{i}}:\mathbb{Q}_{p}]$. By fixing embeddings $\overline{\mathbb{Q}}\hookrightarrow\mathbb{C}$ and  $\overline{\mathbb{Q}}\hookrightarrow\overline{\mathbb{Q}}_{p}$ we may index the weights of our automorphic forms by the embeddings $F\hookrightarrow\overline{\mathbb{Q}}_{p}$; we label them $k_{ij}$ with  $1\leq i\leq r$, $1\leq j \leq d_{r}$. We define a quantity $\lambda (k_{1},...,k_{r})$ for integers $k_{1},...,k_{r}$ with $k_{i}\geq 2d_{i}$ by 
\[
\lambda (k_{1},...,k_{r})={\rm{inf}}_{i}\left((k_{i}-2d_{i}){\rm{inf}}(1/2,1/d_{i})\right)
\]
Our main theorem is the following: 

\begin{thm*}
1) (Theorem \ref{thm:classicality}(b) ) Let $f$ be an overconvergent
Hecke eigenform of weight $(k_{11},...,k_{rd_{r}})$ ($k_{ij}\geq2$ for
all $i,j$) with $U_{p}$-slope less than $\inf\left(k_{ij}-1,\lambda(k_{1},...,k_{r})\right)$
(here $p$ has valuation $1$ and $k_{i}=\sum_{j}k_{ij}$). Then its system of Hecke eigenvalues
comes from the $p$-stabilization of a classical form of level $K$.

2) (Theorem \ref{thm:classicality, d=00003D1}(b) ) Let $F=\mathbb{Q}$. Assume that $f$
is an overconvergent Hecke eigenform of weight $k\geq2$ with $U_{p}$-slope
less than $k-1$. Then its system of Hecke eigenvalues is classical
of level $\Gamma_{1}(N)\cap\Gamma_{0}(pq_{1}...q_{r})$ (where the
$q_{i}\ne p$ are the primes where $B$ is ramified). 
\end{thm*}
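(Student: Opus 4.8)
The plan is to follow Coleman's cohomological strategy throughout. Part~1) (Theorem~\ref{thm:classicality}) is the main result; its proof realizes finite-slope overconvergent automorphic forms of weight $(k_{11},\dots,k_{rd_r})$ inside the rigid cohomology $H^\bullet_{\mathrm{rig}}(X^{ord}_{rig},\mathcal{V})$ of the ordinary locus with coefficients in the corresponding automorphic local system $\mathcal{V}$, realizes classical forms inside the de Rham (equivalently rigid) cohomology of the proper variety $X$, and compares the two via the excision/Gysin sequence along the non-ordinary locus. The operator $U_p$ acts compatibly, and its slopes on the ``error'' groups --- the rigid cohomology with supports along the non-ordinary strata --- are bounded below by quantities built from the Hodge and Frobenius slopes of $\mathcal{V}$ on each Newton stratum; the infimum of these bounds over the relevant cohomological degrees and over the places above $p$ is exactly $\inf(k_{ij}-1,\lambda(k_1,\dots,k_r))$, the factor $k_{ij}-1$ being the higher-dimensional analogue of Coleman's top-degree contribution, so below that threshold the restriction map is an isomorphism on finite-slope parts and $f$ is classical up to $p$-stabilization. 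I will concentrate on part~2), which I would deduce from the $d=1$ case of this argument together with the Jacquet--Langlands correspondence.

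When $F=\mathbb{Q}$ the variety $X$ is a compact Shimura curve: the Shimura variety attached to the quaternion division algebra $B/\mathbb{Q}$, which is split at $\infty$ (so $X$ is a curve) and at $p$, and ramified at the finite primes $q_1,\dots,q_r$. The first step is to rerun the comparison above in this case with the sharp bound. Since $\dim X=1$, the non-ordinary locus is a finite set of supersingular points, $H^\bullet_{\mathrm{rig}}$ of the ordinary locus is concentrated in degrees $\leq 2$, and the comparison between $H^1_{\mathrm{rig}}(X^{ord}_{rig},\mathcal{V}_k)$ and $H^1_{\mathrm{dR}}(X,\mathcal{V}_k)$ involves only the Gysin terms at the supersingular points, on which $U_p$ acts with slopes $\geq k-1$ by the same weight computation as on the modular curve ($\mathcal{V}_k$ has motivic weight $k-2$, and the relevant cohomological degree contributes the extra $1$). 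Hence the slope-$<k-1$ parts of overconvergent and classical weight-$k$ cohomology coincide, so the Hecke eigensystem of $f$ is that of a classical automorphic eigenform on $X$ of level $K=K_1(c,N)$, realized through one of its $p$-stabilizations (the Iwahori structure at $p$ entering only as a choice of root of the degree-two Hecke polynomial at $p$). The weaker bound $\lambda$ in part~1) is caused precisely by the intermediate cohomological degrees and the positive-dimensional Newton strata that occur when $d>1$; for a curve these are absent and Coleman's threshold $k-1$ survives intact.

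Next, Jacquet--Langlands. Since $X$ is proper, every automorphic form on it is cuspidal, and for $k\geq2$ the underlying automorphic representations are discrete series at $\infty$; the classical Jacquet--Langlands correspondence then gives a bijection, Hecke-equivariant away from $pq_1\cdots q_r$, between the weight-$k$ automorphic representations of $G^\star$ of level $K$ with Iwahori structure at $p$ and the weight-$k$ cuspidal automorphic representations of $GL_2/\mathbb{Q}$ that are discrete series at each $q_i$. Reading off the local components --- $\Gamma_1(N)$-structure away from $p$ and the $q_i$, newness (hence $\Gamma_0(q_i)$) at each $q_i$, and Iwahori level $\Gamma_0(p)$ at $p$ --- identifies the eigensystem of $f$ with that of a classical cusp form of level $\Gamma_1(N)\cap\Gamma_0(pq_1\cdots q_r)$, as claimed.

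The hard part will be the cohomological input behind the second paragraph (and behind part~1) in general): constructing the rigid-cohomological comparison for the ordinary locus with the non-constant automorphic coefficient sheaf, including the Bernstein--Gelfand--Gelfand-type resolution relating spaces of forms to cohomology, and computing the $U_p$-slopes of the rigid cohomology with supports along the non-ordinary locus precisely enough to recover $k-1$ in the curve case. By contrast the Jacquet--Langlands step is formal, the only real care being the local analysis at the ramified primes to pin down the level, and the reduction of part~2) to these two ingredients is immediate once the framework of part~1) is in place.
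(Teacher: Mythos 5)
Your outline of part 1) is broadly in the spirit of the paper (excision along the non-ordinary locus, slope bounds on the supports term via Newton strata and Kedlaya, Matsushima's formula for the proper variety, conversion of Frobenius slopes to $U_{p}$-slopes through the canonical subgroup), although you misplace where the two constants come from: in the paper the bound $\inf(k_{ij}-1)$ does not come from the cohomology with supports at all, but from the $\theta$-map/dual BGG step (Corollary \ref{cor:not in the image of theta}: shifting the central character rescales $U_{p}$, so slope $<k_{ij}-1$ forces $f\notin\mathrm{im}\,\theta$, which is what makes its class in $H^{d}_{rig}(X^{ord})=\mathrm{Coker}\,\theta$ nonzero), while only $\lambda$ comes from the supports term. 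The genuine error is in your treatment of part 2). Your key claim --- that for $d=1$ the Gysin contribution at the supersingular points has $U_{p}$-slopes $\geq k-1$, so that the slope-$<k-1$ parts of $H^{1}_{rig}(X^{ord}_{\overline{\mathbb{F}}_{p}},V^{\dagger}(k-2,k-2)^{\vee})$ and of the level-$K$ classical cohomology coincide --- is false. The supports term $H^{2}_{X^{ss},rig}$ contains (the image of) the space of weight-$k$ $p$-new forms of level $K^{p}K_{0}(p)$, whose $U_{p}$-slope is exactly $(k-2)/2<k-1$; equivalently, the Frobenius-slope estimate there is $[\lambda+1,\,k-1-\lambda]$ with $\lambda=(k-2)/2$, i.e.\ slope exactly $k/2$, which corresponds to $U_{p}$-slope $(k-2)/2$. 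The slope argument therefore only excludes the single slope $(k-2)/2$ (this is precisely the specialization of Theorem \ref{thm:classicality} to $d=1$), and cannot give the full range $[0,k-1)$.

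As a result your conclusion for part 2) is also too strong, and in fact false as stated: a classical $p$-new eigenform of level $K^{p}K_{0}(p)$ is overconvergent of slope $(k-2)/2<k-1$, but its eigensystem is \emph{not} a $p$-stabilization of a level-$K$ form, so ``classical up to $p$-stabilization at spherical level'' cannot hold below $k-1$; the correct conclusion (and the one the paper proves) is classicality at Iwahori level $K^{p}K_{0}(p)$, which on the $\mathrm{GL}_{2}/\mathbb{Q}$ side via Jacquet--Langlands is what the level $\Gamma_{1}(N)\cap\Gamma_{0}(pq_{1}\cdots q_{r})$ in the statement records (your Jacquet--Langlands paragraph itself is fine and essentially formal). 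To close the gap you need a different argument for the curve case, and the paper's is Coleman's dimension count rather than any slope bound on the supports term: Poincar\'e duality gives $\dim H^{2}_{X^{ss},rig}=SS(k-1)$, the $p$-new forms have dimension $(k-1)SS$ (resp.\ $SS-1$ for $k=2$) and inject into this group, and $H^{2}_{rig}(X_{\overline{\mathbb{F}}_{p}})$ vanishes for $k\geq3$ (is one-dimensional for $k=2$); the excision sequence then identifies $H^{1}_{rig}(X^{ord}_{\overline{\mathbb{F}}_{p}},V^{\dagger}(k-2,k-2)^{\vee})$ with the full space $H^{0}(Y,W(k,2-k))$ of classical forms of level $K^{p}K_{0}(p)$ as a Hecke module. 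Combined with the $\theta$-criterion --- which is the actual source of the threshold $k-1$ --- this yields Theorem \ref{thm:classicality, d=00003D1}(b).
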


Let us briefly outline the contents of the paper. Section \ref{sec:1}
is devoted to setting up the basic definitions of $B$, $G^{\star}$
and the Shimura varieties involved. We recall two different integral models (due to Deligne and Pappas \cite{DePa} resp. Sasaki \cite{Sas}, the latter using ideas of Pappas and Rapoport on local models) and the algebraic representation theory of $G^{\star}$. In section \ref{sec:2}
we define $p$-adic and overconvergent automorphic forms on $X$ using
the automorphic vector bundles of Harris and Milne and define the
Hecke operators acting on them. We give two definitions in particular
of the $U_{p}$-operator and show that they agree. As in the theory
for modular curves one of the definitions uses the canonical subgroup
and therefore establishes a very direct link to the Frobenius morphism
in characteristic $p$. A key construction in Coleman's proof is that
of a sheaf homomorphism 
\[
\theta=\theta^{k-1}\,:\,\omega^{2-k}\rightarrow\omega^{k}
\]

As is no doubt well-known to experts, this is Faltings's BGG complex \cite{Fal}
for the modular curve (and weight $k$). In sections \ref{sub:2.4 BGG} and \ref{sub:2.5 FBGG}
we give the analogues on $X$. In particular, this gives a ``theta
map'' 
\[
\theta\,:\,\bigoplus_{i,j}H^{0}\left(X_{rig},W^{\dagger}\left(k_{11},...,2-k_{ij},...,k_{rd_{r}}\right)\right)\,\longrightarrow H^{0}\left(X_{rig},W^{\dagger}\left(k_{11},...,k_{rd_{r}}\right)\right)
\]

for weights with $k_{ij}\geq2$ for all $i$ and $j$; here $H^{0}\left(X_{rig},W^{\dagger}\left(k_{11}^{\prime},...,k_{rd_{r}}^{\prime}\right)\right)$
denotes the spaces of overconvergent automorphic forms of weight $(k_{11}^{\prime},...,k_{rd_{r}}^{\prime})$.

Section \ref{sec:3} is the main part of the paper. We begin by recalling
some notions from rigid cohomology and overconvergent de Rham cohomology,
and define certain overconvergent $F$-isocrystals $\mathcal{E}_{\underline{k}}$ that play a key role in the arguments, analogous to the sheaves $\mathcal{H}_{k}$ defined in \S 2 of \cite{Col}.
In section \ref{sub: 3.1 comparison} we prove the main comparison
theorem, analogous to Theorem 5.4 of \cite{Col}. It identifies, in
particular, the cokernel of $\theta$ with the degree $d$ rigid cohomology
of $\mathcal{E}_{\underline{k}}$ on $X^{ord}_{\overline{\mathbb{F}}_{p}}$, via Faltings's BGG complex. Section
\ref{sub: 3.2 small slope in coh} proves the analogue of the crucial
but innocent-looking Lemma 6.2 of \textit{op. cit.}, showing that
forms of slope less than ${\rm inf}\, k_{ij}-1$ are not in image of
$\theta$ and hence that their system of Hecke eigenvalues occur in
the cohomology of $\mathcal{E}_{\underline{k}}$. 

So far the arguments have made no essential use of any specific properties
of our Shimura varieties; indeed the results and proofs would carry
over for example to any compact PEL Shimura variety with nonvanishing
Hasse invariant, or any PEL Shimura curve with nonvanishing Hasse
invariant. In section \ref{sub:3.3 excision} we use the excision
sequence to reduce the understanding of the degree $d$ rigid cohomology
of $\mathcal{E}_{\underline{k}}$ on $X^{ord}_{\overline{\mathbb{F}}_{p}}$ to understanding the degree $d$
cohomology on $X_{\overline{\mathbb{F}}_{p}}^{PR}$ and the degree $d+1$ local cohomology on the complement. Here $^{PR}$ denotes that we are using the model of Sasaki (the "Pappas-Rapoport" model).
The former is well understood, using comparison theorems between various
cohomology theories, by the classical theory of automorphic forms
(Matsushima's formula). We remark that this is where it is necessary to use the Pappas-Rapoport model; the rigid cohomology of the singular special fiber of the Deligne-Pappas model will most likely not agree with the de Rham cohomology of the generic fiber. To understand the local cohomology group we use information about the slopes of nonordinary abelian varieties for our moduli problem and some results of Kedlaya \cite{Ked2} to
prove bounds for the Frobenius-slopes. The next section then
translates these bounds into information about the $U_{p}$-operator,
using the link between $U_{p}$ and Frobenius given by the canonical
subgroup, and deduces part 1) our main theorem above. Finally, for
completeness, the last section gives a different treatment of the case $F=\mathbb{Q}$ using
a (somewhat simplified) version of Coleman's dimension-counting argument,
establishing part 2) of the main theorem (which is stronger than the special case $F=\mathbb{Q}$ of part 1) ).

Let us make some remarks regarding our results. First of all, what
we prove is that certain systems of Hecke eigenvalues are classical,
rather than the stronger fact that the forms themselves are classical.
This is the price we pay for working with Hecke modules and the flexibility
they offer. If one had some control on the dimension of the Hecke
modules we work with (as Coleman has in \cite{Col}) or knew multiplicity
one for overconvergent automorphic forms one could hope to recover the classicality
of the forms themselves, but these results are not available in our
setting (except when $F=\mathbb{Q}$ where the first technique is
available to us, see Remark \ref{rem: strengthening}). However, for
applications to eigenvarieties and Galois representations this weakening
is unimportant, as one passes directly to systems of Hecke eigenvalues
anyway. As for optimality, the results of the paper are in general far from what
is expected. On the automorphic side one would conjecture (by comparison with the theory for groups compact at infinity \cite{Loe}) that an overconvergent eigenform
of slope less than $\inf\left(k_{ij}-1\right)$ has a classical system
of Hecke eigenvalues. Our theorem proves this for example when there is only one prime above $p$ (i.e. $r=1$), $d\geq 2$ and the weight is "not too parallel" (more precisely, under the condition that $\inf\left(k_{ij}\right)\leq \left(\sum k_{1j}/d\right)-1$; note that $\inf\left(k_{ij}\right)\leq \sum k_{1j}/d$ always holds). However, as a vague rule, the bound gets worse as $r$ gets bigger. This may be compared with the bounds obtained in \cite{PS1} in the unramified Hilbert setting, which are more uniform though not quite optimal. We should also mention, and are grateful to the referee for pointing out to us, that to state an optimal conjecture, one should look at the Galois side. Specifically, one should look at when trianguline representations are de Rham, which has been done by Nakamura \cite{Nak}. One could also conjecture that any overconvergent eigenform not in the image
of $\theta$ is classical (for modular curves this is Corollary 7.2.1
of \cite{Col}). We prove this in our case when $F=\mathbb{Q}$ and
obtain a partial result in this direction (Theorem \ref{thm:classicality}(a)
) when $F\neq\mathbb{Q}$, of which part 1) of the main theorem above
is a corollary. 

Next, let us discuss the possibility of extending the methods to other
Shimura varieties. As mentioned above, everything up until section
\ref{sub:3.3 excision} generalizes e.g. to the case of compact PEL
Shimura varieties with a nonvanishing Hasse invariant (or indeed an
affine generalized ordinary locus), however everything after that
depends, in its current form, heavily on the specifics of our moduli problem (in particular its "${\rm{GL}}_{2}$-nature"). We expect that the methods should extend (modulo some issues with the cusps, which we understand have been resolved) to prove the analogous result for overconvergent cusp forms on Hilbert modular varieties.We believe that there should alsobe a different, though more technical, way
of completing the proof using (generalizations of) the results of Shin \cite{Shin} and
a comparison of trace formulas in $p$-adic (rigid) and $\ell$-adic
(etale) cohomology, which should allow for a substantial generalization
of our results. We are currently working out the details for some
unitary Shimura varieties studied by Harris-Taylor \cite{HaTa} and
Taylor-Yoshida \cite{TaYo} where the geometry is well documented. We believe that the techniques of rigid cohomology, in view of its direct relation to overconvergent automorphic forms, will be useful more generally in the theory of $p$-adic automorphic forms. As an example of this, let us mention that a central part of the spectacular recent announcement of Harris, Lan, Taylor and Thorne associating Galois representations to regular algebraic cuspidal automorphic representations of ${\rm GL}_{n}$ is the realization of a certain system of Hecke eigenvalues coming from an Eisenstein series inside a rigid cohomology group analogous to the ones studied in this paper and by Coleman. Finally, we should mention that after this paper had been submitted for publication we were made aware of ongoing work of Tian and Xiao \cite{TiXi} on classicality for overconvergent Hilbert modular forms when $p$ is unramified in $F$. They make a detailed study of the Ekedahl-Oort stratification and obtain very complete results about the structure of the rigid cohomology of the ordinary locus as a Hecke module in order to deduce classicality for small slope overconvergent Hilbert modular forms using techniques similar to those of this paper.

\subsection{Acknowledgments}
The author would like to thank his PhD supervisor Kevin Buzzard for suggesting
this problem and for his constant help and encouragement during every
aspect of this project. He would also like to thank his second supervisor
Toby Gee for valuable advice during the write-up, as well as Wansu
Kim, Christopher Lazda, James Newton, Shu Sasaki and Teruyoshi Yoshida for many helpful
discussions relating to this work, and Francesco Baldassarri and Bernard
Le Stum for answering questions about rigid and overconvergent de
Rham cohomology. The author wishes to thank the Engineering and Physical Sciences Research Council for supporting
him throughout his doctoral studies. It is also a pleasure to
thank the Fields Institute, where part of the write-up
of this paper was done, for their support and hospitality as well
as for the excellent working conditions provided. Finally the author wishes to thank the anonymous referee for correcting some typos and for insightful comments. In the first version of this paper, $p$ was assumed to be inert in $F$. The author wishes to sincerely thank the referee for pointing out that the methods should extend to the case of $p$ unramified, and for urging the author to investigate the general case.

\section{The groups and the Shimura varieties\label{sec:1}}

Throughout this article we fix a rational prime $p$.

\subsection{\label{sub:Groups-and-algebras}Groups and algebras}

Let $F$ be a totally real field of degree $d$ over $\mathbb{Q}$,
with ring of integers $\mathcal{O}_{F}$ in which $p$ splits as
\[
p=\mathfrak{p}_{1}^{e_{1}}...\mathfrak{p}_{r}^{e_{r}}
\]
Write $f_{i}$ for the inertia degree of $\mathfrak{p}_{i}$ and put $d_{i}=e_{i}f_{i}$. We let $B$ denote a totally indefinite quaternion algebra over $F$,
which we in addition assume to be split at all $\mathfrak{p}_{i}$ and a division algebra,
i.e. not equal to $M_{2/F}$. Denote by $\mathcal{O}_{B}$ a maximal
order of $B$, which will be fixed throughout the paper. We will also fix an isomorphism $\mathcal{O}_{B}\otimes_{\mathbb{Z}_{p}}\mathcal{O}_{F_{p}}\cong M_{2}(\mathcal{O}_{F_{p}})$ (where $F_{p}=F\otimes_{\mathbb{Q}}\mathbb{Q}_{p}$), via the transpose this gives an isomorphism $\mathcal{O}_{B^{op}}\otimes_{\mathbb{Z}_{p}}\mathcal{O}_{F_{p}}\cong M_{2}(\mathcal{O}_{F_{p}})$. The group
of invertible elements $\mathcal{O}_{B}^{\times}$ is the $\mathcal{O}_{F}$-points
of an algebraic group, and we denote by $G$ the restriction of scalars
of this group to $\mathbb{Z}$, i.e. for any ring $R$ :
\[
G(R)=(\mathcal{O}_{B}\otimes_{\mathbb{Z}}R)^{\times}
\]
 The reduced norm map ${\rm det}\,:\,\mathcal{O}_{B}^{\times}\rightarrow\mathcal{O}_{F}^{\times}$
defines a homomorphism of algebraic groups ${\rm det}\,:\, G\rightarrow Res_{\mathbb{Z}}^{\mathcal{O}_{F}}\mathbb{G}_{m}$.
We define an algebraic subgroup $G^{\star}\subseteq G$ by the Cartesian
diagram
\[
\xymatrix{G_{/\mathbb{Z}}^{\star}\ar[r]\ar[d]^{{\rm det}} & G_{/\mathbb{Z}}\ar[d]^{{\rm det}}\\
\mathbb{G}_{m/\mathbb{Z}}\ar[r] & Res_{\mathbb{Z}}^{\mathcal{O}_{F}}\mathbb{G}_{m/\mathcal{O}_{F}}
}
\]
where the lower horizontal map is the injection given on $R$-points
by $R^{\times}\rightarrow(\mathcal{O}_{F}\otimes_{\mathbb{Z}}R)^{\times}$,
$r\mapsto1\otimes r$. Note that the $R$-points of $G^{\star}$ are
\[
G^{\star}(R)=\left\{ g\in(\mathcal{O}_{B}\otimes_{\mathbb{Z}}R)\,|\,{\rm det}(g)\in R^{\times}\right\} 
\]
Let $E$ be a finite extension of $F$ that splits $G$.
We fix a Borel subgroup of $G$ over $E$ and by intersecting it with
$G^{\star}$ one gets a Borel $B^{\star}$ of $G^{\star}$. We fix
maximal tori $T$ and $T^{\star}$ of $G$ and $G^{\star}$ defined
over $E$. Since $B$ is split at all $\mathfrak{p}_{i}$ we note that 
\[
G^{\star}(\mathbb{Z}_{p})=\left\{ g\in{\rm GL}_{2}(\mathcal{O}_{F_{p}})\,|\,{\rm det}(g)\in\mathbb{Z}_{p}^{\times}\right\} 
\]
 
\[
G^{\star}(\mathbb{Q}_{p})=\left\{ g\in{\rm GL}_{2}(F_{p})\,|\,{\rm det}(g)\in\mathbb{Q}_{p}^{\times}\right\} 
\]
Let us once and for all fix embeddings of $\overline{\mathbb{Q}}$ into $\mathbb{C}$ and $\overline{\mathbb{Q}}_{p}$. This allows us to identify the archimedean places of $F$ with the embeddings of $F$ into $\overline{\mathbb{Q}}_{p}$. We will enumerate them using pairs $(i,j)$ with $1\leq i\leq r$ and $1\leq j \leq d_{i}$ (here $i$ is of course the same $i$ as in $\mathfrak{p}_{i}$).
 The $\mathbb{C}$-points of $G^{\star}$ and $T^{\star}$ may then be
described as follows
\[
G^{\star}(\mathbb{C})=\left\{ (g_{ij})\in\prod_{i,j}GL_{2}(\mathbb{C})\mid{\rm det}(g_{ij})={\rm det}(g_{i^{\prime}j^{\prime}})\,\forall (i,j)\neq (i^{\prime},j^{\prime})\right\} 
\]
\[
T^{\star}(\mathbb{C})=\left\{ (g_{ij})\in G^{\star}(\mathbb{C})\mid g_{ij}\:{\rm diagonal}\,\forall (i,j)\right\} 
\]
 The center of $\mathcal{O}_{B}$ is $\mathcal{O}_{F}$, hence the
center of $G$ is $Res_{\mathbb{Z}}^{\mathcal{O}_{F}}\mathbb{G}_{m}$
and the center $Z^{\star}$ of $G^{\star}$ is $\mathbb{G}_{m}$.
We have (with the above description of $G^{\star}(\mathbb{C})$)
\[
Z^{\star}(\mathbb{C})=\left\{ (\lambda I)_{ij}\in G^{\star}(\mathbb{C})\mid\lambda\in\mathbb{C}^{\times}\right\} 
\]

The derived group of $\mathcal{O}_{B}^{\times}$ (as an algebraic
group over $\mathcal{O}_{F}$) consists of the elements of reduced
norm 1. It follows that the derived subgroup of both $G$ and $G^{\star}$
is the kernel of the reduced norm map ${\rm det}$. As it is the same
for both $G$ and $G^{\star}$, we will denote it by $G^{der}$. We
have 
\[
G^{der}(\mathbb{C})=\prod_{i,j}{\rm SL}_{2}(\mathbb{C})
\]
 We fix a maximal torus $T^{der}$ of $G^{der}$ over $E$ and make
it so that
\[
T^{der}(\mathbb{C})=T^{\star}(\mathbb{C})\cap G^{der}(\mathbb{C})=\left\{ \left(\begin{array}{cc}
a_{ij}\\
 & a_{ij}^{-1}
\end{array}\right)\in\prod_{i,j}{\rm SL}_{2}(\mathbb{C})\mid a_{ij}\in\mathbb{C}^{\times}\right\} 
\]

\subsection{Representation theory of $G^{\star}$}

In this section we describe the finite dimensional representation
theory of $G^{\star}$ and its weights and central characters. As
with any reductive group, its finite dimensional irreducible representations
are given by a finite dimensional irreducible representation of its
derived group together with a matching central character, where matching
means that the representation and the central character must agree
on the intersection between the derived group and the center.
\begin{rem}
The intersection of $G^{der}(\mathbb{C})$ and $Z^{\star}(\mathbb{C})$
is $\left\{ \pm I\right\} =\left\{ \pm\left(\begin{array}{cc}
1\\
 & 1
\end{array}\right)_{ij}\in\prod_{i,j}{\rm SL}_{2}(\mathbb{C})\right\} $, so we need to check compatibility on the element $-I$.
\end{rem}
The representations of ${\rm SL}_{2}$ are well known and gives us
the following:

\begin{prop}
The irreducible finite dimensional representations of $G^{der}(\mathbb{C})$
are parametrized by $d$-tuples of non-negative integers $(k_{11},...,k_{rd_{r}})$,
corresponding to the representation
\[
\bigotimes_{i,j}Sym^{k_{ij}}(Sd_{ij})
\]
 where $Sd_{ij}$ is the representation given by projection $G^{der}(\mathbb{C})=\prod_{i,j}{\rm SL}_{2}(\mathbb{C})\rightarrow{\rm SL}_{2}(\mathbb{C})$
onto the $(i,j)$-th factor together with the standard (left) representation
of ${\rm SL}_{2}(\mathbb{C})$ on $\mathbb{C}^{2}$. All these representations
can be defined over any field extension of $F$ that splits $B$,
in particular $E$. The element $-I$ acts on \textup{$\bigotimes_{i,j}Sym^{k_{ij}}(Sd_{ij})$
by $(-1)^{\sum{k_{ij}}}$.}
\end{prop}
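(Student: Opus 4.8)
The plan is to deduce the statement from the classical representation theory of $\mathrm{SL}_2$ together with the standard description of the irreducibles of a product of groups. First I would recall that over an algebraically closed field of characteristic $0$ the irreducible finite-dimensional representations of $\mathrm{SL}_2$ are exactly the symmetric powers $\mathrm{Sym}^{k}$ of the standard $2$-dimensional representation for $k\geq 0$ (the highest-weight classification with respect to the diagonal torus), and that each of these is absolutely irreducible and defined over any base, being built functorially from the tautological representation on $R^{2}$. Next I would invoke the general fact that the irreducible finite-dimensional representations of a product $H_{1}\times\cdots\times H_{n}$ of reductive groups over an algebraically closed field are precisely the outer tensor products $V_{1}\otimes\cdots\otimes V_{n}$ of irreducibles of the factors, uniquely up to isomorphism; this one proves either via the decomposition of the coordinate rings, or by checking directly that $\mathrm{End}(V_{1}\otimes\cdots\otimes V_{n})$ is the base field, using absolute irreducibility of the $V_{m}$ and a dimension count. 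Combining these with $H_{ij}=\mathrm{SL}_{2,\mathbb{C}}$ gives the asserted parametrization of the irreducibles of $G^{der}(\mathbb{C})=\prod_{i,j}\mathrm{SL}_{2}(\mathbb{C})$ by $d$-tuples $(k_{11},\dots,k_{rd_{r}})$ of non-negative integers, with representation $\bigotimes_{i,j}\mathrm{Sym}^{k_{ij}}(Sd_{ij})$.

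For the field of definition, I would use the fixed isomorphism $G^{der}_{E}\cong\prod_{i,j}\mathrm{SL}_{2,E}$: each $Sd_{ij}$ is then the pullback along the $(i,j)$-th projection of the tautological representation of $\mathrm{SL}_{2,E}$, hence defined over $E$, and therefore so is every $\bigotimes_{i,j}\mathrm{Sym}^{k_{ij}}(Sd_{ij})$. Since $\mathrm{Sym}^{k}$ of the standard representation is absolutely irreducible, so is such a tensor product, and absolute irreducibility is preserved under arbitrary field extension; hence these representations are already irreducible over $E$, remain irreducible and pairwise non-isomorphic (by the uniqueness over $\mathbb{C}$) after base change, and exhaust the irreducibles. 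The same argument works verbatim over any extension of $F$ splitting $B$.

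Finally, for the action of $-I$: in a single $\mathrm{SL}_2$-factor, $-1$ acts on the standard representation by $-1$ and hence on $\mathrm{Sym}^{k_{ij}}$ by $(-1)^{k_{ij}}$; under the product decomposition the element $-I$ corresponds to $(-1,\dots,-1)$, so it acts on $\bigotimes_{i,j}\mathrm{Sym}^{k_{ij}}(Sd_{ij})$ by $\prod_{i,j}(-1)^{k_{ij}}=(-1)^{\sum k_{ij}}$. There is no genuine obstacle here: the statement is a packaging of classical facts, and the only point needing a little care is the descent to $E$ together with the insensitivity of (ir)reducibility to field extension, which is precisely what the absolute-irreducibility remark above handles.
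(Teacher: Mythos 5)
Your argument is correct and follows exactly the route the paper implicitly takes: it states the proposition as a consequence of the well-known representation theory of $\mathrm{SL}_2$, and your write-up simply supplies the standard details (irreducibles of a product are outer tensor products, symmetric powers of the standard representation are absolutely irreducible and defined over any splitting field, and $-I$ acts by $(-1)^{\sum k_{ij}}$). Nothing is missing.
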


Since $Z^{\star}\cong\mathbb{G}_{m}$ we deduce

\begin{cor}
The irreducible finite dimensional representations of $G^{\star}(\mathbb{C})$
are parametrized by $(d+1)$-tuples of integers $(k_{11},...,k_{rd_{r}},w)$,
with $k_{ij}\geq0$ for all $i,j$ and $w\equiv\sum k_{ij}$ mod $2$,
and this corresponds to the representation
\[
\left(\bigotimes_{i,j}Sym^{k_{ij}}(Sd_{ij})\right)\otimes{\rm det}^{\left(w-\sum k_{ij}\right)/2}
\]
 Here, similar to before $Sd_{ij}$ is the representation given by
projection $G^{\star}(\mathbb{C})\subseteq\prod_{i,j}{\rm GL}_{2}(\mathbb{C})\rightarrow{\rm GL}_{2}(\mathbb{C})$
onto the $(i,j)$-th factor together with the standard (left) representation
of ${\rm GL}_{2}(\mathbb{C})$ on $\mathbb{C}^{2}$, and ${\rm det}$
is the reduced norm character. \textup{$\bigotimes_{i,j}Sym^{k_{ij}}(Sd_{ij})$
corresponds to $(k_{11},...,k_{rd_{r}},\sum k_{ij})$ and ${\rm det}$ corresponds
to $(0,...,0,2)$. }As before, all representations can be defined
over any extension of $F$ that splits $B$, in particular $E$.
\end{cor}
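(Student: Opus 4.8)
The plan is to apply the standard dictionary recalled above: for a connected reductive group the irreducible finite-dimensional representations are in bijection with pairs consisting of an irreducible representation of the derived group and a character of the center that agree on the (finite, central) intersection of the two. So the first point to record is that $G^{\star}(\mathbb{C})$ is connected --- the reduced-norm map $G^{\star}(\mathbb{C})\to\mathbb{C}^{\times}$ sending $(g_{ij})$ to the common value of the $\det(g_{ij})$ is surjective with fibres that are cosets of the connected group $G^{der}(\mathbb{C})=\prod_{i,j}{\rm SL}_{2}(\mathbb{C})$ --- and that $G^{\star}=Z^{\star}\cdot G^{der}$ with $Z^{\star}\cap G^{der}=\{\pm I\}$: indeed, for $(g_{ij})\in G^{\star}(\mathbb{C})$ with common determinant $\lambda$ and $\mu\in\mathbb{C}^{\times}$ with $\mu^{2}=\lambda$, the tuple $(\mu^{-1}g_{ij})_{i,j}$ lies in $G^{der}(\mathbb{C})$ and $(g_{ij})=(\mu I)_{i,j}\cdot(\mu^{-1}g_{ij})_{i,j}$ with $(\mu I)_{i,j}\in Z^{\star}(\mathbb{C})$.

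Next, the characters of $Z^{\star}\cong\mathbb{G}_{m}$ are exactly $\lambda\mapsto\lambda^{w}$ for $w\in\mathbb{Z}$, and such a character agrees with the irreducible representation $\bigotimes_{i,j}Sym^{k_{ij}}(Sd_{ij})$ of $G^{der}$ on the intersection $\{\pm I\}$ precisely when $(-1)^{w}=(-1)^{\sum k_{ij}}$, i.e. $w\equiv\sum k_{ij}$ mod $2$, by the last sentence of the Proposition. Since distinct tuples $(k_{ij})$ give non-isomorphic representations of $G^{der}$ and distinct $w$ give distinct central characters, the dictionary yields the asserted bijection with $(d+1)$-tuples $(k_{11},\dots,k_{rd_{r}},w)$ subject to $k_{ij}\geq 0$ and the parity condition.

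It then remains only to check that the representation attached to such a tuple is the one displayed in the statement. Set $V=\big(\bigotimes_{i,j}Sym^{k_{ij}}(Sd_{ij})\big)\otimes{\rm det}^{(w-\sum k_{ij})/2}$, the exponent being an integer by the parity condition. As every element of $G^{der}$ has reduced norm $1$, the character ${\rm det}$ is trivial on $G^{der}$, so $V|_{G^{der}}=\bigotimes_{i,j}Sym^{k_{ij}}(Sd_{ij})$ and in particular $V$ is irreducible; and on $\lambda I\in Z^{\star}(\mathbb{C})$ the factor $Sym^{k_{ij}}(Sd_{ij})$ acts by $\lambda^{k_{ij}}$ while ${\rm det}(\lambda I)=\lambda^{2}$, so $V$ has central character $\lambda\mapsto\lambda^{\sum k_{ij}}\lambda^{w-\sum k_{ij}}=\lambda^{w}$. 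Hence $V$ is indexed by $(k_{11},\dots,k_{rd_{r}},w)$; specializing to $w=\sum k_{ij}$ (resp. all $k_{ij}=0$ and $w=2$) identifies $\bigotimes_{i,j}Sym^{k_{ij}}(Sd_{ij})$ with $(k_{11},\dots,k_{rd_{r}},\sum k_{ij})$ (resp. ${\rm det}$ with $(0,\dots,0,2)$). Finally the $Sym^{k_{ij}}(Sd_{ij})$ are defined over $E$ by the Proposition and ${\rm det}$ is defined over $\mathbb{Q}$, so $V$ is defined over $E$. There is no serious obstacle here; the only thing needing a little care is the bookkeeping in the central-character computation (the factor of $2$ coming from ${\rm det}(\lambda I)=\lambda^{2}$), together with the routine verification that the reductive-group dictionary applies, which is why I isolated connectedness and the decomposition $G^{\star}=Z^{\star}\cdot G^{der}$ at the outset.
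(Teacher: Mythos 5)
Your argument is correct and is essentially the paper's own: the corollary is deduced from the general dictionary stated at the start of the section (irreducibles of a reductive group = irreducible of the derived group plus a matching central character), the preceding Proposition, the Remark that $Z^{\star}\cap G^{der}=\{\pm I\}$, and $Z^{\star}\cong\mathbb{G}_{m}$. You merely make explicit some points the paper leaves implicit (connectedness, $G^{\star}=Z^{\star}\cdot G^{der}$, and the central-character bookkeeping), all of which check out.
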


We have a similar description of the characters of $T^{\star}$ and
$T^{der}$:

\begin{prop}
The characters of $T^{der}(\mathbb{C})$ are parametrized by $d$-tuples
of integers $(k_{11},...,k_{rd_{r}})$ and the characters of $T^{\star}(\mathbb{C})$
are parametrized by $d+1$-tuples $(k_{11},...,k_{rd_{r}},w)$ of integers
such that $w\equiv\sum k_{ij}$ mod $2$. We will denote the corresponding
characters by $\chi(k_{11},...,k_{rd_{r}})$ resp. $\chi(k_{11},...,k_{rd_{r}},w)$.
\end{prop}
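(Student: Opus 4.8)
The plan is to reduce everything to the elementary character theory of the tori $\mathbb{G}_m$ and $\prod_{i,j}\mathbb{G}_m$ and then splice the two pieces together using the central/derived-group bookkeeping already recorded in the preceding Remark. First I would note that $T^{der}(\mathbb{C})$ has been identified above with the set of tuples $\left(\begin{smallmatrix}a_{ij}&\\&a_{ij}^{-1}\end{smallmatrix}\right)$ inside $\prod_{i,j}\mathrm{SL}_2(\mathbb{C})$, so the assignment $(a_{ij})_{i,j}\mapsto \left(\begin{smallmatrix}a_{ij}&\\&a_{ij}^{-1}\end{smallmatrix}\right)_{i,j}$ is an isomorphism of algebraic tori $\prod_{i,j}\mathbb{G}_m \xrightarrow{\ \sim\ } T^{der}$. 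Since the character group of $\mathbb{G}_m$ is $\mathbb{Z}$ and the character group of a product of tori is the product of the character groups, $X^*(T^{der}(\mathbb{C}))\cong \mathbb{Z}^d$, and a $d$-tuple $(k_{11},\dots,k_{rd_r})$ corresponds to the character sending $(a_{ij})\mapsto \prod_{i,j}a_{ij}^{k_{ij}}$; this is the character $\chi(k_{11},\dots,k_{rd_r})$, and it is visibly the restriction to $T^{der}$ of the highest weight of $\bigotimes_{i,j}\mathrm{Sym}^{k_{ij}}(Sd_{ij})$.

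Next I would treat $T^\star(\mathbb{C})$. From the description $T^\star(\mathbb{C})=\{(g_{ij})\in \prod_{i,j}GL_2(\mathbb{C}) : g_{ij}\ \mathrm{diagonal},\ \det(g_{ij})\ \mathrm{independent\ of\ }(i,j)\}$, write $g_{ij}=\left(\begin{smallmatrix}x_{ij}&\\&y_{ij}\end{smallmatrix}\right)$ with the single constraint $x_{ij}y_{ij}=\lambda$ for all $(i,j)$, $\lambda\in\mathbb{C}^\times$. Thus $T^\star(\mathbb{C})$ is cut out in the $(2d)$-dimensional torus $\prod_{i,j}(\mathbb{G}_m\times\mathbb{G}_m)$ by $d-1$ independent equations, so it is a torus of dimension $d+1$; concretely the coordinates $(x_{ij})_{i,j}$ together with $\lambda$ (with no relation among them) give an isomorphism $T^\star(\mathbb{C})\cong \mathbb{G}_m^d\times\mathbb{G}_m$, once one sets $y_{ij}=\lambda x_{ij}^{-1}$. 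Hence $X^*(T^\star(\mathbb{C}))\cong \mathbb{Z}^{d+1}$ as an abstract group. To get the stated parametrization with the congruence $w\equiv \sum k_{ij}\bmod 2$, I would instead use the more symmetric generating set of characters $\{\prod_{i,j}(x_{ij}/y_{ij})^{(\ast)}\}$ together with $\det = \lambda$: a character of the form $\prod_{i,j}g_{ij}\mapsto \prod_{i,j}x_{ij}^{a_{ij}}y_{ij}^{b_{ij}}$ is well defined on $T^\star(\mathbb{C})$ (i.e. respects the relations) iff $a_{ij}-b_{ij}$ is the same for all $(i,j)$ — wait, that is not quite it either; more precisely, since only the product $x_{ij}y_{ij}$ is constrained to be independent of $(i,j)$, such a monomial descends to $T^\star$ iff $b_{ij}-b_{i'j'}$ can be absorbed, which forces the exponent of $\lambda$ to be consistently defined, and the upshot is that the characters are exactly $\chi(k_{11},\dots,k_{rd_r})\otimes\det^{(w-\sum k_{ij})/2}$ with $w\equiv \sum k_{ij}\bmod 2$ — the parity condition being exactly the condition that $(w-\sum k_{ij})/2$ be an integer. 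This matches the parametrization of irreducible representations in the preceding Corollary, as it must, since the highest weight of an irreducible representation is a character of $T^\star$ and conversely every dominant character is such a highest weight; the Remark's compatibility check on $-I$ is precisely the statement that the parity constraint is the only constraint.

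The genuinely delicate point — and the one I would be most careful about — is the bookkeeping of the parity condition $w\equiv\sum k_{ij}\bmod 2$: one must verify that it is exactly the image of $X^*(T^\star(\mathbb{C}))$ inside $X^*(T^{der}(\mathbb{C}))\oplus X^*(Z^\star(\mathbb{C}))$, i.e. that the natural map $X^*(T^\star)\to X^*(T^{der})\times X^*(Z^\star)$ is injective with image the index-two subgroup $\{((k_{ij}),w): w\equiv\sum k_{ij}\bmod 2\}$. This follows from the short exact sequence $1\to Z^\star\cap G^{der}\to Z^\star\times T^{der}\to T^\star\to 1$ (using $T^\star = Z^\star\cdot T^{der}$, valid since $G^\star = Z^\star\cdot G^{der}$) together with $Z^\star(\mathbb{C})\cap G^{der}(\mathbb{C})=\{\pm I\}$ as recorded in the Remark: dualizing, $X^*(T^\star)$ is the kernel of $X^*(Z^\star)\times X^*(T^{der})\to X^*(\{\pm I\})=\mathbb{Z}/2$, where the map sends $(w,(k_{ij}))$ to $w+\sum k_{ij}\bmod 2$ (since $\det(-I)=1$ so $\det^m$ is trivial on $-I$, while $\mathrm{Sym}^{k_{ij}}(Sd_{ij})$ contributes $(-1)^{k_{ij}}$). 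Everything else is the routine statement that characters of split tori are given by their cocharacter pairings, so I would state it and move on.
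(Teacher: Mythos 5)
Your argument is correct and takes the same (routine) route the paper leaves implicit, since the proposition is stated there without proof: the complete step is your final paragraph, identifying $X^{*}(T^{\star}(\mathbb{C}))$ with the index-two subgroup of $X^{*}(Z^{\star}(\mathbb{C}))\oplus X^{*}(T^{der}(\mathbb{C}))$ consisting of pairs killing the anti-diagonally embedded $\{\pm I\}$, which is precisely the parity condition $w\equiv\sum k_{ij}\bmod 2$. The only blemish is the (self-retracted) mid-paragraph claim about when a monomial $\prod_{i,j}x_{ij}^{a_{ij}}y_{ij}^{b_{ij}}$ is ``well defined'': every such monomial restricts to a character of $T^{\star}(\mathbb{C})$, and the real point --- that distinct monomials can restrict to the same character --- is exactly what your closing exact-sequence argument settles, so nothing essential is missing.
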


Next we wish to describe a representation which will be important
in what follows. This is the representation, defined over $\mathbb{Z}$,
given by the standard left action of $G^{\star}(R)\subseteq(\mathcal{O}_{B}\otimes_{\mathbb{Z}}R)^{\times}$
on $\mathcal{O}_{B}\otimes_{\mathbb{Z}}R$, and we will denote it
$Sd$. Over any extension that the $Sd_{ij}$ are defined over, it
splits non-canonically as $Sd=\bigoplus_{i,j}\left(Sd_{ij}\oplus Sd_{ij}\right)$.
The representations we will be working with are certain summands of the symmetric powers
$Sym^{k}(Sd)$ and certain of its subrepresentations. We have that
\[
Sym^{k}(Sd)=Sym^{k}\left(\bigoplus_{i}\bigoplus_{j}\left(Sd_{ij}\oplus Sd_{ij}\right)\right)=\bigoplus_{(k_{1},..,k_{r})}\bigotimes_{i}Sym^{k_{i}}\left(\bigoplus_{j}\left(Sd_{ij}\oplus Sd_{ij}\right)\right)
\]
where the sum in the furthermost right hand side is taken over all
$r$-tuples of non-negative integers $(k_{1},...,k_{r})$
such that $\sum k_{i}=k$. Put $Sd_{i}=\bigoplus_{j}\left(Sd_{ij}\oplus Sd_{ij}\right)$. The representations $\bigotimes_{i}Sym^{k_{i}}Sd_{i}$ are the representations that we will be interested. Note that they are defined over $\mathbb{Q}_{p}$. We have 
\[
Sym^{k_{i}}(Sd_{i})=\bigoplus_{(k_{i1},...,k_{id_{i}})}\bigotimes_{j}Sym^{k_{ij}}\left(Sd_{ij}\oplus Sd_{ij}\right)
\]
and
\[
Sym^{k_{ij}}(Sd_{ij}\oplus Sd_{ij})=\bigoplus_{0\leq k_{ij}^{\prime} \leq k_{ij}}\left(Sym^{k_{ij}^{\prime}}(Sd_{ij})\otimes Sym^{k_{ij}-k_{ij}^{\prime}}(Sd_{ij})\right)
\]
Moreover, we have 
\[
Sym^{k_{ij}-k_{ij}^{\prime}}(Sd_{ij})\otimes Sym^{k_{ij}^{\prime}}(Sd_{ij})=\bigoplus_{0\leq a_{ij}\leq k_{ij}/2}\left(Sym^{k_{ij}-2a_{ij}}(Sd_{ij})\otimes{\rm det}^{a_{ij}}\right)
\]
where the $a_{ij}$ are integers. Putting it together we have 
\[
Sym^{k_{i}}(Sd_{i})=\bigoplus_{(k_{i1},..,k_{id_{i}},a_{i1},...,a_{id_{i}})}\left(\bigotimes_{j}Sym^{k_{ij}-2a_{ij}}(Sd_{ij})\right)\otimes{\rm det}^{\sum a_{ij}}
\]
 with the $k_{ij}$ and $a_{ij}$ as above.

\subsection{Shimura varieties defined by $G^{\star}$ and their integral models}

In this section we briefly recall some more or less well known constructions, though as far as author is aware of they are not explicitly stated in the literature when $p$ ramifies in $F$. When $p$ is unramified see
e.g. \cite{Kott}, \cite{Mil05} and \cite{Lan}. $B$ carries an
involution $b\mapsto b^{\ast}$ of the first kind. Consider the opposite
$\mathbb{Q}$-algebra $B^{op}$ with involution $b\mapsto b^{\ast}$,
with the natural left action on $B$ . Pick $\xi\in B$ such that
$\xi^{\ast}=-\xi$, and define a $B^{op}$-involution on $B$ by $(x,y)=Tr_{F/\mathbb{Q}}Tr_{B/F}(x^{\ast}\xi y)$,
where $Tr_{B/F}$ is the reduced trace and $Tr_{F/\mathbb{Q}}$ is
the field trace. Together with the homomorphism $h\,:\,\mathbb{C}\rightarrow End_{B^{op}\otimes_{\mathbb{Q}}\mathbb{R}}(B\otimes_{\mathbb{Q}}\mathbb{R})=M_{2}(F\otimes_{\mathbb{Q}}\mathbb{R})$
given by 
\[
a+bi\mapsto\left(\begin{array}{cc}
1\otimes a & -1\otimes b\\
1\otimes b & 1\otimes a
\end{array}\right)
\]
This defines a rational PEL-datum of type C, and hence a Shimura datum
whose group is $G^{\star}$ acting on the disconnected Hermitian symmetric
domain $(\mathcal{H}^{+})^{d}\sqcup(\mathcal{H}^{-})^{d}$, where
$\mathcal{H}^{+}$ is the upper and $\mathcal{H}^{-}$ is the lower
half plane. The associated Shimura varieties are moduli spaces for
abelian varieties with extra structures and are defined over the reflex
field $\mathbb{Q}$. For a given neat compact open subgroup $K\subseteq G^{\star}(\mathbb{A}^{\infty})$
we denote the corresponding Shimura variety by $Sh_{K}$. For primes not dividing the level or the discriminant
of $F$ or $B$, the canonical models of $Sh_{K}$ have good reduction. In the more general case when $p$ is allowed to divide the discriminant of $F$ but not the discriminant of $B$, integral models may be constructed and studied by copying the methods of Deligne and Pappas \cite{DePa} in the Hilbert case. We will now very briefly recall the construction of these integral models and a few of their properties.

Fix an open compact subgroup $K=K^{p}K_{p}\subseteq G^{\star}(\mathbb{A}^{\infty})$
such that $K_{p}=G^{\star}(\mathbb{Z}_{p})$, $K^{p}$ will be specified below and fix a fractional ideal $\mathfrak{c}$ of $F$ (without loss of generality coprime to $p$). We denote the totally positive elements of $\mathfrak{c}$ by $\mathfrak{c}^{+}$. Let $N\geq 5$ be an integer, coprime to $p$. Define a functor $\mathcal{X}^{DP}$ sending a locally Noetherian $\mathbb{Z}_{p}$-scheme $S$ to the set of isomorphism classes of quadruples $(A,\iota,\phi,\eta)$ where
\begin{enumerate}

\item $A/S$ is an abelian scheme of dimension $2d$
\smallskip
\item $\iota\,:\,\mathcal{O}_{B^{op}}\rightarrow End_{S}(A)$
is a ring homomorphism
\smallskip
\item $\phi$ is an $\mathcal{O}_{F}$-linear homomorphism of $\mathfrak{c}$ into the sheaf of symmetric  homomorphisms $\lambda\,:\,A\rightarrow A^{\vee}$ satisfying $i(b)^{\vee}\circ\lambda=\lambda\circ i(b^{\ast})$ (as quasi-isogenies) for all $b\in \mathcal{O}_{B^{op},(p)}$. We require that $\phi$ maps $\mathfrak{c}^{+}$ to polarizations, and that the map $A\otimes\mathfrak{c}\rightarrow A^{\vee}$ induced by $\phi$ is an isomorphism (the "Deligne-Pappas condition").
\smallskip
\item $\eta$ is an $\mathcal{O}_{B^{op}}$-linear closed immersion $\mathcal{O}_{B^{op}}/N\mathcal{O}_{B^{op}}\rightarrow A[N]$ of group schemes.
\end{enumerate}

By standard methods, this functor is represented by a projective scheme over $\mathbb{Z}_{p}$ which we also denote $\mathcal{X}^{DP}$. Properness is the only thing that differs from the Hilbert case. It follows (via the valuative criterion of properness) from the potentially good reduction of pairs $(A,i)$ over the fraction field of a discrete valuation ring (see the Proposition in \S 6 of \cite{Bou}; the proof there does not require the "Rapoport condition" that is also assumed in their definition of an abelian scheme with an $\mathcal{O}_{B^{op}}$-action). The generic fibre of $\mathcal{X}^{DP}$ is the canonical model of $Sh_{K}$ base changed to $\mathbb{Q}_{p}$; we will denote it by $X$. From now on, we will simply write $A$ for an isomorphism class of quadruples as above.

\begin{rem} \label{rem : local geometry}
Assume that $A$ is a quadruple as above. It defines a principally polarized $p$-divisible group $A[p^{\infty}]$ of height $4d$ and dimension $2d$ with an action of $\mathcal{O}_{B^{op},p}=\mathcal{O}_{B^{op}}\otimes_{\mathbb{Z}}\mathbb{Z}_{p}\cong M_{2}(\mathcal{O}_{F,p})$. By Morita equivalence, this is equivalent to a principally polarized
$p$-divisible group $\mathcal{G}_{A}$ of height $2d$ and dimension
$d$ with an action of $\mathcal{O}_{F,p}$. The deformations of $\mathcal{G}_{A}$ controls the local geometry of the special fibre of $\mathcal{X}^{DP}$ by Serre-Tate theory. This
is identical to the situation in the Hilbert case, and we may hence use the local models of \cite{DePa} to study the geometry of $\mathcal{X}^{DP}$. In particular, the fibres of $\mathcal{X}^{DP}$ are normal.

Let us now specify the tame level $K^{p}$ used above. It is analogous to the choice of $"\Gamma_{1}(\mathfrak{c},N)"$-level structure often made in the literature on overconvergent Hilbert modular forms. Let $c\in\mathbb{A}_{F}^{\infty}$
be a fixed representative of a double coset in $F_{+}^{\times}\backslash\mathbb{A}_{F}^{\infty}/\hat{\mathcal{O}}_{F}^{\times}$,
where $F_{+}^{\times}$ denotes the totally positive elements of $F^{\times}$
and $\hat{\mathcal{O}}_{F}^{\times}=(\mathcal{O}_{F}\otimes_{\mathbb{Z}}\hat{\mathbb{Z}})^{\times}$.
This $c$ corresponds to $\mathfrak{c}$ and is relatively prime to $p$. Define 
\[
K_{1}^{G}(N)=\left\{ g\in{\rm GL}_{2}(\mathbb{A}_{F}^{\infty})\mid g\equiv\left(\begin{array}{cc}
1 & \ast\\
0 & \ast
\end{array}\right)\,{\rm mod}\, N\right\} 
\]
 Finally, we put 
\[
K_{1}(c,N)=G^{\star}(\mathbb{A}_{\mathbb{Q}}^{\infty})\cap\left(\begin{array}{cc}
c & 0\\
0 & 1
\end{array}\right)K_{1}^{G}(N)\left(\begin{array}{cc}
c & 0\\
0 & 1
\end{array}\right)^{-1}
\]
 where the intersection takes place in ${\rm GL}_{2}(\mathbb{A}_{F}^{\infty})$.
As $c$ and $N$ are prime to $p$, $K_{1}(c,N)_{p}=G^{\star}(\mathbb{Z}_{p})$.
For $N$ as above $K_{1}(c,N)$ is neat, and we put $K=K_{1}(c,N)$. As ${\rm det}(K)=\hat{\mathbb{Z}}^{\times}$, $X$ is geometrically connected. By the usual trick using Zariski's connectedness principle, the special fiber of $\mathcal{X}^{DP}$ is geometrically connected and hence geometrically irreducible by normality. 
\end{rem}

$\mathcal{X}^{DP}$ is not smooth. We will need the fact that we can resolve the singularities of $\mathcal{X}^{DP}$ after a ramified extension of valuation rings. This is identical to the situation in the Hilbert case as studied in \cite{Sas} (following work of Pappas and Rapoport on local models) so we will be rather brief. The author wishes to thank Shu Sasaki for explaining his work to him. We will follow \cite{Sas} closely in what follows. Let $L\subseteq\overline{\mathbb{Q}}_{p}$ be a finite extension of $\mathbb{Q}_{p}$ that contains the image of every embedding $F_{\mathfrak{p}_{i}}\hookrightarrow \overline{\mathbb{Q}}_{p}$ for every $i$, and let $\mathcal{O}_{L}$ denote its ring of integers. Let $\pi_{L}$ denote a fixed uniformizer of $L$, and let $L^{ur}$ denote the maximal unramified subfield of $L$ (and similarly for other $p$-adic fields).

Before we give the new moduli problem we need some more notation. For each $i$, fix a uniformizer $\pi_{i}$ of $F_{\mathfrak{p}_{i}}$ satisfying an Eisenstein polynomial $E_{i}(u)\in \mathcal{O}_{F_{\mathfrak{p}_{i}}^{ur}}[u]$. Moreover, we put $S_{i}={\rm Hom}_{\mathbb{Z}_{p}}(\mathcal{O}_{F_{\mathfrak{p}_{i}}^{ur}},\mathcal{O}_{L^{ur}})$. For every $\sigma\in S_{i}$ we put $E_{i,\sigma}(u)=\sigma(E_{i}(u))\in \mathcal{O}_{L^{ur}}[u]$ and let $\{\pi_{\sigma}(1),...,\pi_{\sigma}(e_{i})\}$ denote its set of of roots in $L$. Continuing, we denote by $\{\sigma(j)\}_{\sigma\in S_{i},1\leq j\leq e_{i}}$ the $d_{i}$ embeddings of $F_{\mathfrak{p}}$ into $L$, where $\sigma(j)$ is defined $\sigma(j)|_{\mathcal{O}_{F_{\mathfrak{p}_{i}}^{ur}}}=\sigma$ and that it maps $\pi_{i}$ to $\pi_{\sigma}(j)$. We have
\[
\mathcal{O}_{B^{op}}\otimes_{\mathbb{Z}}\mathcal{O}_{L}\cong M_{2}(\mathcal{O}_{F,p})\otimes_{\mathbb{Z}_{p}}\mathcal{O}_{L}\cong M_{2}(\mathcal{O}_{F}\otimes_{\mathbb{Z}}\mathcal{O}_{L})
\]
and
\[
\mathcal{O}_{F}\otimes_{\mathbb{Z}}\mathcal{O}_{L}\cong \bigoplus_{i}\mathcal{O}_{F_{\mathfrak{p}_{i}}}\otimes_{\mathbb{Z}_{p}}\mathcal{O}_{L}\cong \bigoplus_{i}\left(\mathcal{O}_{F_{\mathfrak{p}_{i}}}\otimes_{\mathcal{O}_{F_{\mathfrak{p}_{i}}^{ur}}}\left( \mathcal{O}_{F_{\mathfrak{p}_{i}}^{ur}} \otimes_{\mathbb{Z}_{p}}\mathcal{O}_{L}\right)\right)\cong
\]
\[
\cong \bigoplus_{i}\left(\mathcal{O}_{F_{\mathfrak{p}_{i}}}\otimes_{\mathcal{O}_{F_{\mathfrak{p}_{i}}^{ur}}}\left(\bigoplus_{\sigma\in S_{i}}\mathcal{O}_{L}\right)\right)\cong \bigoplus_{i}\bigoplus_{\sigma\in S_{i}}\left(\mathcal{O}_{F_{\mathfrak{p}_{i}}}\otimes_{\mathcal{O}_{F_{\mathfrak{p}_{i}}^{ur},\sigma}}\mathcal{O}_{L}\right)
\]
Put $\mathcal{O}_{i,\sigma}=\mathcal{O}_{F_{\mathfrak{p}_{i}}}\otimes_{\mathcal{O}_{F_{\mathfrak{p}_{i}}^{ur}}} \mathcal{O}_{L}$, then we have that $\mathcal{O}_{B^{op}}\otimes_{\mathbb{Z}}\mathcal{O}_{L}\cong \bigoplus_{i}\bigoplus_{\sigma\in S_{i}}M_{2}(\mathcal{O}_{i,\sigma})$. Let $A$ be an element of $\mathcal{X}^{DP}(S)$. Then we get decompositions 
\[
H^{dR}_{1}(A/S)=\bigoplus_{i}\bigoplus_{\sigma\in S_{i}}H_{1}^{dR}(A/S)_{i,\sigma}
\]
\[
Lie(A^{\vee}/S)^{\vee}=\bigoplus_{i}\bigoplus_{\sigma\in S_{i}}Lie(A^{\vee}/S)^{\vee}_{i,\sigma}
\]
where $H_{1}^{dR}(A/S)_{i,\sigma}$ is an $M_{2}(\mathcal{O}_{S}\otimes\mathcal{O}_{i,\sigma})$-module which is locally free of rank 4 as an $\mathcal{O}_{S}\otimes\mathcal{O}_{i,\sigma}$-module, and $Lie(A^{\vee}/S)^{\vee}_{i,\sigma}$ is an $M_{2}(\mathcal{O}_{S}\otimes\mathcal{O}_{i,\sigma})$-module that is, Zariski locally on $S$, a locally free direct summand of  $H_{1}^{dR}(A/S)_{i,\sigma}$ of rank $2e_{i}$ as an $\mathcal{O}_{S}$-module.

We define a functor $\mathcal{X}^{PR}$ from the category of locally Noetherian schemes over $\mathcal{O}_{L}$ to sets by letting, for $S$ a scheme over $\mathcal{O}_{L}$, $\mathcal{X}^{PR}(S)$ be the set of isomorphism classes of data 
\[
(A,(F_{i,\sigma}(j))_{i,\sigma,j})
\]
where
\begin{enumerate}
\item $A\in \mathcal{X}^{DP}(S)$
\medskip
\item For every $i$ and $\sigma\in S_{i}$, we have a filtration
\[
0=F_{i,\sigma}(0)\subseteq F_{i,\sigma}(1)\subseteq ... \subseteq F_{i,\sigma}(e_{i})=Lie(A^{\vee}/S)^{\vee}_{i,\sigma}
\]
of $M_{2}(\mathcal{O}_{S}\otimes\mathcal{O}_{i,\sigma})$-modules such that
\smallskip
\begin{enumerate}

\item each $F_{i,\sigma}(j)$ is, Zariski locally on $S$, a direct summand of $Lie(A^{\vee}/S)^{\vee}_{i,\sigma}$ of rank $2j$ as an $\mathcal{O}_{S}$-module and

\item on the quotient $F_{i,\sigma}(j)/F_{i,\sigma}(j-1)$ ($j\geq 1$), which is a locally free $\mathcal{O}_{S}$-module of rank $2$, $\mathcal{O}_{B^{op}}$ acts via 
\[
\mathcal{O}_{B^{op}}\hookrightarrow M_{2}(\mathcal{O}_{F_{\mathfrak{p}_{i}}})\overset{\sigma(j)}{\hookrightarrow}M_{2}(\mathcal{O}_{L})\rightarrow M_{2}(\mathcal{O}_{S}) 
\]
\end{enumerate}

\end{enumerate}
Using Morita equivalence the proofs of \cite{Sas} carry over verbatim and shows that the forgetful natural transformation $\mathcal{X}^{PR}\rightarrow \mathcal{X}^{DP}_{\mathcal{O}_{L}}$ (subscript denoting base change) is relatively representable by a projective morphism and hence that $\mathcal{X}^{PR}$ is representable. We will denote the representing object by $\mathcal{X}^{PR}$ as well. As in \cite{Sas}, $\mathcal{X}^{PR}$ is smooth over $\mathcal{O}_{L}$ (this is proved using Grothendieck-Messing theory). Moreover, the morphism $\mathcal{X}^{PR}\rightarrow \mathcal{X}^{DP}_{\mathcal{O}_{L}}$ is an isomorphism over the Rapoport locus (which coincides with the smooth locus of $ \mathcal{X}^{DP}_{\mathcal{O}_{L}}$), which includes the ordinary locus in the special fibre and the whole generic fibre. In particular, the generic fibre of $\mathcal{X}^{PR}$ is $X_{L}$ and the fibres of $\mathcal{X}^{PR}$ are geometrically connected.

Next we will add level structure at $p$. Define two subgroups $K_{0}(p)$,
$K_{0}^{0}(p)$ of $G^{\star}(\mathbb{Z}_{p})$ by 
\[
K_{0}(p)=\left\{ g\in G^{\star}(\mathbb{Z}_{p})\mid g\equiv\left(\begin{array}{cc}
\ast & \ast\\
0 & \ast
\end{array}\right)\,{\rm mod}\, p\right\} 
\]
\[
K_{0}^{0}(p)=\left\{ g\in G^{\star}(\mathbb{Z}_{p})\mid g\equiv\left(\begin{array}{cc}
\ast & 0\\
0 & \ast
\end{array}\right)\,{\rm mod}\, p\right\} 
\]
 We let $Y$ resp. $Z$ be the base change of the canonical model of $Sh_{K^{p}K_{0}(p)}$ resp. $Sh_{K^{p}K_{0}^{0}(p)}$ to $\mathbb{Q}_{p}$. $Y$
resp. $Z$ parametrize pairs $(A,H)$ resp. triples $(A,H_{1},H_{2})$,
where $A$ is a point of $X$ and $H$, $H_{1}$ and $H_{2}$ are
finite flat (in fact etale) $\mathcal{O}_{B^{op}}$-stable subgroups of $A$ of
rank $p^{d}$ which are killed by $p$ and isotropic with respect
to the polarization. Moreover we require that $H_{1}\cap H_{2}=0$. The
relative representability of these moduli problems over $X$ may be
shown by standard methods (they are closed subschemes of Grassmannians). We have finite etale morphisms $Z\rightrightarrows Y\rightarrow X$
forgetting $H_{1}$ resp. $H_{2}$ resp. $H$. Since ${\rm det}(K^{p}K_{0}(p))={\rm det}(K^{p}K_{0}^{0}(p))=\hat{\mathbb{Z}}^{\times}$,
$Y$ and $Z$ are geometrically connected.

Let $k_{L}$ denote the residue field of $L$. We will denote the special fibres of $\mathcal{X}^{DP}$ over $\mathbb{F}_{p}$, $k_{L}$ resp. $\overline{\mathbb{F}}_{p}$ by $X_{\mathbb{F}_{p}}^{DP}$ , $X_{k_{L}}^{DP}$ resp. $X_{\overline{\mathbb{F}}_{p}}^{DP}$, and similarly for $\mathcal{X}^{PR}$.

\begin{rem}
We will use the notation $(A,...)$ as above to denote points of the
special and/or generic fibres of our moduli spaces; however we will
also use the notation $\mathcal{A}^{DP}$, $\mathcal{A}^{PR}$, $A$, $A_{\mathbb{F}_{p}}^{DP}$
etc. (analogous to $\mathcal{X}^{DP}$, $\mathcal{X}^{PR}$, $X$, $X_{\mathbb{F}_{p}}^{DP}$ etc.)
to denote the (abelian scheme associated with) the universal object
over the appropriate moduli space. We hope there will be no confusion
arising from this. Occasionally we will use the superscript $^{univ}$ to distinguish the universal object. We will denote the map from the universal object to the moduli space by $\pi$ and the zero section of the universal object by $e$; if there is need to identify which moduli space we are dealing we will use appropriate subscripts; we hope that no confusion will arise from this either.
\end{rem}

\section{Automorphic forms and Hecke operators\label{sec:2}}

\subsection{\label{sub:2.1 Aut vec bundle}Automorphic vector bundles and automorphic
forms}

One way to define holomorphic automorphic forms is to use the automorphic
vector bundle construction, as described e.g. in \cite{Mil}. The
theorem is the following, and only applies in characteristic $0$ and
therefore applies equally well to $X$, $Y$ or $Z$ or any other
neat level. By abuse of notation, we also let $\chi(k_{11},...,k_{rd_{r}},w)$
denote the representation of $B^{\star}$ obtained from $\chi(k_{11},...,k_{rd_{r}},w)$
by letting the unipotent part of $B^{\star}$ act trivially.

\begin{thm}
To any finite dimensional representation of $B^{\star}$ we may functorially
associate a vector bundle on $X$ such that equivariant maps between
representations go to Hecke-equivariant $\mathcal{O}_{X}$-linear
maps. To any finite dimensional representation of $G^{\star}$ we may
functorially associate a vector bundle with an integrable connection.
These bundles and maps are defined over the same fields as the representations
and maps are (they are defined on the base change of canonical model to said field; we base change them to appropriate extensions of $\mathbb{Q}_{p}$), and the construction respects direct sums and tensor
operations, and the rank of the bundle is the dimension of the representation.
We will denote by $W(k_{11},...,k_{rd_{r}},w)$ the line bundle associated
to $\chi(k_{11},...,k_{rd_{r}},w)$ and by $V(k_{11},...,k_{rd_{r}},w)$ the vector
bundle with connection associated to $\left(\bigotimes_{i,j}Sym^{k_{ij}}(St_{ij})\right)\otimes{\rm det}^{\left(w-\sum k_{ij}\right)/2}$.
The representation ${\rm det}$ goes to the Tate twist $\mathbb{Q}_{p}(1)$.\end{thm}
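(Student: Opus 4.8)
The plan is to build these bundles directly out of the universal abelian variety $\mathcal{A}\to X$ over the canonical model of $Sh_{K}$ over $\mathbb{Q}$, rather than descending the analytic (equivariant-bundle-on-the-compact-dual) construction of \cite{Mil} from $\mathbb{C}$ by hand; the de Rham incarnation makes the rationality claims transparent, and at the end one checks that over $\mathbb{C}$ the two constructions agree. The whole point of the theorem is really this rationality together with the explicit identification of $W(\dots)$, $V(\dots)$ and the fact that $\det$ goes to $\mathbb{Q}_{p}(1)$.

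\textbf{The standard de Rham bundle.} First I would form $\mathcal{H}=H_{1}^{dR}(\mathcal{A}/X)$, a locally free $\mathcal{O}_{X}$-module of rank $4d$ carrying the $\mathcal{O}_{B^{op}}$-action induced by $\iota$, the integrable Gauss--Manin connection $\nabla$, a perfect alternating pairing $\mathcal{H}\times\mathcal{H}\to\mathcal{O}_{X}(1)$ induced by $\phi$ (here $\mathcal{O}_{X}(1)$ denotes the de Rham realisation of $\mathbb{Q}_{p}(1)$, a line bundle with connection), and the Hodge filtration $0\to Lie(\mathcal{A}^{\vee})^{\vee}\to\mathcal{H}\to Lie(\mathcal{A})\to0$ of $\mathcal{O}_{B^{op}}$-modules. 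All of this is, by construction, defined over $\mathbb{Q}$. Next I would base change to $E$ (and to suitable finite extensions of $\mathbb{Q}_{p}$ when needed), apply Morita equivalence for the $M_{2}$-action, and decompose along the $d$ embeddings labelled $(i,j)$. This produces rank-$2$ bundles $\mathcal{H}_{ij}$, each carrying the $(i,j)$-component of $\nabla$, a line subbundle $\omega_{ij}\subseteq\mathcal{H}_{ij}$ cut out by the Hodge filtration, and a canonical isomorphism of $\wedge^{2}\mathcal{H}_{ij}$, with its connection, onto $\mathcal{O}_{X}(1)$ (the same for all $(i,j)$, since $\det$ is equal on all factors of $G^{\star}$).

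\textbf{Representations of $G^{\star}$, then of $B^{\star}$.} Since $Sd$ is by definition the standard left action on $\mathcal{O}_{B}\otimes R$, whose de Rham realisation over $X$ is exactly $\mathcal{H}$, I would read off $Sd_{ij}\mapsto\mathcal{H}_{ij}$ and, as $\det=\wedge^{2}Sd_{ij}$, $\det\mapsto\wedge^{2}\mathcal{H}_{ij}=\mathcal{O}_{X}(1)$, which is the last sentence of the theorem. Extending by direct sums, tensor products and symmetric powers (and cutting out summands with idempotents, which are rational over the field of definition of the representation, cf.\ the decomposition in \S1.2) yields a functor on all finite-dimensional representations of $G^{\star}$; applied to the irreducible $\left(\bigotimes_{ij}Sym^{k_{ij}}(Sd_{ij})\right)\otimes\det^{(w-\sum k_{ij})/2}$ it produces $V(k_{11},\dots,k_{rd_{r}},w):=\left(\bigotimes_{ij}Sym^{k_{ij}}\mathcal{H}_{ij}\right)\otimes\mathcal{O}_{X}(1)^{(w-\sum k_{ij})/2}$, with the induced connection, which is integrable because $\nabla$ is and integrability is preserved by these operations. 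For a representation of $B^{\star}$ on which the unipotent radical acts trivially --- equivalently a representation of $T^{\star}$ --- I would use that the Hodge filtration reduces the structure group of $\mathcal{H}$ from $G^{\star}$ to $B^{\star}$ (concretely, the lines $\omega_{ij}\subseteq\mathcal{H}_{ij}$ define a $B^{\star}$-torsor) and apply the associated-bundle construction for that torsor; on $\chi(k_{11},\dots,k_{rd_{r}},w)$ it yields the line bundle $W(k_{11},\dots,k_{rd_{r}},w)=\left(\bigotimes_{ij}\omega_{ij}^{\otimes k_{ij}}\right)\otimes\mathcal{O}_{X}(1)^{(w-\sum k_{ij})/2}$, the highest-weight line of the matching $G^{\star}$-representation. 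Functoriality, compatibility with sums and tensor operations, and the equality of rank with dimension are then formal; there is no connection on the $W$'s in general because Griffiths transversality moves $\omega_{ij}$ out of itself. Hecke-equivariance holds because the entire picture is pulled back along the Hecke correspondences, which act on $\mathcal{A}$ by quasi-isogenies, so equivariant maps of representations go to Hecke-equivariant $\mathcal{O}_{X}$-linear maps. Finally I would record the comparison with \cite{Mil}: the Borel embedding identifies the symmetric domain with an open subset of the compact dual $\prod_{ij}\mathbb{P}^{1}=G^{\star}(\mathbb{C})/B^{\star}$ compatibly with the Hodge filtration, so over $\mathbb{C}$ the $G^{\star}(\mathbb{C})$-equivariant bundle attached to a $B^{\star}$- (resp.\ $G^{\star}$-) representation pulls back and descends to the bundle (resp.\ bundle with integrable connection) just constructed.

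\textbf{Main obstacle.} The hard part is the rationality --- that the bundles, the connection and the Hecke action are defined over $\mathbb{Q}$ and descend to the field of definition of the representation, rather than living only over $\mathbb{C}$ or $\overline{\mathbb{Q}}$. Here this is clean precisely because $\mathcal{H}=H_{1}^{dR}(\mathcal{A}/X)$ lives over the canonical model over $\mathbb{Q}$ together with all of its structures; for a Shimura variety not of PEL type one would instead have to invoke the descent theorems of Harris and Milne. A secondary and purely bookkeeping difficulty is fixing conventions so that the exponents of $\omega_{ij}$ and of $\mathcal{O}_{X}(1)$ come out exactly as stated, and so that $\wedge^{2}\mathcal{H}$ rather than its dual is the de Rham realisation of $\mathbb{Q}_{p}(1)$; this is pinned down once the normalisations of $\phi$ and of the Hodge filtration are fixed.
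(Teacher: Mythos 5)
Your construction is essentially correct, but it is a genuinely different route from the one the paper takes: the paper does not prove this theorem at all, it simply quotes the Harris--Milne theory of automorphic vector bundles (\cite{Mil}), where the bundles are obtained by descending $G^{\star}(\mathbb{C})$-equivariant bundles on the compact dual to the canonical model, and the rationality, Hecke-equivariance and the identification of ${\rm det}$ with the Tate twist are part of that general theory of canonical models. You instead rebuild the functor PEL-theoretically from $H_{1}^{dR}(\mathcal{A}/X)$ with its $\mathcal{O}_{B^{op}}$-action, polarization pairing, Gauss--Manin connection and Hodge filtration, in the style of \cite{ChFa}, \cite{Lan} and \cite{LaPo}. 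What your approach buys is exactly what you say: rationality over the reflex field and over the field of definition of the representation is transparent, integral structures come for free, and the statement ${\rm det}\mapsto\mathbb{Q}_{p}(1)$ is visible as $\wedge^{2}\mathcal{H}_{ij}\cong\mathcal{O}_{X}(1)$ via the polarization; what the cited approach buys is generality (arbitrary Shimura data, all representations of the parabolic) and the fact that the later ingredients of the paper (the Hecke normalizations of section 2.3 and Faltings's dual BGG complex in Theorem \ref{thm: Faltings's BGG}) are stated for the Harris--Milne bundles, so your closing comparison over $\mathbb{C}$ via the Borel embedding is not an optional remark but is actually needed if the rest of the paper is to be quoted verbatim; as written it is only asserted. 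Two smaller caveats. First, you only define the functor on representations of $B^{\star}$ trivial on the unipotent radical; the torsor you construct from the lines $\omega_{ij}\subseteq\mathcal{H}_{ij}$ in fact handles every finite dimensional representation of $B^{\star}$, so you should say so, since that is what the theorem asserts. Second, watch the bookkeeping against the paper's own conventions: there $\omega_{ij}$ is cut from $H^{1}_{dR}$ (cohomology) and corresponds to $\chi(0,...,0,1,0,...,0,-1)$, so $\bigotimes_{i,j}\omega_{ij}^{k_{ij}}=W(k_{11},...,k_{rd_{r}},-\sum k_{ij})$ and the central-character twist needed to reach $W(k_{11},...,k_{rd_{r}},w)$ is by ${\rm det}^{(w+\sum k_{ij})/2}$, not $(w-\sum k_{ij})/2$; your exponent is consistent only with your homology-side $\omega$'s, which differ from the paper's by a Tate twist, so this must be pinned down (as you acknowledge) before the formula for $W$ can be used interchangeably with the paper's.
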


\begin{defn}
An automorphic form of weight $(k_{11},...,k_{rd_{r}},w)$
and level $K_{1}(c,N)$ is a global section of $W(k_{11},...,k_{rd_{r}},w)$
on $X$ (and similarly, changing the level, for $Y$ and $Z$).
\end{defn}

The PEL datum is set up such that the standard representation $Sd$
corresponds to $H_{1}^{dR}(A/X)$, hence $H_{dR}^{1}(A/X)$ corresponds
to $Sd^{\vee}$. $Sd^{\vee}$, as a $T^{\star}$-representation, is
\[
Sd^{\vee}=\left(\chi(1,0,..,0,-1)^{\oplus2}\oplus...\oplus\chi(0,...,0,1,-1)^{\oplus2}\right)\oplus
\]
\[
\oplus\left(\chi(-1,0,..,0,-1)^{\oplus2}\oplus...\oplus\chi(0,...,0,-1,-1)^{\oplus2}\right)
\]
 Another bundle that will occur later is $\Omega_{X}^{d}$. To start
with, $\Omega_{X}^{1}$ corresponds to the dual of the adjoint representation
of $B^{\star}$ on $Lie(G^{\star})/Lie(B^{\star})=\chi(2,0,..,0,0)\oplus...\oplus\chi(0,...,0,2,0)$
(note the trivial central character). Therefore $\Omega_{X}^{d}=\wedge^{d}\Omega_{X}^{1}$
corresponds to $\chi(2,...,2,0)$.

\begin{rem}
1) Let us briefly explain the relation between this and the perhaps
more standard way of defining automorphic forms on $X$, as in e.g.
\cite{Kas1}, from which part of this discussion is taken. This will
also provide an integral structure to our sheaves of automorphic forms (at least after base change to $L$).
Recall our identification of $\mathcal{O}_{B^{op}}\otimes_{\mathbb{Z}}\mathbb{Z}_{p}$
with $M_{2}(\mathcal{O}_{F_{p}})$, and consider the two standard
orthogonal idempotents $e_{1}$ and $e_{2}$ in $M_{2}(\mathcal{O}_{F_{p}})$.
The sheaf $\pi_{\ast}\Omega_{\mathcal{A}^{PR}/\mathcal{X}^{PR}}^{1}=e^{\ast}\Omega_{\mathcal{A}^{PR}/\mathcal{X}^{PR}}^{1}$
injects into $H_{dR}^{1}(\mathcal{A}^{PR}/\mathcal{X}^{PR})$ and corresponds to 
\[
\chi(1,0,..,0,-1)^{\oplus2}\oplus...\oplus\chi(0,...,0,1,-1)^{\oplus2}
\]
on the generic fibre. $\pi_{\ast}\Omega_{\mathcal{A}^{PR}/\mathcal{X}^{PR}}^{1}$ inherits an action of $\mathcal{O}_{B^{op}}$
and carries a scalar action of $\mathbb{Z}_{p}$, hence has an action
$\mathcal{O}_{B^{op}}\otimes_{\mathbb{Z}}\mathbb{Z}_{p}=M_{2}(\mathcal{O}_{F_{p}})$.
Taking the image of $e_{2}$ say (to be consistent with \cite{Kas1}),
we obtain a sheaf $\omega=\omega_{\mathcal{A}^{PR}/\mathcal{X}^{PR}}$ which corresponds to 
\[
\chi(1,0,..,0,-1)\oplus...\oplus\chi(0,...,0,1,-1)
\]
on the generic fibre and still carries an action of $\mathcal{O}_{F_{p}}$. Decomposing
$\omega$ with respect to action of $\mathcal{O}_{F_{p}}$ as in the
Hilbert case, we obtain line bundles $\omega_{ij}$ corresponding to
$\chi(0,...,0,1,0,...,0,-1)$ on the generic fibre (the $1$ in the $(i,j)$-th place), and
automorphic forms of weight $(k_{11},...,k_{rd_{r}})$ are defined as global
sections of $\bigotimes\omega_{ij}^{k_{ij}}$. Note that these correspond
to our automorphic forms of weight $(k_{11},...,k_{rd_{r}},-\sum k_{ij})$,
or rather gives an integral structure to this space. We will see when
we consider Hecke operators that, the way we are used to thinking
about them, automorphic forms of weight $(k_{11},...,k_{rd_{r}})$ with
their usual Hecke action corresponds to global sections of $\left(\bigotimes\omega_{ij}^{k_{ij}-2}\right)\otimes\Omega_{X}^{d}$
(cf. \cite{ChFa} p. 258 for a similar remark in the Siegel case).

2) The central character is only important when we are considering Hecke
operators; the bundles $W(k_{11},...,k_{rd_{r}},w)$ are isomorphic for
fixed $(k_{11},...,k_{rd_{r}})$ but varying $w$. Changing $w$ has the
effect of scaling Hecke operators, which we will see and use explicitly
later. Consequently, we will occasionally just refer to $(k_{11},...,k_{rd_{r}})$
as the weight and sometimes talk about ``an automorphic form of weight
$(k_{11},...,k_{rd_{r}})$'', not specifying $w$, which we will refer
to as ``the central character''. Sometimes we will include $w$
in the weight. We hope that this will not be confusing.

3) As the $W(k_{11},...,k_{rd_{r}},w)$ are isomorphic for fixed $(k_{11},...,k_{rd_{r}})$
and varying $w$ by a canonical isomorphism (see Proposition \ref{pro: scaling hecke})
we may use this isomorphism to define an integral structure on $W(k_{11},...,k_{rd_{r}},w)$
by transport of structure from $W(k_{11},...,k_{rd_{r}},-\sum k_{ij})$.
\end{rem}

\subsection{Ordinary locus, canonical subgroups and overconvergent automorphic
forms}

The Hasse invariant is defined as a section of $\left(\wedge^{2d}e^{\ast}\Omega_{A^{DP}_{\mathbb{F}_{p}}/X^{DP}_{\mathbb{F}_p}}^{1}\right)^{\otimes(p-1)}$
(and can be defined more generally in this fashion for abelian
schemes over arbitrary characteristic $p$ bases) on $X_{\mathbb{F}_{p}}^{DP}$. The ordinary
locus $X_{\mathbb{F}_{p}}^{ord,DP}$ is the locus where the Hasse invariant
does not vanish; its vanishing locus will be denoted $X_{\mathbb{F}_{p}}^{ss,DP}$
(though it is not the supersingular locus except in some low dimensional
cases, we hope this will not cause any confusion). $X_{\mathbb{F}_{p}}^{ord,DP}$
is dense in $X_{\mathbb{F}_{p}}^{DP}$ (as $X_{\mathbb{F}_{p}}^{DP}$ is irreducible and $X_{\mathbb{F}_{p}}^{ord,DP}$ is open). Moreover $X_{\mathbb{F}_{p}}^{ord,DP}$ is smooth (see Remark \ref{rem : local geometry}). The Hodge bundle is ample (see e.g. \cite{LS} proof of Prop. 7.8) and hence $X_{\mathbb{F}_{p}}^{ord,DP}$
is affine (it is the complement of the vanishing locus of a nonzero
section of an ample line bundle on a projective variety). We may make the same definitions for $X_{k_{L}}^{PR}$, giving us $X^{ord,PR}_{k_{L}}$, an open dense affine subset of $X^{PR}_{k_{L}}$, with complement $X^{ss,PR}_{k_{L}}$. Since the map $X^{PR}_{k_{L}}\rightarrow X^{DP}_{k_{L}}$ is an isomorphism on the Rapoport locus, $X^{ord,PR}_{k_{L}}$ is isomorphic to $X^{ord,DP}_{k_{L}}$ and from now on we will drop the superscripts $^{PR}$ or $^{DP}$ from the ordinary locus.

Ultimately we will be interested in rigid-analytic phenomena. When
we have a scheme $S/\mathbb{Q}_{p}$ (or over any extension of complete valued
fields) we will let $S_{an}$ denote its Tate analytification, and
whenever we have an scheme $\mathcal{S}/\mathbb{Z}_{p}$ (or over
any extension of complete valuation rings) we will let $S_{rig}$ denote the
Raynaud generic fibre of the formal completion of $\mathcal{S}$ along its
special fibre. $S_{rig}$ carries a specialization map $sp\,:\, S_{rig}\rightarrow S_{\mathbb{F}_{p}}$.
When $S$ is the generic fiber of $\mathcal{S}$ there is always an
open immersion $S_{rig}\rightarrow S_{an}$ which is an isomorphism
when $\mathcal{S}$ is proper. These notions apply to $X$, $Y$ and
$Z$ and their integral models when they exist. Inside $X_{an}=X_{rig}$,
with respect to $sp\,:\, X_{rig}\rightarrow X_{\mathbb{F}_{p}}$,
we define $X_{rig}^{ord}=sp^{-1}(X_{\mathbb{F}_{p}}^{ord})$ and $X_{rig}^{ss}=sp^{-1}(X_{\mathbb{F}_{p}}^{ss,DP})$,
the ordinary locus resp. non-ordinary locus in $X_{rig}$. Note that we could also have defined them using the Pappas-Rapoport model, but the result would be the same (after base change to $L$).

Let us briefly recall some well known facts about canonical subgroups. By Proposition 3.4 of \cite{AnGa} applied to the formal completion along the special fibers of $\mathcal{A}^{DP}\rightarrow \mathcal{X}^{DP}$  we obtain a partial section $X^{ord}_{rig}\rightarrow Y^{rig}$, $A\mapsto (A,C_{A})$ of the natural map $Y_{rig}\rightarrow X_{rig}$. $C_{A}$ is called the canonical subgroup of $A$. The image of this morphism will be denote by $Y^{ord}_{rig}$. By Theorem 3.5 of \cite{AnGa}, the canonical subgroup overconverges to give a partial section $V\rightarrow Y_{rig}$ of $Y_{rig}\rightarrow X_{rig}$, where $V$ is some strict neighbourhood of $X^{ord}_{rig}$ in $X_{rig}$.

\begin{rem}
$Y_{rig}^{ord}$ is not the full ordinary locus in $Y$; it
is the so-called ordinary-multiplicative locus. There are several ordinary loci in $Y^{rig}$. Somewhat ad
hoc, we will define
\[
Y_{ord}=\{(A,H)\in Y_{rig}\mid A\in X^{ord}_{rig}\,,\,H\cap C_{A}=0\}
\]
$Y_{ord}$ will only be used in an auxiliary role in the construction of the $U_{p}$-operator.
\end{rem}

Next we will define $p$-adic and overconvergent automorphic forms.
We will abuse notation and use $W(k_{11},...,k_{rd_{r}},w)$ etc. to denote
the analytification of those sheaves on $X_{rig}$ etc.

\begin{defn}
An element of $H^{0}(X_{rig}^{ord},W(k_{11},...,k_{rd_{r}},w)$) is called a $p$-adic automorphic form of weight $(k_{11},...,k_{rd_{r}},w)$. An overconvergent automorphic form of weight $(k_{11},...,k_{rd_{r}},w)$ is an element of
\[
H^{0,\dagger}(X_{rig}^{ord},W(k_{11},...,k_{rd_{r}},w))=\lim_{\rightarrow}H^{0}(V,W(k_{11},...,k_{rd_{r}},w))
\]
where the direct limit is taken over any cofinal set of strict neighbourhoods
of $X_{rig}^{ord}$ in $X_{rig}$. Note that by restriction we have
an inclusion
\[
H^{0,\dagger}(X_{rig}^{ord},W(k_{11},...,k_{rd_{r}},w))\subseteq H^{0}(X_{rig}^{ord},W(k_{11},...,k_{rd_{r}},w))
\]
\end{defn}

\subsection{\label{sub:U_p}Hecke operators and $U_{p}$}

We define Hecke operators for our Shimura varieties as in \cite{Kott}
section 6. For us a special role is played by the Hecke operator $U_{p}$,
defined adelically on $Y$ by the double coset 
\[
K^{p}K_{0}(p)\left(\begin{array}{cc}
p\\
 & 1
\end{array}\right)K^{p}K_{0}(p)
\]
 or moduli theoretically by the correspondence
\[
(p_{1},p_{2})\,:\, Z\rightarrow Y\times Y
\]
where $p_{1}$ and $p_{2}$ are the two maps given by
\[
p_{1}(A,H_{1},H_{2})=(A/H_{2},A[p]/H_{2})
\]
\[
p_{2}(A,H_{1},H_{2})=(A,H_{1})
\]

One also has the diamond operators $\left\langle d\right\rangle \,:\, X\rightarrow X$
for $d\in\mathbb{Z}$ with $d$ suitably coprime to $K$ (we will
only need the case $d=p$) defined by $\left\langle d\right\rangle (A)=A/A[d]$.
Note that $A$ and $A/A[d]$ are isomorphic as abelian varieties.

From now on, in this section only, we will only work in the rigid
analytic setting and therefore drop the ``$rig$'' from the notation
in order to ease it. We wish to define operators on $p$-adic and
overconvergent automorphic forms and so want to know that the $U_{p}$-correspondence
restricts to $Y^{ord}$. Let $Z^{ord}=p_{2}^{-1}(Y^{ord})$.

\begin{lem}
$p_{1}(Z^{ord})\subseteq Y^{ord}$\end{lem}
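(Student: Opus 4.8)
The statement is that the first projection $p_1$ of the $U_p$-correspondence sends the ordinary locus $Z^{ord} = p_2^{-1}(Y^{ord})$ into $Y^{ord}$. Unwinding definitions, a point of $Z^{ord}$ is a triple $(A, H_1, H_2)$ with $A \in X^{ord}_{rig}$ and $H_1 \cap C_A = 0$ (the condition defining $Y^{ord}$, i.e. $(A,H_1)$ lies in $Y^{ord}_{rig}$, the ordinary-multiplicative locus). We also have the standing requirement $H_1 \cap H_2 = 0$ from the moduli description of $Z$. We must show that $p_1(A,H_1,H_2) = (A/H_2, A[p]/H_2)$ again lies in $Y^{ord}_{rig}$; that is, writing $A' = A/H_2$, we must check that $A'$ is ordinary and that the subgroup $H' := A[p]/H_2 \subseteq A'$ satisfies $H' \cap C_{A'} = 0$.

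\textbf{Key steps.} First I would observe that the isogeny $A \to A' = A/H_2$ has étale kernel $H_2$ (as $H_2$ is an étale $\mathcal{O}_{B^{op}}$-stable subgroup of rank $p^d$), so $A'$ has the same $p$-divisible group up to prime-to-$p$ adjustment on the generic fibre; more to the point, ordinariness is preserved under isogeny, so $A' \in X^{ord}_{rig}$, and the canonical subgroup $C_{A'}$ is well defined. Second, and this is the heart of the matter, I would identify $C_{A'}$ in terms of the data on $A$. The canonical subgroup $C_A$ is, on the ordinary locus, the multiplicative part of $A[p]$ (the kernel of the reduction of Frobenius, dually the connected part), characterized by the fact that $A[p]/C_A$ is étale. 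Since $H_2$ is étale, it injects into the étale quotient $A[p]/C_A$, so $H_2 \cap C_A = 0$ and the image of $C_A$ in $A' = A/H_2$ is a subgroup of $A'[p]$ of rank $p^d$, isotropic and $\mathcal{O}_{B^{op}}$-stable, which is multiplicative (being a quotient of the multiplicative $C_A$ by the trivial intersection). Its quotient $A'[p]/\overline{C_A}$ — one checks $A'[p] = (A[p] + H_2\text{-preimage})/H_2$ and a rank count gives $A'[p] \cong A[p]/(H_2 \cap A[p])$ up to the relevant identifications — is étale, so by the characterization of the canonical subgroup on the ordinary locus, $C_{A'}$ is exactly the image of $C_A$. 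Third, with this identification, $H' = A[p]/H_2$ and $C_{A'} = (C_A + H_2)/H_2$, so $H' \cap C_{A'} = 0$ in $A'$ is equivalent to $A[p]$ meeting $C_A + H_2$ only inside $H_2$, i.e. to $C_A \cap (H_1 + H_2)$... here I must be careful: $H' = A[p]/H_2$ is all of the $p$-torsion quotient, rank $p^{2d}$, not rank $p^d$. Let me reconsider: since $\#A[p] = p^{4d}$ and $\#H_2 = p^d$, the group $A[p]/H_2$ has order $p^{3d}$, which is too big to be a valid point of $Y$. The correct reading must be that $A[p]/H_2$ denotes the image in $A' = A/H_2$ of $A[p]$, which has order $p^{3d}$ — so in fact $p_1(A,H_1,H_2) = (A/H_2, A[p]/H_2)$ should have $A[p]/H_2$ of rank $p^d$ inside $A'[p]$; the resolution is that $H_2 \subseteq A[p]$, so $A[p]/H_2 \subseteq A'[p] = A[p^2]/H_2$ has order $p^{3d}$... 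This indicates I should instead interpret the $U_p$ correspondence in its standard normalization where the rank-$p^d$ subgroup of $A'$ is $(A[p] \cap (\text{something}))/H_2$ or rather that one takes a rank-$p^d$ piece; regardless, the subgroup in question is built from $A[p]$ modulo $H_2$, and the verification reduces to the étale-vs-multiplicative dichotomy.

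\textbf{Main obstacle.} The essential difficulty — and the step I expect to occupy the real content of the argument — is the precise identification of $C_{A'}$ in terms of $C_A$ and $H_2$, together with the ensuing intersection-theoretic check that $H' \cap C_{A'} = 0$. Because $C_A$ on the ordinary locus is genuinely the multiplicative subgroup (equivalently, the kernel of Frobenius in the reduction, which reduces to $\ker F$ on the special fibre), the whole computation can be pushed to the special fibre via the specialization map: there $A_{\mathbb{F}_p}$ is ordinary, $C_A$ reduces to $\ker F$, $H_2$ reduces to an étale subgroup, and one checks the quotient-and-intersection assertion on Dieudonné modules, where it becomes a transparent linear-algebra statement about the slope decomposition ($\ker F$ = slope-$0$ part on the covariant side / slope-$1$ part etc.). Once the special-fibre statement is in hand, the rigid statement follows because ordinariness and the vanishing of an intersection of finite flat subgroups are detected on the reduction (the subgroups being étale over the ordinary locus). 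So concretely: (i) reduce mod $p$; (ii) use ordinariness to write everything in Dieudonné-module / slope terms; (iii) verify that quotienting $A[p]$ by the étale $H_2$ and comparing with $\ker F$ on $A/H_2$ gives trivial intersection — this is the crux; (iv) lift back to characteristic $0$ using that $sp^{-1}$ of the ordinary locus is $X^{ord}_{rig}$ and that $p_1$ is compatible with specialization. I do not expect steps (i), (ii), (iv) to require more than routine care; step (iii) is where the $\mathrm{GL}_2$-nature of the moduli problem and the precise definition of $Y^{ord}_{rig}$ as the ordinary-multiplicative locus are used.
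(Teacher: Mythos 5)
There is a genuine gap here, and it is definitional rather than technical: you have taken the wrong membership condition for $Y^{ord}$. In the paper, $Y_{rig}^{ord}$ is the image of the canonical-subgroup section $A\mapsto (A,C_{A})$, i.e.\ the ordinary-multiplicative locus where $H=C_{A}$; the condition $H\cap C_{A}=0$ that you use is the definition of the different, auxiliary locus $Y_{ord}$ (subscript), which the paper introduces separately. With your reading, the hypothesis on $(A,H_{1},H_{2})\in Z^{ord}$ is only $H_{1}\cap C_{A}=0$ together with $H_{1}\cap H_{2}=0$, which leaves $H_{2}$ unconstrained relative to $C_{A}$, and the conclusion you then set out to prove, $(A[p]/H_{2})\cap C_{A/H_{2}}=0$, is false in general. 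Your own ``key step'' shows this: when $H_{2}\cap C_{A}=0$, the image of $C_{A}$ in $A/H_{2}$ \emph{is} $C_{A/H_{2}}$, and since $C_{A}+H_{2}\subseteq A[p]$ this canonical subgroup is contained in $A[p]/H_{2}$, so the intersection is all of $C_{A/H_{2}}$ rather than zero. The rank puzzle you run into ($A[p]/H_{2}$ looking ``too big'') is a symptom of the same misreading combined with the paper's Morita-equivalence bookkeeping (the rank-$p^{d}$ subgroups and the identity $A[p]/H_{2}=C_{A/H_{2}}$ are to be read at the level of the height-$2d$ object $\mathcal{G}_{A}$, equivalently as $\mathcal{O}_{B^{op}}$-stable subgroups of $A[p]$), not of a wrong normalization of the $U_{p}$-correspondence.

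With the correct definition the lemma is essentially immediate, and this is the paper's proof: $(A,H_{1})\in Y^{ord}$ forces $H_{1}=C_{A}$; the moduli condition $H_{1}\cap H_{2}=0$ then gives $H_{2}\cap C_{A}=0$, so $H_{2}$ injects into the \'etale quotient of $A[p]$, the quotient $A/H_{2}$ is ordinary, and $A[p]/H_{2}$ is (the image of the multiplicative part, hence) the canonical subgroup $C_{A/H_{2}}$; thus $p_{1}(A,H_{1},H_{2})=(A/H_{2},A[p]/H_{2})\in Y^{ord}$. The identification of the canonical subgroup of the quotient --- the step you single out as the heart of the matter, and which one can indeed check on the special fibre as you suggest, though this is heavier machinery than needed --- is the right computation; the error is that you then feed it into the wrong membership condition, so the argument as written addresses a different statement from the lemma and, as a proof of the lemma itself, does not go through.
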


\begin{proof}
Let $(A,H_{1},H_{2})\in Z^{ord}$. By definition $(A,H_{1})\in Y^{ord}$,
so  $H_{1}=C_{A}$. Therefore $A[p]/H_{2}=C_{A/H_{2}}$, hence $p_{1}(A,H_{1},H_{2})=(A/H_{2},A[p]/H_{2})\in Y^{ord}$.
\end{proof}

We may therefore restrict to get a correspondence 
\[
(p_{1},p_{2})\,:\, Z^{ord}\rightarrow Y^{ord}\times Y^{ord}
\]
 Using the isomorphism $X^{ord}\cong Y^{ord}$ we may view this as
a correspondence on $X^{ord}$, and we may simplify $Z^{ord}$ by
noting that the forgetful map $Z\rightarrow Y$ given by $(A,H_{1},H_{2})\mapsto(A,H_{2})$
identifies $Z^{ord}$ with $Y_{ord}=\left\{ (A,H)\mid A\in X^{ord}\,,\, H\cap C_{A}=0\right\} $
in $Y$, so we get a $U_{p}$-correspondence 
\[
(p_{1},p_{2})\,:\, Y_{ord}\rightarrow X^{ord}\times X^{ord}
\]
 with 
\[
p_{1}(A,H)=A/H
\]
\[
p_{2}(A,H)=A
\]

Next we wish to define another $U_{p}$-correspondence, call it $\tilde{U}_{p}$,
which will turn out to be isomorphic to $U_{p}$. We have a map $Fr\,:\, X^{ord}\rightarrow X^{ord}$
given by $Fr(A)=A/C_{A}$. We denote it $Fr$ because it is a lift
of the relative Frobenius in the sense that 
\[
\xymatrix{X^{ord}\ar[r]^{Fr}\ar[d]^{sp} & X^{ord}\ar[d]^{sp}\\
X_{\mathbb{F}_{p}}^{ord}\ar[r]^{Fr} & X_{\mathbb{F}_{p}}^{ord}
}
\]
 commutes. This will be important when we consider rigid cohomology
later. We define $\tilde{U}_{p}$ as the correspondence
\[
(q_{1},q_{2})\,:\, X^{ord}\rightarrow X^{ord}\times X^{ord}
\]
 where 
\[
q_{1}=id
\]
\[
q_{2}=\left\langle p\right\rangle ^{-1}Fr
\]
 
\begin{lem}
Define two morphisms $\alpha\,:\, X^{ord}\rightarrow Y_{ord}$ and
$\beta\,:\, Y_{ord}\rightarrow X^{ord}$ by
\[
\alpha(A)=(A/C_{A},A[p]/C_{A})
\]
\[
\beta(A,H)=A/H
\]
Furthermore, define an automorphism $\left\langle p\right\rangle _{Y}\,:\, Y_{ord}\rightarrow Y_{ord}$
by 
\[
\left\langle p\right\rangle _{Y}(A,H)=\left(\left\langle p\right\rangle (A),\frac{\left\{ a\in A\mid pa\in H\right\} }{A[p]}\right)
\]
Then $\beta\alpha=\left\langle p\right\rangle $ and $\alpha\beta=\left\langle p\right\rangle _{Y}$,
so $\beta$ defines an isomorphism $Y_{ord}\cong X^{ord}$. \end{lem}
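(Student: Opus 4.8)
The plan is to verify the two composite identities $\beta\alpha = \langle p\rangle$ and $\alpha\beta = \langle p\rangle_Y$ directly on the level of abelian varieties with their extra structures, and then deduce that $\beta$ is an isomorphism since $\langle p\rangle$ and $\langle p\rangle_Y$ are automorphisms (the latter because $\langle p\rangle$ is, and the formula for $\langle p\rangle_Y$ is visibly invertible, with inverse built from $\langle p\rangle^{-1}$ and the preimage-of-$H$ construction). The key input is the canonical subgroup theory recalled before the statement: on $X^{ord}$ the canonical subgroup $C_A$ exists, is $\mathcal{O}_{B^{op}}$-stable, isotropic, of rank $p^d$, killed by $p$, and behaves well under the isogenies in sight; in particular $A[p]/C_A = C_{A/C_A}$, which is exactly the fact used in the proof of the preceding lemma that $p_1(Z^{ord})\subseteq Y^{ord}$.

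First I would compute $\beta\alpha(A)$. We have $\alpha(A) = (A/C_A, A[p]/C_A) \in Y_{ord}$ — here one must check this really lands in $Y_{ord}$, i.e. that $A[p]/C_A$ meets the canonical subgroup of $A/C_A$ trivially; but $C_{A/C_A} = A[p]/C_A$ as just noted, so actually $A[p]/C_A$ \emph{equals} that canonical subgroup rather than being complementary to it — so I need to be slightly careful about which "ordinary locus'' in $Y$ is meant. Re-reading the setup, $\alpha$ is stated to map into $Y_{ord} = \{(A,H)\mid A\in X^{ord},\ H\cap C_A = 0\}$, and the point $(A/C_A, A[p]/C_A)$ does have underlying $A/C_A\in X^{ord}$, while $A[p]/C_A$ is a subgroup of rank $p^d$ killed by $p$ that intersects $C_{A/C_A}$... so in fact the intended reading must be that $\alpha(A) = (A/C_A, \text{image of } A[p])$ where the image of $A[p]$ under the quotient map is $A[p]/C_A$, and one checks $(A[p]/C_A)\cap C_{A/C_A}$: since both have rank $p^d$ and $C_{A/C_A}=A[p]/C_A$ they coincide — so to make the statement consistent I should instead interpret the second component via the complement, i.e. $\alpha(A)=(A/C_A, H')$ where $H'$ is the image in $A/C_A$ of a subgroup complementary to $C_A$ in $A[p]$. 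In any case, the content of the verification is: apply $\beta$, which quotients the first component by the second component, yielding $(A/C_A)/(A[p]/C_A) \cong A/A[p] = \langle p\rangle(A)$ (using $A\cong A/A[p]$ via multiplication by $p$, and that this identification respects $\iota,\phi,\eta$ after the diamond-operator twist). That gives $\beta\alpha = \langle p\rangle$.

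Next, $\alpha\beta(A,H)$: we have $\beta(A,H) = A/H$, which lies in $X^{ord}$ by the previous lemma's argument applied to $Y_{ord}\cong Z^{ord}$ (since $H\cap C_A = 0$, the canonical subgroup of $A/H$ is the image $C_{A/H} = (C_A + H)/H \cong C_A$). Then $\alpha(A/H) = ((A/H)/C_{A/H},\ (A/H)[p]/C_{A/H})$. Now $(A/H)/C_{A/H} = A/(C_A\oplus H) = A/A[p] = \langle p\rangle(A)$ (as $C_A$ and $H$ are complementary in $A[p]$, both being rank $p^d$, isotropic, $\mathcal{O}_{B^{op}}$-stable, and $C_A\cap H = 0$). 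For the second component one identifies $(A/H)[p]/C_{A/H}$ inside $\langle p\rangle(A)$ with $\{a\in A : pa\in H\}/A[p]$ — this is a direct diagram chase: an element of $(A/H)[p]$ is represented by some $a\in A$ with $pa\in H$, and reducing modulo $C_{A/H}$ and using the identification $(A/H)/C_{A/H}=A/A[p]$ gives precisely the class of $a$ in $\{a : pa\in H\}/A[p]$. Hence $\alpha\beta = \langle p\rangle_Y$.

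Finally, since $\langle p\rangle : X^{ord}\to X^{ord}$ is an automorphism (its inverse is $\langle p^{-1}\rangle = \langle p\rangle^{-1}$, or directly $A\mapsto A/A[p]$ again since $\langle p\rangle^2 = \langle p^2\rangle$ and one can also just invoke that diamond operators are invertible), and likewise $\langle p\rangle_Y$ is an automorphism of $Y_{ord}$, the relations $\beta\alpha = \langle p\rangle$ and $\alpha\beta = \langle p\rangle_Y$ force $\alpha$ and $\beta$ to be mutually inverse up to these automorphisms; concretely $\beta$ is surjective (it has a section $\langle p\rangle^{-1}\alpha$ on the right... wait, $\beta\cdot(\alpha\langle p\rangle^{-1}) = \langle p\rangle\langle p\rangle^{-1} = \mathrm{id}$ after checking $\langle p\rangle$ commutes appropriately) and injective (from $\alpha\beta = \langle p\rangle_Y$ an automorphism), so $\beta$ is an isomorphism $Y_{ord}\xrightarrow{\sim} X^{ord}$. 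The main obstacle I anticipate is purely bookkeeping: tracking how the polarization data $\phi$, the $\mathcal{O}_{B^{op}}$-action $\iota$, and the level structure $\eta$ transport through the two isogenies and how the diamond operator $\langle p\rangle$ (which rescales the polarization ideal $\mathfrak{c}$ by $p$) must be inserted to make everything land in the correct moduli space — none of this is deep, but it requires care to state cleanly. The underlying group-scheme identities ($C_A\oplus H = A[p]$, $C_{A/C_A} = A[p]/C_A$, $C_{A/H} = (C_A+H)/H$) all follow from the recalled properties of the canonical subgroup and a rank count.
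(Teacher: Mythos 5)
Your two composite computations coincide with the paper's proof: $\beta\alpha(A)=(A/C_{A})/(A[p]/C_{A})=A/A[p]=\langle p\rangle A$, and for $\alpha\beta$ one uses $C_{A/H}=(C_{A}+H)/H=A[p]/H$ (valid precisely because $H\cap C_{A}=0$) together with $(A/H)[p]=\{a\in A\mid pa\in H\}/H$, so that the second component of $\alpha(A/H)$ is $\{a\in A\mid pa\in H\}/A[p]$, as required; the deduction that $\beta$ is an isomorphism from the two identities is also fine.

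However, the middle of your argument contains a genuine error. The identity you attribute to the preceding lemma, $C_{A/C_{A}}=A[p]/C_{A}$, is false: that lemma's fact is $C_{A/H}=A[p]/H$ for a subgroup $H$ with $H\cap C_{A}=0$, i.e.\ for quotients by an \emph{anti-canonical} subgroup. Quotienting by the canonical subgroup itself is the opposite situation: over the ordinary locus $C_{A}$ is the connected (multiplicative) part of $A[p]$, so $A[p]/C_{A}$ is the \'etale quotient, and an \'etale finite flat subgroup meets the connected canonical subgroup $C_{A/C_{A}}$ trivially. Hence $(A/C_{A},\,A[p]/C_{A})$ does lie in $Y_{ord}=\{(A,H)\mid H\cap C_{A}=0\}$ exactly as written, and there is no inconsistency to repair. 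Your proposed repair --- replacing the second component by the image of a chosen complement of $C_{A}$ in $A[p]$ --- is both unnecessary and problematic: no canonical complement exists over the whole ordinary locus, so this would not define a morphism, and in any case the image of any complement $D$ would again be $(D+C_{A})/C_{A}=A[p]/C_{A}$, so your ``fix'' collapses back to the original definition without your noticing. The missing step is simply the observation that $A[p]/C_{A}$ is the anti-canonical (\'etale) subgroup of $(A/C_{A})[p]$, not the canonical one; with that in place, your verifications of $\beta\alpha=\langle p\rangle$ and $\alpha\beta=\langle p\rangle_{Y}$ go through exactly as in the paper.
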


\begin{proof}
We have (equalities as points in the moduli spaces)
\[
\beta\alpha(A)=\beta(A/C_{A},A[p]/C_{A})=\frac{A/C_{A}}{A[p]/C_{A}}=\frac{A}{A[p]}=\left\langle p\right\rangle A
\]
 and 
\[
\alpha\beta(A,H)=\alpha(A/H)=\left(\frac{A/H}{A[p]/H},\frac{\left\{ a\in A\mid pa\in H\right\} /H}{A[p]/H}\right)=\left\langle p\right\rangle _{Y}(A,H)
\]
 where the last equality comes from noting that $A[p]/H$ is the canonical
subgroup in $A/H$ and that the map $A/A[p]\rightarrow A$
induced by the $p$-power map on $A$ sends $\frac{\left\{ a\in A\mid pa\in H\right\} }{A[p]}$
to $H$.
\end{proof}

Finally we may prove

\begin{prop}
$U_{p}\cong\tilde{U}_{p}$\end{prop}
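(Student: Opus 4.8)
The plan is to identify both $U_p$ and $\tilde U_p$ with a common correspondence, namely the one on $Y_{ord}$ studied in the preceding lemmas, by unwinding the moduli-theoretic descriptions. Recall that after the reductions in this section the $U_p$-correspondence has been rewritten as $(p_1,p_2)\colon Y_{ord}\rightarrow X^{ord}\times X^{ord}$ with $p_1(A,H)=A/H$ and $p_2(A,H)=A$, while $\tilde U_p$ is the correspondence $(q_1,q_2)\colon X^{ord}\rightarrow X^{ord}\times X^{ord}$ with $q_1=\mathrm{id}$ and $q_2=\langle p\rangle^{-1}Fr$. The morphism $\beta\colon Y_{ord}\rightarrow X^{ord}$, $\beta(A,H)=A/H$, was shown in the previous lemma to be an isomorphism, with inverse induced by $\alpha(A)=(A/C_A,A[p]/C_A)$ (using $\beta\alpha=\langle p\rangle$, $\alpha\beta=\langle p\rangle_Y$). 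So the natural thing is to check that $\beta$ intertwines the two correspondences.

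Concretely, I would verify the two identities $q_1\circ\beta = p_1$ and $q_2\circ\beta = p_2$ (equivalently, pulling back along $\alpha$, that $p_1\circ\alpha = q_1$ and $p_2\circ\alpha = \langle p\rangle\circ q_2$, up to the discrepancy $\beta\alpha=\langle p\rangle$ which must be bookkept carefully). For the first: $p_1\circ\alpha(A) = p_1(A/C_A,A[p]/C_A) = (A/C_A)/(A[p]/C_A) = A/A[p] = \langle p\rangle A$, whereas $q_1(A)=A$; since $A$ and $\langle p\rangle A$ are canonically isomorphic as abelian varieties (with all structures), this matches after composing with $\beta\alpha=\langle p\rangle$ on the source. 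For the second: $p_2\circ\alpha(A) = A/C_A = Fr(A)$, and we want this to equal $\langle p\rangle\circ q_2(A) = \langle p\rangle\langle p\rangle^{-1}Fr(A) = Fr(A)$, which is exactly right. Thus under the isomorphism $\beta\colon Y_{ord}\xrightarrow{\sim} X^{ord}$ the correspondence $(p_1,p_2)$ goes over to $(q_1,q_2)$ precomposed with the automorphism $\langle p\rangle$, which does not change the induced correspondence (an isomorphism of the source of a correspondence does not change the correspondence up to isomorphism). Hence $U_p\cong\tilde U_p$.

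The routine part is the moduli-theoretic bookkeeping; the only genuinely delicate point is keeping track of the $\langle p\rangle$-twists and the fact that a Hecke correspondence is only well-defined up to isomorphism of its source variety, so that the identity "$\beta\alpha = \langle p\rangle$ rather than $\mathrm{id}$" is harmless. I expect the main obstacle, if any, to be making sure that all the auxiliary structures (the $\mathcal{O}_{B^{op}}$-action, the polarization data $\phi$ and its $\mathfrak c$-scaling, and the level-$N$ structure $\eta$) are carried along correctly through the quotients $A\mapsto A/C_A$, $A\mapsto A/H$ and $A\mapsto\langle p\rangle A$ — in particular that the canonical subgroup $C_A$ is $\mathcal{O}_{B^{op}}$-stable and isotropic so that $A/C_A$ is again a point of $X^{ord}$ with its induced polarization, and that $A[p]/C_A$ is indeed the canonical subgroup of $A/C_A$ (this last fact was already used in the proof that $p_1(Z^{ord})\subseteq Y^{ord}$). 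Once these compatibilities are granted, the proof is the two short computations above.
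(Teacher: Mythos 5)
Your argument is correct and is essentially the paper's: both rest on the isomorphism $Y_{ord}\cong X^{ord}$ from the preceding lemma together with a pointwise moduli computation whose only real input is that $A[p]/H$ (resp.\ $A[p]/C_{A}$) is the canonical subgroup of $A/H$ (resp.\ of $A/C_{A}$). The paper's bookkeeping is slightly cleaner: it checks $q_{1}\beta=p_{1}$ and $q_{2}\beta=p_{2}$ directly on points $(A,H)$ of $Y_{ord}$, so the two correspondences agree on the nose and no $\langle p\rangle$-twist ever appears, whereas computing through $\alpha$ as you do forces you to carry the discrepancy $\beta\alpha=\langle p\rangle$. Your way of discharging that twist is legitimate, but two small points deserve care: the parenthetical claim that $A$ and $\langle p\rangle A$ are isomorphic ``with all structures'' is false in general (the induced level structure gets multiplied by $p$, and $\langle p\rangle$ is a genuinely nontrivial automorphism of the moduli problem — if it were trivial there would be nothing to bookkeep), though your argument does not actually use it; and identifying the discrepancy as precomposition of $(q_{1},q_{2})$ by the automorphism $\langle p\rangle$ tacitly uses that $Fr$ commutes with $\langle p\rangle$ — true, and invoked later in the paper, but it should be stated, or avoided altogether by working with $\beta$ as the paper does.
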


\begin{proof}
By the lemma we know that $X^{ord}\cong Y_{ord}$ via $\beta$, so
it suffices to prove that $q_{1}\beta=p_{1}$ and $q_{2}\beta=p_{2}$.
Now 
\[
q_{1}\beta(A,H)=q_{1}(A/H)=A/H=p_{1}(A,H)
\]
 and 
\[
q_{2}\beta(A,H)=q_{2}(A/H)=\left\langle p\right\rangle ^{-1}\left(\frac{A/H}{A[p]/H}\right)=\left\langle p\right\rangle ^{-1}\left(\frac{A}{A[p]}\right)=A=p_{2}(A,H)
\]
\end{proof}

We may therefore denote both correspondences by $U_{p}$. The description
in terms of $Fr$ will prove useful in order to study the slopes of
$U_{p}$.

It remains to extend $U_{p}$ to (small) strict neighbourhoods of
$X^{ord}$. This can be done both from the more classical point of
view, see \cite{Pill} Prop. 4.8.5, or by the overconvergence of the canonical
subgroup. In fact it is well known that the $U_{p}$-correspondence contracts strict neighbourhoods of the ordinary locus. This may be deduced for example by following \cite{Pill} \S 1.2, defining the degree function on $Y_{rig}$ by pullback from $X(2d)$ (in the notation of \cite{Pill} \S 1.2; although the setup there is for the Siegel modular variety for principally polarized abelian varieties, the arguments go through without change for Siegel modular varieties with polarization type of degree prime to  $p$). Then, an argument as in the proof of Proposition 2.3.6 of \cite{Pill} proves the desired contraction property. The correspondences hence induce compact operators
on spaces of overconvergent automorphic forms.

\begin{rem}
\label{rem:factors of p in U_p }1) The Hecke correspondences away
from $p$ preserve the ordinary locus. Hence, again using Prop. 4.8.5
of \cite{Pill}, these correspondences overconverge and define operators
on overconvergent automorphic forms.

2) To properly let a correspondence $s=(s_{1},s_{2})$ act on automorphic
forms of weight $(k_{1},...,k_{d},w)$ one needs also to specify an
isomorphism $s_{1}^{\ast}W(k_{1},...,k_{d},w)\cong s_{2}^{\ast}W(k_{1},...,k_{d},w)$.
This is done in general by the theory of automorphic vector bundles.
To study $p$-divisibility of $U_{p}$, it is preferable though to
have some moduli-theoretic interpretation. It suffices to give such
an isomorphism for $\pi_{univ,\ast}\Omega_{A^{univ}/X}^{1}$ respecting
the action of $M_{2}(\mathcal{O}_{F_{p}})$, as all sheaves of automorphic
forms are constructed from this data, and so we may describe automorphic
forms as ``functions'' a la Katz defined on ``points'' $(A,\omega)$
with $\omega\in H^{0}(A,\Omega_{A}^{1})$. Thus, in order to describe
the action of $U_{p}$ on automorphic forms we need to, given $(A,\omega)$
and $B=A/H\in U_{p}(A)$, functorially associate some $\omega^{\prime}\in H^{0}(A/H,\Omega_{A/H}^{1})$.
This is done by inverting the pullback of differentials along the
isogeny $A\rightarrow A/H$. 

For our second description of $U_{p}$ we may first of all ignore
$\left\langle p\right\rangle ^{-1}$, as it only changes the level
structure away from $p$. The natural map involved is then (a priori)
the isogeny $B\rightarrow B/C_{B}=A$ and it would seem natural to
use pullback of differentials along this isogeny. These definitions
do not agree however, as the composition $B\rightarrow B/C_{B}=A\rightarrow A/H=B$
is multiplication by $p$ which induces multiplication by $p$ on
differentials, so the two definitions disagree by a factor of $p$.
As is standard, we choose the first definition, and modify the second
by the appropriate factor of $p$. This corresponds geometrically
to, rather than using $B\rightarrow A$, using its ``dual'' $A\rightarrow B$
(defined such that the composition both ways are multiplication by
$p$, and related to the dual isogeny via our polarizations). More
explicitly, one has $\tilde{U}_{p}=p^{-\sum k_{ij}}U_{p}$ on $H^{0,\dagger}(X_{rig}^{ord},W(k_{11},...,k_{rd_{r}},-\sum k_{ij}))$
at first, and then scale so that $\tilde{U}_{p}=U_{p}$. Note that
whereas the theory of automorphic vector bundles gives definitions
of Hecke operators for all weights $(k_{11},...,k_{rd_{r}},w)$, we make
this moduli-theoretic definition a priori only for weights of the
form $(k_{11},...,k_{rd_{r}},-\sum k_{ij})$. For general central characters
we scale appropriately to match the theory of automorphic vector bundles,
cf. Proposition \ref{pro: scaling hecke}.
\end{rem}

For the rest of the article we will let $\mathcal{H}_{K}$ denote
the full Hecke algebra of $G^{\star}(\mathbb{A}^{\infty})$ with respect
to the level $K$, and let $\mathcal{H}_{K}^{p}$ denote the full
Hecke algebra of $G^{\star}(\mathbb{A}^{p,\infty})$ with respect to $K^{p}$. Later on when
we consider eigenforms we will fix a commutative subalgebra $\mathcal{H}^{p}\subseteq\mathcal{H}_{K}^{p}$
(which is assumed to be full for primes $\ell\neq p$ for which $B$
is split and $K^{p}$ is maximal) and work with the (commutative)
subalgebra $\mathcal{H}=\mathcal{H}^{p}[U_{p},\left\langle p\right\rangle ]\subseteq\mathcal{H}_{K}$.

For future use we will define two other correspondences at $p$. The
first is the Frobenius correspondence (or really morphism) 
\[
Fr\,:\, X^{ord}\rightarrow X^{ord}\times X^{ord}
\]
with $Fr_{1}=Fr$ and $Fr_{2}=id$. The second is $T_{p}$: 
\[
T_{p}\,:\, Y\rightarrow X\times X
\]
defined by $(T_{p})_{1}(A,H)=A/H$, $(T_{p})_{2}(A,H)=A$. The analytification
of $T_{p}$ preserves the ordinary locus (as ordinariness is preserved
by isogenies) and hence we may restrict, obtaining a correspondence on $X^{ord}$.
As above both of these correspondences overconverge. Given $A\in X^{ord}$
and $\omega\in H^{0}(A,\Omega_{A}^{1})$, we have $Fr(A)=A/C_{A}$
and define a differential $\omega^{\prime}\in H^{0}(A/C_{A},\Omega_{A/C_{A}}^{1})$
by inverse pullback along $A\rightarrow A/C_{A}$. This makes $Fr$
act on automorphic forms by Remark \ref{rem:factors of p in U_p }.
For $T_{p}$ the same discussion as for $U_{p}$ in Remark \ref{rem:factors of p in U_p }
applies to give the action on automorphic forms. We remark that, as
correspondences, $T_{p}=U_{p}+Fr$ (see \cite{Lau1} section 1.6 for the
definition of addition of correspondences) and with the conventions
above $T_{p}$ and $U_{p}+Fr$ also induce the same actions on automorphic
vector bundles.

\subsection{BGG complexes for $G^{\star}$\label{sub:2.4 BGG}}

We wish to compute the BGG complex of the representation $Sym^{k-2d}(Sd)$,
for $k\geq2d$. For BGG complexes see \cite{BGG} for the original
paper and \cite{Hum} for a recent detailed study. For our purpose,
the theorem specialized to our situation is the following (the passage
from semisimple to reductive Lie algebras merely consists of adding
a central character): 

\begin{thm}
\label{thm:reductive BGG}(BGG resolution) If $V$ is the irreducible
representation of the reductive Lie algebra $\mathfrak{g}^{\star}=Lie(G^{\star}(\mathbb{C}))$
of dominant weight $\lambda=(k_{1},...,k_{d},w)$, then we have a
resolution 
\[
0\to C_{d}^{V}\to...\to C_{0}^{V}\to V\to0
\]

with $C_{r}^{V}=\bigoplus_{w\in W^{(r)}}U(\mathfrak{g}^{\star})\otimes_{U(\mathfrak{b}^{\star})}\chi(w(\lambda+\rho)-\rho)$.
The chain complex $C_{\bullet}^{V}$ is a quasi-isomorphic direct
summand of the bar resolution $D_{\bullet}^{V}$ defined by $D_{r}^{V}=U(\mathfrak{g}^{\star})\otimes_{U(\mathfrak{b}^{\star})}(\wedge^{r}(\mathfrak{g}^{\star}/\mathfrak{b}^{\star})\otimes_{\mathbb{C}}V$),
with $\mathfrak{b}^{\star}=Lie(B^{\star}(\mathbb{C}))$.
\end{thm}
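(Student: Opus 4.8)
The statement is the Bernstein--Gelfand--Gelfand resolution for the reductive Lie algebra $\mathfrak{g}^{\star}$, so the approach is to reduce it to the classical BGG resolution for the semisimple part and then bookkeep the central character. First I would recall the decomposition $\mathfrak{g}^{\star} = \mathfrak{z}^{\star} \oplus \mathfrak{g}^{der}$, where $\mathfrak{z}^{\star} = \mathrm{Lie}(Z^{\star}(\mathbb{C})) \cong \mathbb{C}$ is the center and $\mathfrak{g}^{der} = \mathrm{Lie}(G^{der}(\mathbb{C})) = \prod_{i,j}\mathfrak{sl}_{2}(\mathbb{C})$ is the (semisimple) derived algebra, with compatible decompositions $\mathfrak{b}^{\star} = \mathfrak{z}^{\star} \oplus \mathfrak{b}^{der}$ and $\mathfrak{g}^{\star}/\mathfrak{b}^{\star} \cong \mathfrak{g}^{der}/\mathfrak{b}^{der}$ as $\mathfrak{b}^{\star}$-modules (the center acting trivially on the quotient, as already noted in the paper for $\Omega^{1}_{X}$). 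Writing $\lambda = (k_{1},\ldots,k_{d},w)$, the irreducible $V$ is the outer tensor of the irreducible $\mathfrak{g}^{der}$-representation $V^{der}$ of highest weight $(k_{1},\ldots,k_{d})$ with the one-dimensional $\mathfrak{z}^{\star}$-representation given by $w$ (up to the $2$-divisibility convention absorbed into $\mathrm{det}$). Since everything factors through a product of $\mathfrak{sl}_{2}$'s, the Weyl group is $W = \prod_{i,j}\{1,s_{ij}\}$ and the length-$r$ subset $W^{(r)}$ consists of those elements that are a product of $r$ distinct reflections.

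Second, I would invoke the classical BGG resolution (\cite{BGG}, or \cite{Hum}) for the semisimple algebra $\mathfrak{g}^{der}$ with dominant integral highest weight $(k_{1},\ldots,k_{d})$: this gives an exact complex $0 \to C^{V^{der}}_{d} \to \cdots \to C^{V^{der}}_{0} \to V^{der} \to 0$ with $C^{V^{der}}_{r} = \bigoplus_{w\in W^{(r)}} U(\mathfrak{g}^{der})\otimes_{U(\mathfrak{b}^{der})}\chi^{der}(w(\mu+\rho)-\rho)$, $\mu = (k_{1},\ldots,k_{d})$, together with the standard fact that $C^{V^{der}}_{\bullet}$ is a direct summand of the Koszul/bar resolution $D^{V^{der}}_{r} = U(\mathfrak{g}^{der})\otimes_{U(\mathfrak{b}^{der})}(\wedge^{r}(\mathfrak{g}^{der}/\mathfrak{b}^{der})\otimes_{\mathbb{C}} V^{der})$ via the projection onto the isotypic piece of the right infinitesimal character (Harish-Chandra central character of $V^{der}$). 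Then I would tensor the whole picture with the one-dimensional central character $w$: using the PBW identification $U(\mathfrak{g}^{\star}) \cong U(\mathfrak{z}^{\star})\otimes_{\mathbb{C}} U(\mathfrak{g}^{der})$ and the analogous one for $\mathfrak{b}^{\star}$, tensoring an induced module $U(\mathfrak{g}^{der})\otimes_{U(\mathfrak{b}^{der})}(-)$ over $\mathbb{C}$ with the central line and inducing trivially from $\mathfrak{z}^{\star}$ simply shifts all the $\mathfrak{b}^{\star}$-weights by $w$ in the central coordinate; this turns $\chi^{der}(w(\mu+\rho)-\rho)$ into $\chi(w(\lambda+\rho)-\rho)$ (the $\rho$-shift is trivial on the center) and $V^{der}$ into $V$. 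Exactness is preserved because tensoring a complex of $\mathbb{C}$-vector spaces with a one-dimensional space is exact, and the direct-summand property is preserved because the splitting maps can be tensored with the identity on the central line. This yields the asserted resolution $0 \to C^{V}_{d} \to \cdots \to C^{V}_{0} \to V \to 0$ and identifies $C^{V}_{\bullet}$ as a quasi-isomorphic direct summand of $D^{V}_{\bullet}$, observing $\mathfrak{g}^{\star}/\mathfrak{b}^{\star} \cong \mathfrak{g}^{der}/\mathfrak{b}^{der}$ so the two bar resolutions match up.

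Third, a small compatibility check: I should verify that the central-character convention used to parametrize $\chi(k_{11},\ldots,k_{rd_{r}},w)$ with $w \equiv \sum k_{ij} \bmod 2$ is consistent with the $\mathrm{det}^{(w-\sum k_{ij})/2}$ twist in the earlier Corollary, and that the weights $w(\lambda+\rho)-\rho$ appearing at stage $r$ indeed all have the same central coordinate $w$ (so they are genuine $T^{\star}$-characters); this is immediate since $W$ acts only on the $\mathfrak{g}^{der}$-part. I would also note that the hypothesis in the surrounding discussion is $V = Sym^{k-2d}(Sd)$ restricted to the relevant summand, i.e. a specific dominant weight, so nothing beyond dominant integrality is needed.

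**Main obstacle.** There is no serious obstacle here — this is essentially a formal reduction of the reductive case to the well-documented semisimple case. The only point requiring genuine care (and the thing I would write out most carefully) is the bookkeeping that passing from $\mathfrak{g}^{der}$ to $\mathfrak{g}^{\star}$ via the central twist sends $\chi^{der}(w(\mu+\rho)-\rho)$ exactly to $\chi(w(\lambda+\rho)-\rho)$ in the paper's weight conventions (in particular that the $\rho$ of $\mathfrak{g}^{\star}$ and of $\mathfrak{g}^{der}$ agree in the non-central coordinates and that $\rho$ is central-trivial), together with checking that the direct-summand/projector description survives the twist. Everything else — exactness, PBW compatibility, the product structure of the Weyl group — is routine.
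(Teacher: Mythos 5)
Your proposal is correct and follows exactly the route the paper takes: the paper simply cites the classical BGG resolution (\cite{BGG}, \cite{Hum}) and remarks that ``the passage from semisimple to reductive Lie algebras merely consists of adding a central character,'' which is precisely the reduction via $\mathfrak{g}^{\star}=\mathfrak{z}^{\star}\oplus\mathfrak{g}^{der}$ and the central twist that you spell out. Your write-up just makes explicit the bookkeeping the paper leaves implicit.
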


Here $W^{(r)}$ denotes the elements in the Weyl group of length $r$.
The Weyl group of $G^{\star}(\mathbb{C})$ is the same as that for
its derived group, hence isomorphic to $\left\{ \pm1\right\} ^{d}$,
and an element $(\epsilon_{11},...,\epsilon_{rd_{r}})$ acts on a weight
$(k_{11},...,k_{rd_{r}},w)$ by $(\epsilon_{11},...,\epsilon_{rd_{r}}).(k_{11},...,k_{rd_{r}},w)=(\epsilon_{11}k_{11},...,\epsilon_{rd_{r}}k_{rd_{r}},w)$. The length of $(\epsilon_{11},...,\epsilon_{rd_{r}})$ is $\#\left\{ (i,j)\mid\epsilon_{ij}=-1\right\} $.
$\rho$ denotes half the sum of the positive roots, which in our case
is $(1,...,1,0)$. The theorem assumes $V$ irreducible; we may treat
arbitrary semisimple representations by decomposing and taking direct
sums (of course this decomposition may not be unique in general).

Recall from above our representations
\[
\bigotimes_{i}Sym^{k_{i}-2d_{i}}(Sd_{i})
\]
where the $k_{i}$ are integers such that $k_{i}\geq2$, and that
\[
Sym^{k_{i}-2d_{i}}(Sd_{i})=\bigoplus_{(k_{i1},...,k_{id_{i}},a_{i1},...,a_{id_{i}})}\left(\bigotimes_{j} Sym^{k_{ij}-2-2a_{ij}}(Sd_{ij})\right)\otimes {\rm det}^{\sum a_{ij}}
\]
and hence 
\[
\bigotimes_{i}Sym^{k_{i}-2d_{i}}(Sd_{i})=\bigoplus_{(k_{11},...,k_{rd_{r}},a_{11},...,a_{rd_{r}})}\left(\bigotimes_{i,j} Sym^{k_{ij}-2-2a_{ij}}(Sd_{ij})\right)\otimes {\rm det}^{\sum a_{ij}}
\]
with $\left(\bigotimes_{i,j}Sym^{(k_{ij}-2)-2a_{ij}}(Sd_{ij})\right)\otimes{\rm det}^{\sum a_{ij}}$
irreducible of dominant weight 
\[
(k_{11}-2-2a_{11},...,k_{rd_{r}}-2-2a_{rd_{r}},k-2d)
\]
The BGG complex of $\left(\bigotimes_{i,j}Sym^{(k_{ij}-2)-2a_{ij}}(Sd_{ij})\right)\otimes{\rm det}^{\sum a_{ij}}$
therefore has $r$-th term 
\[
\bigoplus_{(\epsilon_{11},...,\epsilon_{rd_{r}})}U(\mathfrak{g}^{\star})\otimes_{U(\mathfrak{b}^{\star})}\chi(\epsilon_{11}(k_{11}-1-2a_{11})-1,...,\epsilon_{rd_{r}}(k_{rd_{r}}-1-2a_{rd_{r}})-1,k-2d)
\]
where the direct sum is taken over all $(\epsilon_{11},...,\epsilon_{rd_{r}})\in\left(W^{(r)}\right)^{d}$.
Note that $\epsilon_{ij}(k_{ij}-1-2a_{ij})-1$ is $k_{ij}-2-2a_{ij}$ if
$\epsilon_{ij}=1$ and $-k_{ij}+2a_{ij}$ if $\epsilon_{ij}=-1$.

\subsection{Dual BGG complexes for $X$\label{sub:2.5 FBGG}}

The automorphic vector bundle construction produces, given the BGG complex
of an irreducible representation $V$, a complex of vector bundles
and differential operators which is a quasi-isomorphic direct summand
of the de Rham complex of the vector bundle with connection associated
to $V$ (see e.g. \cite{Fal}, \cite{ChFa} or \cite{LaPo}). Specialized
to our situation, the theorem is:

\begin{thm}
(\cite{Fal} Thm 3, \cite{ChFa}) \label{thm: Faltings's BGG}We have,
associated to the irreducible representation of dominant weight $\lambda=(k_{1},...,k_{d},w)$,
over $\overline{\mathbb{Q}}$, a complex 
\[
0\rightarrow\mathcal{K}_{\lambda}^{0}\rightarrow...\rightarrow\mathcal{K}_{\lambda}^{d}\rightarrow0
\]
called the dual BGG complex, with $\mathcal{K}_{\lambda}^{r}=\bigoplus_{w\in W^{(r)}}W(w(\lambda+\rho)-\rho)^{\vee}$
on $X$ where the maps are Hecke-equivariant differential operators,
which is a quasi-isomorphic direct summand of the de Rham complex
$V(\lambda)^{\vee}\otimes_{\mathcal{O}_{X}}\Omega_{X}^{\bullet}$
of $V(\lambda)^{\vee}$.
\end{thm}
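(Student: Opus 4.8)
The plan is to deduce this from the corresponding statement for the modular curve / abelian scheme case by the standard patching argument, since our Shimura variety $X$ is a disjoint union of quotients of products of copies of $\mathcal{H}^{\pm}$ and the automorphic vector bundle construction is compatible with products and with restriction of scalars. Concretely, I would first invoke the general machinery of Faltings \cite{Fal} (see also \cite{ChFa}, \cite{LaPo}) for the dual BGG complex: given any irreducible representation $V$ of a reductive group attached to a PEL Shimura datum, there is a canonical decomposition of the de Rham complex $V(\lambda)^{\vee}\otimes_{\mathcal{O}_X}\Omega_X^{\bullet}$, as a complex of $\mathcal{O}_X$-modules with differential operators, into the dual BGG complex $\mathcal{K}_{\lambda}^{\bullet}$ and an acyclic complement, and this decomposition is functorial in $V$ and hence Hecke-equivariant. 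The terms $\mathcal{K}_{\lambda}^{r}$ are computed by applying the automorphic vector bundle functor $W(\cdot)$ to the terms $C_r^V=\bigoplus_{w\in W^{(r)}}U(\mathfrak{g}^{\star})\otimes_{U(\mathfrak{b}^{\star})}\chi(w(\lambda+\rho)-\rho)$ of the Lie-algebra BGG resolution from Theorem~\ref{thm:reductive BGG}; since $W(\cdot)$ sends $\chi(\mu)$ (viewed as a $B^{\star}$-representation with unipotent acting trivially) to the line bundle $W(\mu)$ and since duality on representations corresponds to duality on bundles, we get $\mathcal{K}_{\lambda}^{r}=\bigoplus_{w\in W^{(r)}}W(w(\lambda+\rho)-\rho)^{\vee}$ exactly as claimed.

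The main steps in order: (i) recall the Lie-algebra BGG resolution $C_{\bullet}^V$ for $\mathfrak{g}^{\star}$, which we already have as Theorem~\ref{thm:reductive BGG}, together with its realization as a direct summand of the bar complex $D_{\bullet}^V$; (ii) apply the automorphic vector bundle construction (Theorem on automorphic vector bundles above) to the complex $D_{\bullet}^V$ — this produces the de Rham complex $V(\lambda)^{\vee}\otimes\Omega_X^{\bullet}$, because $D_r^V=U(\mathfrak{g}^{\star})\otimes_{U(\mathfrak{b}^{\star})}(\wedge^r(\mathfrak{g}^{\star}/\mathfrak{b}^{\star})\otimes V)$ goes precisely to $V(\lambda)^{\vee}\otimes\Omega_X^r$ under the contravariant functor, the identification of $\wedge^r(\mathfrak{g}^{\star}/\mathfrak{b}^{\star})$ with $\Omega_X^r$ being exactly the computation $\Omega_X^1\leftrightarrow\chi(2,0,\dots,0,0)\oplus\cdots$ recorded in \S\ref{sub:2.1 Aut vec bundle}; (iii) observe that applying the functor to the inclusion $C_{\bullet}^V\hookrightarrow D_{\bullet}^V$ and its splitting yields $\mathcal{K}_{\lambda}^{\bullet}$ as a direct summand of $V(\lambda)^{\vee}\otimes\Omega_X^{\bullet}$; (iv) check that the quasi-isomorphism is preserved — this follows because the functor, restricted to the relevant category, is exact (it is the composition of pulling back along the compact dual flag variety and descending, both exact on the locally free sheaves in play), so an acyclic complex goes to an acyclic complex and a quasi-isomorphism goes to a quasi-isomorphism; (v) note Hecke-equivariance: all the maps in $C_{\bullet}^V$ and $D_{\bullet}^V$ are morphisms of $(\mathfrak{g}^{\star},B^{\star})$-type data, hence go to Hecke-equivariant maps by the equivariance clause in the automorphic vector bundle theorem; (vi) finally, identify the differentials as the differential operators of the stated weights by tracking $w(\lambda+\rho)-\rho$ through the functor, using that the map $C_r^V\to C_{r-1}^V$ is built from elements of $\mathfrak{g}^{\star}/\mathfrak{b}^{\star}$ and therefore becomes a first-order differential operator.

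The one genuinely delicate point — and the step I would flag as the main obstacle — is the compatibility of BGG with the \emph{dual} automorphic vector bundle construction over the arithmetic base, i.e. verifying that Faltings's complex over $\overline{\mathbb{Q}}$ (or over $E$) really is a direct summand of the full de Rham complex and not merely quasi-isomorphic after some completion or only over $\mathbb{C}$. This is where one must be careful that the splitting of $C_{\bullet}^V$ inside $D_{\bullet}^V$ is defined over the base field (it is: BGG splittings are rational, since the relevant $\mathfrak{p}$-module decompositions are) and that the automorphic vector bundle functor is genuinely functorial over the reflex field $\mathbb{Q}$ before base change to extensions of $\mathbb{Q}_p$, which is exactly the content of the automorphic vector bundle theorem cited above. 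For our specific $G^{\star}$, which is essentially a product of $\mathrm{GL}_2$'s constrained on determinants, everything reduces factor-by-factor to Faltings's original computation for the modular curve (the $\theta=\theta^{k-1}:\omega^{2-k}\to\omega^k$ map), so the required compatibilities are known; hence I would present this proof largely as a citation of \cite{Fal} and \cite{ChFa} with the modest additional bookkeeping of (a) passing from a simple to a reductive group (only a central character is added, as already remarked after Theorem~\ref{thm:reductive BGG}) and (b) taking products/restriction of scalars across the indices $(i,j)$, neither of which affects exactness or the direct-summand property.
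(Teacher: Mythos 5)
Your proposal is essentially the paper's own treatment: the paper gives no independent proof of this statement, but simply quotes Faltings \cite{Fal} and Chai--Faltings \cite{ChFa} (cf.\ also \cite{LaPo}), specialized to $G^{\star}$ via the automorphic vector bundle functor applied to the BGG resolution of Theorem \ref{thm:reductive BGG}, exactly as you outline. Your additional bookkeeping (functoriality/Hecke-equivariance, rationality of the splitting over the reflex field, passage from semisimple to reductive by adding a central character) is consistent with the remarks surrounding the theorem in the text, so there is nothing to compare beyond the shared citation.
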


Here, as earlier and as will be the case in the rest of the article,
$V(\lambda)=V(k_{11},...,k_{rd_{r}},w)$ denotes the vector bundle with
connection associated to $\left(\bigotimes_{i,j}Sym^{k_{ij}}(Sd_{ij})\right)\otimes{\rm det}^{\left(w-\sum k_{ij}\right)/2}$.
As in the previous section, we may of course consider arbitrary semisimple
representations by decomposing and taking direct sums. Thus we get BGG complexes of $Sym^{k-2d}\left(H_{dR}^{1}(A/X)\right)$ resp. $\bigotimes_{i}Sym^{k_{i}-2d_{i}}\left(H_{dR}^{1}(A/X)_{i}\right)$ (associated with $Sym^{k-2d}(Sd^{\vee})$ resp. $\bigotimes_{i}Sym^{k_{i}-2d_{i}}(Sd_{i}^{\vee})$)  that are direct summands of their respective de Rham complexes. Here we are using that the action of $\mathcal{O}_{B^{op}}\otimes \mathbb{Q}_{p}=M_{2}(F_{p})= \prod_{i}M_{2}(F_{\mathfrak{p}_{i}})$ on $H^{1}_{dR}(A/X)$ gives a decomposition
\[
H^{1}_{dR}(A/X)= \bigoplus_{i}H^{1}_{dR}(A/X)_{i}
\]
We have
\[
BGG\left(\bigotimes_{i}Sym^{k_{i}-2d_{i}}(H^{1}_{dR}(A/X)_{i})\right)=
\]
\begin{equation}
= \bigoplus_{(k_{11},...,k_{rd_{r}},a_{11},...,a_{rd_{r}})}BGG\left(V(k_{11}-2a_{11}-2,...,k_{rd_{r}}-2a_{rd_{r}}-2,k-2d)^{\vee}\right) \label{eq: BGG2}
\end{equation}

and finally we note that the $r$-th term of the BGG complex of $V(k_{11},...,k_{rd_{r}},k-2d)^{\vee}(-\sum a_{ij})$ is 
\begin{equation}
\bigoplus_{(\epsilon_{11},...,\epsilon_{rd_{r}},w)}W(\epsilon_{11}(k_{11}-2a_{11}-1)-1,..., \epsilon_{rd_{r}}(k_{rd_{r}}-2a_{rd_{r}}-1)-1,k-2d) ^{\vee} \label{eq: BGG3}
\end{equation}

\section{Rigid and overconvergent de Rham cohomology \label{sec:3}}

As references for rigid cohomology we will mainly use \cite{LeSt1},
but see also (for example) the papers \cite{Ked1}, \cite{Ked2} for
a slightly different and perhaps more concrete perspective, or the
paper \cite{LeSt2} for a site-theoretic framework paralleling that
of crystalline cohomology. We are ultimately interested in the rigid
cohomology groups of $X_{\mathbb{F}_{p}}^{ord}$ (and the overconvergent
de Rham cohomology groups of $X_{rig}^{ord}$) with values in certain
overconvergent $F$-isocrystals (or overconvergent differential modules),
considered as Hecke modules and as $F$-isocrystals. Before we proceed, let us recall the notion of a frame from \cite{LeSt1} (Def. 3.1.5). 
\begin{defn}
Let $K$ be a complete valued field, let $\mathcal{V}$ be its valuation
ring, and $k$ its residue field. A ($K$-)frame is a diagram 
\[
S\hookrightarrow T\hookrightarrow P
\]
 consisting of an open immersion of $k$-schemes $S\hookrightarrow T$
and a closed immersion of the $k$-scheme $T$ into a formal $\mathcal{V}$-scheme
$P$.
\end{defn}
We will also write frames as $S\subseteq T\subseteq P$. Frames will
be important later when we consider rigid cohomology. Morphisms of
frames are simply commutative diagrams 
\[
\xymatrix{S\ar@{^{(}->}[r]\ar[d]^{f} & T\ar@{^{(}->}[r]\ar[d]^{g} & P\ar[d]^{u}\\
S^{\prime}\ar@{^{(}->}[r] & T^{\prime}\ar@{^{(}->}[r] & P^{\prime}
}
\]
 where $f$ and $g$ are morphisms of $k$-schemes and $u$ is a morphism
of formal $\mathcal{V}$-schemes (\cite{LeSt1} Def. 3.1.6). The morphism
is said to be quasi-compact if $u$ is quasi-compact (\cite{LeSt1}
Def. 3.2.1), and etale (resp. smooth) if $u$ is etale (resp. smooth)
in a neighbourhood of $S$ (inside $P$) (\cite{LeSt1} Def. 3.3.5).
The morphism is said to be proper if $g$ is proper (\cite{LeSt1}
Def. 3.3.10). Given a morphism of frames as above, $u$ induces a morphism
$u_{K}\,:\, P_{rig}\rightarrow P_{rig}^{\prime}$ of rigid analytic
varieties which maps $]S[_{P}$ into $]S^{\prime}[_{P^{\prime}}$.

To analyze the rigid cohomology groups of certain overconvergent isocrystals on $X_{\mathbb{F}_{p}}^{ord}$ we introduce the frames 
\[
A_{\mathbb{F}_{p}}^{ord}\subseteq A_{\mathbb{F}_{p}}^{DP}\subseteq\hat{\mathcal{A}}^{DP}
\]
\[
X_{\mathbb{F}_{p}}^{ord}\subseteq X_{\mathbb{F}_{p}}^{DP}\subseteq\hat{\mathcal{X}}^{DP}
\]
\[
A_{k_{L}}^{PR}=A_{k_{L}}^{PR}\subseteq \hat{\mathcal{A}}^{PR}
\]
\[
X_{k_{L}}^{PR}=X_{k_{L}}^{PR}\subseteq\hat{\mathcal{X}}^{PR}
\]
\[
X_{k_{L}}^{ss,PR}=X_{k_{L}}^{ss,PR} \subseteq\hat{\mathcal{X}}^{PR}
\]

Note that there is a Cartesian map of frames from the first frame above to
the second (this gives the definition of $A_{\mathbb{F}_{p}}^{ord}$) resp. from the third to the fourth coming from the map $\mathcal{A}^{DP}\rightarrow\mathcal{X}^{DP}$ resp. $\mathcal{A}^{PR}\rightarrow \mathcal{X}^{PR}$.
We may use these frames to interpret overconvergent isocrystals (and
rigid cohomology) on $X_{\mathbb{F}_{p}}^{ord}$ resp. $X_{k_{L}}^{PR}$ as overconvergent (on $X_{rig}^{ord}$ resp. $X_{rig}$) differential modules (and de Rham cohomology)
on $X_{rig}$, since $X_{\mathbb{F}_{p}}^{DP}$ and $X_{k_{L}}^{PR}$
are proper and $\hat{\mathcal{X}}^{DP}$ resp. $\hat{\mathcal{X}}^{PR}$
are smooth in a neighbourhood of $X_{\mathbb{F}_{p}}^{ord}$ resp. $X_{k_{L}}^{PR}$ (this is Cor. 8.1.9 and Prop. 7.2.13 of \cite{LeSt1}).
We may also use lifts of Frobenius to calculate Frobenius actions
(see \cite{LeSt1} \S 8.3). It should be noted that functoriality is
not as rigid as frames look like; given a frame $X\subseteq Y\subseteq P$
one does not need to lift morphisms to $P$, it is sufficient to lift
them to a strict neighbourhood of $]X[_{P}$ (tube of $X$ inside
$P$), see \cite{LeSt1} Prop. 8.1.6 (see also \cite{LeSt2}, where
this observation is built into the foundations).

Consider the universal abelian varieties $A_{\mathbb{F}_{p}}^{ord}\rightarrow X_{\mathbb{F}_{p}}^{ord}$ resp. $A_{k_{L}}^{PR}\rightarrow X_{k_{L}}^{PR}$. The relative rigid cohomology groups $H_{rig}^{1}(A_{\mathbb{F}_{p}}^{ord}/X_{\mathbb{F}_{p}}^{ord})$ resp. $H^{1}_{rig}(A_{k_{L}}^{PR}/X^{PR}_{k_{L}})$
are overconvergent $F$-isocrystals on $X_{\mathbb{F}_{p}}^{ord}$ resp. $X_{k_{L}}^{PR}$ and its fibres over closed points are the contravariant Dieudonne module of the corresponding fibre of the universal abelian variety with its Frobenius action (see e.g. \cite{Tzu} Thm 4.1.4 for the relevant base change assertion).
The morphism from the Frobenius pullback of $H_{rig}^{1}(A_{\mathbb{F}_{p}}^{ord}/X_{\mathbb{F}_{p}}^{ord})$
to $H_{rig}^{1}(A_{\mathbb{F}_{p}}^{ord}/X_{\mathbb{F}_{p}}^{ord})$ is given
by pull back along the relative Frobenius of $A_{\mathbb{F}_{p}}^{ord}/X_{\mathbb{F}_{p}}^{ord}$ (this is the induced
Frobenius structure on rigid cohomology; see also the remark by the
end of section 2 of \cite{Col}). By \cite{Tzu} Thm 4.1.4 again, the restrictions
of $H_{rig}^{1}(A_{k_{L}}^{PR}/X_{k_{L}}^{PR})$ to $X_{k_{L}}^{ord}$
resp. $X_{k_{L}}^{ss,PR}$ are $H_{rig}^{1}(A_{k_{L}}^{ord}/X_{k_{L}}^{ord})$
resp. $H_{rig}^{1}(A_{k_{L}}^{ss,PR}/X_{k_{L}}^{ss,PR})$
(where $A_{k_{L}}^{ss,PR}$ denotes the restriction of $A_{k_{L}}$ to $X_{k_{L}}^{ord}$
resp. $X_{k_{L}}^{ss,PR}$). Since rigid cohomology commutes finite base extensions we have $H_{rig}^{1}(A_{\mathbb{F}_{p}}^{ord}/X_{\mathbb{F}_{p}}^{ord})\otimes_{\mathbb{Q}_{p}}L=H_{rig}^{1}(A_{k_{L}}^{ord}/X_{k_{L}}^{ord})$ (as $F$-isocrystals). The rigid cohomology of certain summands of symmetric powers of these overconvergent $F$-isocrystals will be our main object of study in this section.

\begin{rem}
1)\label{rem:Hecke actions on coh} As we are using specific frames
to compute rigid cohomology we will think of these rigid cohomology
groups and overconvergent de Rham cohomology groups as ``the same'';
even though we write ``$H_{rig}$'' from now on we may occasionally
want to think of these as overconvergent de Rham cohomology groups.
Recall the Hecke algebras $\mathcal{H}_{K}$, $\mathcal{H}_{K}^{p}$,
$\mathcal{H}^{p}$ and $\mathcal{H}$ introduced by the end of section \ref{sub:U_p}.
$\mathcal{H}_{K}^{p}$ acts as correspondences on $\mathcal{X}^{PR}$ and $\mathcal{X}^{PR}$ and
both morphisms defining the correspondences are finite etale. Hence
we get compatible actions on $X_{\mathbb{F}_{p}}^{ord}$, $X^{PR}_{k_{L}}$ and $X_{rig}$
which preserve $X_{k_{L}}^{ord}$ and $X_{k_{L}}^{ss,PR}$ (as well as $X_{rig}^{ord}$ and $X_{rig}^{ss}$, by compatibility). At $p$ we will only consider $U_{p}$, $T_{p}$, $\left\langle p\right\rangle ^{\pm1}$
and $Fr$. $Fr$ is a Frobenius lift for $X_{\mathbb{F}_{p}}^{ord}$
and hence gives a concrete way of computing Frobenius actions on the
relevant overconvergent $F$-isocrystals on $X_{\mathbb{F}_{p}}^{ord}$.
Furthermore, $Fr$,$U_{p}$, $\left\langle p\right\rangle ^{\pm1}$
and $T_{p}$ define correspondences with both maps etale on $X_{rig}$
and $X_{rig}^{ord}$, and will act on the relevant cohomology groups
and spaces of automorphic forms on $X_{rig}$ and $X_{rig}^{ord}$.

2) There is a point of concern of what the natural choice of base
field is; when working with automorphic forms it is perhaps $\mathbb{C}_{p}$,
and $\mathbb{Q}_{p}$ or a finite extension therefore when working
with overconvergent $F$-isocrystals. In this section, when we consider schemes over $\mathbb{F}_{p}$, $k_{L}$ and $\overline{\mathbb{F}}_{p}$ respectively, our frames will be $\mathbb{Q}_{p}$-, $L$- and $\mathbb{C}_{p}$-frames respectively. We would therefore like to know
that our constructions commute with the change of base field from a
finite extension of $\mathbb{Q}_{p}$ to $\mathbb{C}_{p}$. Rigid
cohomology (and coherent cohomology) commutes with a finite extension
of base field (\cite{LeSt1} Proposition 8.2.14). However, rigid cohomology
is not known in general to commute with change of base field (we are
grateful to Le Stum for informing us of this). For us however we may
avoid this as follows. First note that for coherent cohomology of
complexes on affinoids this is clear, this is just flat base change
for modules (there is also no higher coherent cohomology) . As
overconvergent de Rham cohomology on $X^{ord}_{rig}$ is just the direct limit of de Rham
cohomology taken over a cofinal set of strict neighbourhoods (which
we may chose to be affinoid), the assertion follows by exactness of
direct limits of modules and the fact that direct limits commute with
tensor products. This shows that rigid cohomology on $X^{ord}_{\mathbb{F}_{p}}$ commutes with change of base field. For $X_{k_{L}}^{PR}$ we may use rigid analytic GAGA and flat base change in the algebraic category. Finally, the base change assertion for $X_{k_{L}}^{ss,PR}$ follows from that for $X_{k_{L}}^{ord}$ and $X_{k_{L}}^{PR}$ by the excision sequence and the functoriality of the base change morphism. Thus no real problem arises from changing base field.
We hope that the reader will find it easy to determine which base
field is appropriate throughout this section. The only point that
perhaps requires some clarification is that when the base field is
not $\mathbb{Q}_{p}$, the semilinear Frobenius action on overconvergent
$F$-isocrystals is a semilinearization of the linear Frobenius
action on automorphic forms (consider for example the upcoming Theorem
\ref{thm:rig coh as oc aut forms}). When the base field is $\mathbb{Q}_{p}$,
however, both actions agree, and since slopes for $F$-isocrystals
remain the same after change of base field, the linear Frobenius action
and the semilinear Frobenius action on the relevant rigid cohomology
groups will have the same slopes. This is the reason that we are using the Deligne-Pappas model; we wish to have a $\mathbb{Q}_{p}$-frame for the ordinary locus. These observations regarding base fields will be implicitly applied
when we compare slopes on rigid cohomology with $U_{p}$-slopes in
section \ref{sub:Classicality}. 
\end{rem}

\subsection{Relation to overconvergent automorphic forms \label{sub: 3.1 comparison}}

Given the Cartesian maps of frames 
\[
\left(A_{\mathbb{F}_{p}}^{ord}\subseteq A_{\mathbb{F}_{p}}^{DP}\subseteq \hat{\mathcal{A}}^{DP}\right) \rightarrow \left(X_{\mathbb{F}_{p}}^{ord}\subseteq X_{\mathbb{F}_{p}}^{DP}\subseteq\hat{\mathcal{X}}^{DP}\right)
\]
\[
\left(A_{k_{L}}^{PR}= A_{k_{L}}^{PR}\subseteq \hat{\mathcal{A}}^{PR}\right)\rightarrow \left(X_{k_{L}}^{PR}= X_{k_{L}}^{PR}\subseteq \hat{\mathcal{X}}^{PR}\right)
\]
and the fact that these both frames realize rigid cohomology, we deduce
from the definition of rigid cohomology that the overconvergent resp. convergent $F$-isocrystal
$H_{rig}^{1}(A_{\mathbb{F}_{p}}^{ord}/X_{\mathbb{F}_{p}}^{ord})$ resp. $H^{1}_{rig}(A_{k_{L}}^{PR}/X_{k_{L}}^{PR})$ is realized
by the overconvergent resp. convergent de Rham cohomology $H^{1}_{dR}(A_{rig}^{ord}\subseteq A_{rig}/X_{rig}^{ord}\subseteq X_{rig})$ resp. $H^{1}_{dR}(A_{rig}/X_{rig})$ (the former over $\mathbb{Q}_{p}$, the latter over $L$). Since $\mathcal{A}$ and $\mathcal{X}$ are proper we have $H^{1}_{dR}(A_{rig}/X_{rig})= H^{1}_{dR}(A_{an}/X_{an})$ and by comparison between algebraic and rigid analytic de Rham cohomology
(see e.g. \cite{AnBa} Thm. IV.4.1) we have $H^{1}_{dR}(A_{an}/X_{an})= H^{1}_{dR}(A/X)$,
hence $H^{1}_{dR}(A_{rig}/X_{rig})=\left(H_{dR}^{1}(A/X)\right)_{an}$,
and similarly for its symmetric powers.

To simplify notation we will put 
\[
V^{\dagger}(k_{11},...,k_{rd_{r}},w)=j_{X_{\overline{\mathbb{F}}_{p}}^{ord}}^{\dagger}\left(V(k_{11},...,k_{rd_{r}},w)_{an}\right)
\]
and
\[
W^{\dagger}(k_{11},...,k_{rd_{r}},w)=j_{X_{\overline{\mathbb{F}}_{p}}^{ord}}^{\dagger}\left(W(k_{11},...,k_{rd_{r}},w)_{an}\right)
\]
where $j$ denotes the open immersion $X^{ord}_{rig}\hookrightarrow X_{rig}$. These are overconvergent sheaves on $X_{rig}^{ord}$ (see \cite{LeSt1} section 5.1 for the definition of $j^{\dagger}$, it is probably easiest
to use his Prop. 5.1.12 as the definition). We may replace $\overline{\mathbb{F}}_{p}$
by $\mathbb{F}_{p}$ when the representation is defined over $\mathbb{Q}_{p}$.
Applying analytification and $j_{X_{\overline{\mathbb{F}}_{p}}^{ord}}^{\dagger}$ (both are
exact functors) to our dual BGG complexes, we get overconvergent dual
BGG complexes $\mathcal{K}_{(k_{11},...,k_{rd_{r}},w)}^{\dagger,\bullet}$
on $X_{rig}^{ord}$ which are direct summands of corresponding the
overconvergent de Rham complexes. Note that 
\[
H^{0,\dagger}(X_{rig}^{ord},W(k_{11},...,k_{rd_{r}},w))=H^{0}(X_{rig},W^{\dagger}(k_{11},...,k_{rd_{r}},w))
\]
so the $W^{\dagger}(k_{11},...,k_{rd_{r}},w)$ are the "sheaves of overconvergent automorphic forms". We now wish to interpret $H_{rig}^{d}(X_{\overline{\mathbb{F}}_{p}}^{ord},V^{\dagger}(k_{11},...,k_{rd_{r}},w)^{\vee})$ in terms of overconvergent automorphic forms. Since $X_{\mathbb{F}_{p}}^{ord}$
is affine, $X_{rig}^{ord}$ and its small strict neighbourhoods are
quasi-Stein and hence
\[
H^{i}(X_{rig},W^{\dagger}(k_{11},...,k_{rd_{r}},w))=0
\]
for $i\geq1$ (coherent cohomology). From this we get the following
theorem, which is the analogue of Theorem 5.4 of \cite{Col}:

\begin{thm}
\label{thm:rig coh as oc aut forms}$H_{rig}^{i}(X_{\overline{\mathbb{F}}_{p}}^{ord},V^{\dagger}(k_{11},...,k_{rd_{r}},w)^{\vee})$ is equal to
\[
h^{i}\left(\bigoplus_{(\epsilon_{j})\in W^{(\bullet)}}H^{0}(X_{rig}, W^{\dagger}(\epsilon_{11}(k_{11}+1)-1,...,\epsilon_{rd_{r}}(k_{rd_{r}}+1)-1,w)^{\vee})\right)
\]
 Here $h^{i}$ stands for ``$i$-th cohomology of the complex''.
In particular, if we denote by $\theta_{(k_{11},...,k_{rd_{r}},w)}$
the map 
\[
\bigoplus_{(\epsilon_{j})\in W^{(d-1)}}W^{\dagger}(\epsilon_{11}(k_{11}+1)-1,...,\epsilon_{rd_{r}}(k_{rd_{r}}+1)-1,w)^{\vee}\,\longrightarrow W^{\dagger}(k_{11}+2,...,k_{rd_{r}}+2,-w)
\]
 and by abuse of notation also the induced map
\[
\bigoplus_{(\epsilon_{j})\in W^{(d-1)}}H^{0}(X_{rig},W^{\dagger}(\epsilon_{11}(k_{11}+1)-1,...,\epsilon_{rd_{r}}(k_{rd_{r}}+1)-1,w)^{\vee})\,\longrightarrow 
\]
\[
\longrightarrow H^{0}(X_{rig},W^{\dagger}(k_{11}+2,...,k_{rd_{r}}+2,-w))
\]
 of global sections, then 
\[
H_{rig}^{d}(X_{\overline{\mathbb{F}}_{p}}^{ord},V^{\dagger}(k_{11},...,k_{rd_{r}},w)^{\vee})={\rm Coker}\,\theta_{(k_{11},..,k_{rd_{r}},w)}
\]
\end{thm}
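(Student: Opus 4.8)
The plan is to adapt the proof of Theorem~5.4 of \cite{Col} to our situation; the argument rests on three ingredients. First, the frame $X_{\mathbb{F}_{p}}^{ord}\subseteq X_{\mathbb{F}_{p}}^{DP}\subseteq\hat{\mathcal{X}}^{DP}$ realizes the rigid cohomology of the overconvergent $F$-isocrystal $V^{\dagger}(k_{11},...,k_{rd_{r}},w)^{\vee}$ on $X_{\mathbb{F}_{p}}^{ord}$ as the overconvergent de Rham hypercohomology on $X_{rig}$ of the complex $V^{\dagger}(k_{11},...,k_{rd_{r}},w)^{\vee}\otimes_{\mathcal{O}}\Omega_{X_{rig}}^{\bullet}$. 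Second, Faltings's dual BGG complex (Theorem~\ref{thm: Faltings's BGG}), after applying the exact functors of analytification and $j^{\dagger}$, is a quasi-isomorphic direct summand of this overconvergent de Rham complex. Third, every term of the resulting overconvergent dual BGG complex is acyclic for coherent cohomology on $X_{rig}$, so its hypercohomology reduces to the cohomology of the complex of its global sections. Granting these, $H_{rig}^{i}(X_{\overline{\mathbb{F}}_{p}}^{ord},V^{\dagger}(k_{11},...,k_{rd_{r}},w)^{\vee})$ is $h^{i}$ of the global-sections complex, and reading off its top two terms yields the cokernel description of $H_{rig}^{d}$.

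For the first ingredient the relevant facts are collected at the start of \S\ref{sub: 3.1 comparison}: since $X_{\mathbb{F}_{p}}^{DP}$ is proper and $\hat{\mathcal{X}}^{DP}$ is smooth in a neighbourhood of $X_{\mathbb{F}_{p}}^{ord}$ (Cor.~8.1.9 and Prop.~7.2.13 of \cite{LeSt1}) this frame computes rigid cohomology, and the Cartesian pair of frames identifies $H_{rig}^{1}(A_{\mathbb{F}_{p}}^{ord}/X_{\mathbb{F}_{p}}^{ord})$ with the overconvergent de Rham cohomology $H_{dR}^{1}(A_{rig}^{ord}\subseteq A_{rig}/X_{rig}^{ord}\subseteq X_{rig})$; by rigid-analytic GAGA and the comparison between algebraic and rigid-analytic de Rham cohomology (\cite{AnBa}) this equals $j^{\dagger}$ of the analytification of the algebraic $H_{dR}^{1}(A/X)$, and the same holds for the relevant summands of its symmetric powers, hence for $V^{\dagger}(k_{11},...,k_{rd_{r}},w)$ and its dual. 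Thus $H_{rig}^{i}(X_{\overline{\mathbb{F}}_{p}}^{ord},V^{\dagger}(k_{11},...,k_{rd_{r}},w)^{\vee})$ is the hypercohomology over $X_{rig}$ of the overconvergent de Rham complex attached to $j^{\dagger}$ of the analytified bundle with connection $V(k_{11},...,k_{rd_{r}},w)^{\vee}$.

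For the second and third ingredients, applying analytification and $j^{\dagger}$ to the algebraic dual BGG complex of $V(k_{11},...,k_{rd_{r}},w)^{\vee}$ produces the overconvergent dual BGG complex $\mathcal{K}_{(k_{11},...,k_{rd_{r}},w)}^{\dagger,\bullet}$, still a quasi-isomorphic direct summand of the overconvergent de Rham complex. Its $r$-th term is $\bigoplus_{\epsilon\in W^{(r)}}W^{\dagger}(\epsilon(\lambda+\rho)-\rho)^{\vee}$ with $\lambda=(k_{11},...,k_{rd_{r}},w)$; since $\rho=(1,...,1,0)$ and the Weyl group $\{\pm1\}^{d}$ acts by changing signs of the $k_{ij}$ while fixing $w$, this is exactly $\bigoplus_{\epsilon}W^{\dagger}(\epsilon_{11}(k_{11}+1)-1,...,\epsilon_{rd_{r}}(k_{rd_{r}}+1)-1,w)^{\vee}$, matching the statement. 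Because $X_{\mathbb{F}_{p}}^{ord}$ is affine, the small strict neighbourhoods of $X_{rig}^{ord}$ are quasi-Stein, so $H^{i}(X_{rig},W^{\dagger}(k_{11}',...,k_{rd_{r}}',w'))=0$ for $i\geq1$ for every weight occurring; hence the hypercohomology spectral sequence of $\mathcal{K}_{(k_{11},...,k_{rd_{r}},w)}^{\dagger,\bullet}$ degenerates and $H_{rig}^{i}$ is the cohomology of the complex of global sections, which is the general formula of the theorem.

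Finally, for $i=d$ one notes that $W^{(d)}$ consists of the single longest element $\epsilon=(-1,...,-1)$, for which $\epsilon(\lambda+\rho)-\rho=(-k_{11}-2,...,-k_{rd_{r}}-2,w)$, so the top term of the global-sections complex is $H^{0}(X_{rig},W^{\dagger}(-k_{11}-2,...,-k_{rd_{r}}-2,w)^{\vee})=H^{0}(X_{rig},W^{\dagger}(k_{11}+2,...,k_{rd_{r}}+2,-w))$; the last differential $\mathcal{K}^{\dagger,d-1}\to\mathcal{K}^{\dagger,d}$ is, by construction, the map $\theta_{(k_{11},...,k_{rd_{r}},w)}$ on global sections, and since the top cohomology of a complex is the cokernel of its last map we obtain $H_{rig}^{d}(X_{\overline{\mathbb{F}}_{p}}^{ord},V^{\dagger}(k_{11},...,k_{rd_{r}},w)^{\vee})={\rm Coker}\,\theta_{(k_{11},...,k_{rd_{r}},w)}$. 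The step I expect to require the most care is the first one: pinning down that this particular frame genuinely computes the rigid cohomology of the $F$-isocrystal $V^{\dagger}(k_{11},...,k_{rd_{r}},w)^{\vee}$, and that the resulting cohomology is correctly identified — compatibly with the BGG decomposition and with the change-of-base-field considerations of Remark~\ref{rem:Hecke actions on coh} — with the algebraic de Rham hypercohomology computed on $X_{rig}$; once this identification is in place the remaining steps are purely formal manipulations of complexes and weights.
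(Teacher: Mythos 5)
Your proposal is correct and follows essentially the same route as the paper: identify $H_{rig}^{i}$ with overconvergent de Rham cohomology on $X_{rig}$ via the chosen frame, replace the de Rham complex by the overconvergent dual BGG complex, and use the quasi-Stein vanishing $H^{i}(X_{rig},W^{\dagger})=0$ for $i\geq1$ together with the hypercohomology spectral sequence, reading off the top degree as the cokernel of $\theta$. Your weight bookkeeping (the $\rho$-shift and the identification $W^{\dagger}(-k_{11}-2,\dots,-k_{rd_{r}}-2,w)^{\vee}=W^{\dagger}(k_{11}+2,\dots,k_{rd_{r}}+2,-w)$ for the longest Weyl element) matches the paper's conventions, so nothing further is needed.
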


\begin{proof}
We have 
\[
H_{rig}^{i}(X_{\overline{\mathbb{F}}_{p}}^{ord},V^{\dagger}(k_{11},...,k_{rd_{r}},w)^{\vee})=H_{dR}^{i}(X_{rig},V^{\dagger}(k_{11},...,k_{rd_{r}},w)^{\vee})=H^{i}(X_{rig},\mathcal{K}_{k_{11},...,k_{rd_{r}},w}^{\dagger,\bullet})
\]
where the first equality is by the definition of rigid cohomology and the second is by the
quasi-isomorphism of the de Rham complex of $V^{\dagger}(k_{11},...,k_{rd_{r}},w)^{\vee}$
and $\mathcal{K}_{k_{11},...,k_{rd_{r}},w}^{\dagger,\bullet}$. The vanishing
\[
H^{i}(X_{rig},W^{\dagger}(k_{11},...,k_{rd_{r}},w))=0
\]
for $i\geq1$ then gives the
first statement by the hypercohomology spectral sequence. The last
statement follows from the first and the definitions. \end{proof}

\begin{rem}
Note that all the previous equalities of cohomology groups are valid
as equalities of Hecke modules (cf. Remark \ref{rem:Hecke actions on coh}).
\end{rem}

Now look at $H_{rig}^{d}\left(X_{\mathbb{F}_{p}}^{ord},Sym^{k-2d}\left(H_{rig}^{1}(A_{\mathbb{F}_{p}}^{ord}/X_{\mathbb{F}_{p}}^{ord})\right)\right)$. It is an $F$-isocrystal over $\mathbb{Q}_{p}$. It has a direct summand 
\[
H^{d}_{rig}\left(X_{\mathbb{F}_{p}}^{ord},\bigotimes_{i}Sym^{k_{i}-2d_{i}}\left(H_{rig}^{1}(A_{\mathbb{F}_{p}}^{ord}/X_{\mathbb{F}_{p}}^{ord})_{i}\right)\right)
\]
We have
\[
\bigotimes_{i}Sym^{k_{i}-2d_{i}}\left(H_{rig}^{1} (A_{\overline{\mathbb{F}}_{p}}^{ord}/X_{\overline{\mathbb{F}}_{p}}^{ord})_{i}\right)=  
\]
\[
=\bigoplus_{(k_{11},...,k_{rd_{r}},a_{11},...,a_{rd_{r}})}V(k_{11}-2-2a_{11},...,k_{rd_{r}}-2-2a_{rd_{r}},k-2d)^{\vee}
\]
 and hence, letting $\mathcal{E}_{k_{1},...,k_{r}}= \bigotimes_{i}Sym^{k_{i}-2d_{i}}\left(H_{rig}^{1} (A_{\mathbb{F}_{p}}^{ord}/X_{\mathbb{F}_{p}}^{ord})_{i}\right)$,
\[
H_{rig}^{d}\left(X_{\overline{\mathbb{F}}_{p}}^{ord}, \mathcal{E}_{k_{1},...,k_{r}}\right)= \bigoplus_{(k_{11},..,k_{rd_{r}},a_{11},...,a_{rd_{r}})}{\rm Coker}\: \theta_{(k_{11}-2-2a_{11},...,k_{rd_{r}}-2-2a_{rd_{r}},k-2d)}
\]
Let us now fix $(k_{1},...,k_{r})$ and $(k_{11},...,k_{rd_{r}})$ such that $\sum k_{i}=k$, $\sum_{j} k_{ij}=k_{i}$ and $k_{ij}\geq 2$ for all $i$ and $j$. One of the summands above is
${\rm Coker}\:\theta_{(k_{11}-2,...,k_{rd_{r}}-2,k-2d)}$ which is a quotient
of $H^{0}(X_{rig},W^{\dagger}(k_{11},...,k_{rd_{r}},-k+2d))$. This is the
part of the cohomology we will be interested in.

\subsection{Small slope criterion for occurring in the cohomology\label{sub: 3.2 small slope in coh}}

Next, we need to know how to normalize the $U_{p}$-operator to achieve
optimal $p$-integrality. This has been done by Hida in \cite{Hid} in the general unramified situation and his method works for our $U_{p}$-operator as well, using the description as the trace of Frobenius (up to a diamond operator). We can formulate the result as:

\begin{prop}
\label{pro: scaling hecke}The $U_{p}$-operator is $p$-integral
on $H^{0}\left(X_{rig}^{ord},W\left(k_{11},...,k_{rd_{r}},-\left(\sum k_{ij}\right)+2d\right)\right)$
and hence on $H^{0}\left(X_{rig},W^{\dagger}\left(k_{11},...,k_{rd_{r}},-\left(\sum k_{ij}\right)+2d\right)\right)$
(in the sense that its eigenvalues are $p$-integral) and has slope
$0$-eigenvectors on both these spaces. Moreover, shifting the central
character up by $2$ scales $U_{p}$ by $p^{-1}$.\end{prop}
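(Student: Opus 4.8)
\emph{Proof proposal.} The plan is to follow Hida's treatment of optimal $p$-integrality for the $U_p$-operator \cite{Hid}, which carries over to our setting once one uses the description of $U_p$ over the ordinary locus via the canonical subgroup. First I would pass to $X^{ord}_{rig}$ and invoke the identification $U_p\cong\tilde U_p$ together with the isomorphism $Y_{ord}\cong X^{ord}$: in the resulting description, $U_p$ acts on a ``point'' $(A,\omega)$ by $(U_pf)(A,\omega)=f\bigl(\langle p\rangle^{-1}(A/C_A),\omega'\bigr)$, where $A\to A/C_A$ is the degree-$p^d$ map $Fr$ and $\omega'$ is obtained from $\omega$ by inverting the pullback of differentials along the dual isogeny $A/C_A\to A$ (which composes with $A\to A/C_A$ to multiplication by $p$, as $C_A$ is isotropic --- here the Deligne--Pappas condition is used --- and of rank $p^d$ in $A[p]$). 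The geometric input is that, over the ordinary locus, $C_A$ is of multiplicative type, so the dual isogeny $A/C_A\to A$ is finite etale; hence inverting its pullback carries the integral Hodge bundle $\omega_{\mathcal{A}/\mathcal{X}}$, and so every integral automorphic bundle built from it (using the integral structures discussed in \S\ref{sub:2.1 Aut vec bundle}), isomorphically onto itself. Consequently the normalized $U_p$ preserves the relevant integral structure; equivalently, as Hida observes, $U_p$ is --- up to a diamond operator and a normalizing power of $p$ --- expressed through the crystalline Frobenius of $H^1_{rig}(\mathcal{A}^{ord}_{\overline{\mathbb{F}}_p}/\mathcal{X}^{ord}_{\overline{\mathbb{F}}_p})$ and its symmetric-power and BGG summands, which is manifestly integral because $H^1_{rig}$ is an $F$-crystal and not merely an $F$-isocrystal.

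For the slope-$0$ assertion, since $U_p$ is compact on overconvergent forms (\S\ref{sub:U_p}) it suffices, as in Hida's construction of the ordinary projector, to show that the reduction mod $p$ of the normalized $U_p$ is nonzero and not topologically nilpotent: under reduction $Fr$ becomes the relative Frobenius of $X^{ord}_{\overline{\mathbb{F}}_p}$ and the dual isogeny becomes the (etale) relative Verschiebung, so this reduction is, up to multiplication by a power of the Hasse invariant --- a unit on the ordinary locus --- the Frobenius twist $f\mapsto f^{(p)}$, which is injective. Since the overconvergent forms form a subspace of the $p$-adic ones, the statement then holds on both spaces.

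For part 2, and with it the precise identification of the integral normalization, I would argue as follows. The central character $w$ enters $W(k_{11},\dots,k_{rd_r},w)$ only through the twist by $\det^{(w-\sum k_{ij})/2}$, and $\det$ is the Tate twist $\mathbb{Q}_p(1)$ (\S\ref{sub:2.1 Aut vec bundle}); since the $U_p$-isogeny $A\to A/H$ has $p$-power degree with $H$ isotropic --- so that the induced polarization rescales the Weil pairing by $p$ --- $U_p$ scales the $\det$-factor by a power of $p$, and, tracking signs via the Weil pairing and the explicit factors of $p$ of Remark \ref{rem:factors of p in U_p}, one finds that passing from $w$ to $w+2$ scales $U_p$ by $p^{-1}$. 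Matching the moduli-theoretic normalization, which a priori lives on $w=-\sum k_{ij}$, to the automorphic--vector-bundle normalization through the Kodaira--Spencer identification $\Omega^d_X\cong\bigl(\bigotimes_{i,j}\omega_{ij}^2\bigr)\otimes\det^d$ of \S\ref{sub:2.1 Aut vec bundle} then pins down the normalization on which $U_p$ is genuinely $p$-integral with slope-$0$ eigenvectors as precisely $W(k_{11},\dots,k_{rd_r},-\sum k_{ij}+2d)$, the ``usual'' arithmetic one.

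The \emph{main obstacle} is exactly this last paragraph: tracking, with consistent sign conventions, which power of $p$ --- equivalently which central character --- makes $U_p$ integral, by reconciling the three sources of powers of $p$ and of twists (the ``dual isogeny'' factor of Remark \ref{rem:factors of p in U_p}, the Kodaira--Spencer twist by $\Omega^d_X$, and the Tate twist $\det=\mathbb{Q}_p(1)$). The underlying geometric facts --- multiplicativity and isotropy of the canonical subgroup, etaleness of its dual over the ordinary locus, the identification of $Fr\bmod p$ and of the dual isogeny $\bmod p$ with the relative Frobenius and Verschiebung, and the $F$-crystal structure on $H^1_{rig}$ --- are all standard.
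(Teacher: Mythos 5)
Your overall orientation (follow Hida, use the description of $U_p$ over the ordinary locus as a trace of Frobenius up to a diamond operator) is indeed the paper's, but the two steps that carry the actual content are respectively argued incorrectly and left open. First, the proposition asserts \emph{optimal} integrality: $p$-integrality together with slope-zero eigenvectors for the central character $-\left(\sum k_{ij}\right)+2d$, which by the scaling statement differs from the naive moduli-theoretic normalization at $-\sum k_{ij}$ by a factor $p^{d}$. Your geometric input (the isogenies occurring in the transfer are \'etale over the ordinary locus, so each transferred term is integral) only gives integrality of the unnormalized trace, i.e.\ at central character $-\sum k_{ij}$; it says nothing about the divisibility by $p^{d}$ needed to make the normalized operator integral, let alone about it having unit eigenvalues, and this is precisely what you defer as the ``main obstacle''. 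The paper obtains it from Hida's computation in Serre--Tate coordinates \cite{Hid}, the compact-Shimura-variety substitute for the classical $q$-expansion computation $U_p\left(\sum a_nq^n\right)=\sum a_{np}q^n$ in \cite{Col}; appealing to the $F$-crystal structure of $H^1_{rig}$ likewise only yields integrality in \emph{some} normalization, not the asserted one. Second, your mechanism for slope-zero eigenvectors misidentifies the reduction: since $U_p$ is the trace along Frobenius (up to $\left\langle p\right\rangle$), its normalized reduction mod $p$ is a Cartier-type operator with large kernel (the analogue of $\sum a_nq^n\mapsto\sum a_{np}q^n$), \emph{not} the Frobenius twist $f\mapsto f^{(p)}$, which is the reduction of the $Fr$-pullback ($V$-type) operator; so the injectivity argument does not apply, and non-nilpotence of the reduction must again be extracted from the Serre--Tate computation (there being no $q$-expansions or Eisenstein series available here).

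Third, the scaling statement itself, which you need in order to pin down the correct central character, is asserted (``tracking signs via the Weil pairing'') rather than proved; in the paper this is the part proved in detail. One works over $\mathbb{C}$ with the adelic description of forms of weight $(k_{11},\dots,k_{rd_r},w)$, the double coset for $h=\mathrm{diag}(p,1)$, and the explicit isomorphism $\varphi\,:\,W(k_{11},\dots,k_{rd_r},w)\rightarrow W(k_{11},\dots,k_{rd_r},w+2)$ given by $(\varphi(f))(g)=\mathrm{det}(g)^{-1}f(g)$; a one-line computation with the coset sum gives $U_p(\varphi(f))=p\,\varphi(U_pf)$, and this identity is then transported to the rigid ordinary locus by analytic continuation through $\overline{\mathbb{Q}}$-points (the ordinary locus being connected). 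The first part of the proposition is then deduced from this scaling together with the Hida-style Serre--Tate calculation. As it stands, your proposal therefore has genuine gaps exactly at the normalization (the power of $p$, equivalently the shift by $2d$ in the central character) and at the existence of slope-zero vectors.
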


\begin{proof}
As mentioned before the statement of the proposition, the first part
follows by a standard calculation following Hida and the second part. We remark that this calculation is entirely analogous to the standard $q$-expansion calculation, using Serre-Tate coordinates instead of the Tate abelian variety. The proof of the second part is also by a standard calculation. Let us outline the argument. First,
we prove the analogue statement over the complexes. Let $\Gamma=G^{\star}(\mathbb{Q})\cap K$
and let $h=\left(\begin{array}{cc}
p\\
 & 1
\end{array}\right)$. Fix a weight $(k_{11},...,k_{rd_{r}},w)$ and write $\chi=\chi(k_{11},...,k_{rd_{r}},w)$.
We may interpret automorphic forms of level $K$ and weight $(k_{11},...,k_{rd_{r}},w)$
over $\mathbb{C}$ as functions 
\[
f\,:\, G^{\star}(\mathbb{R})\rightarrow\mathbb{C}
\]
satisfying $f(\gamma g)=f(g)$ and $f(gk)=\chi(k)^{-1}f(g)$ for $\gamma\in\Gamma$
and $k\in K_{\infty}$, or equivalently as functions 
\[
\phi\,:\, G^{\star}(\mathbb{A})\rightarrow\mathbb{C}
\]
such that $\phi(\gamma g)=\phi(g)$ and $\phi(gk)=\chi(k_{\infty})^{-1}\phi(g)$
for $\gamma\in G^{\star}(\mathbb{Q})$ and $k\in K$ (plus analytic
conditions that we will not need and therefore not go into). Given
$f$, the associated $\phi$ is defined by $\phi(g)=f(g_{\infty})$.
Note that we may describe local sections of $W(k_{11},...,k_{rd_{r}},w)$
on $X(\mathbb{C})$ by the same equations, restricting the domain
of $f$ to any open $U^{\prime}$ which is the pullback of some analytic
open $U$ under the natural map $G^{\star}(\mathbb{R})\rightarrow X(\mathbb{C})$.
The adelic operator $U_{p}=[KhK]$, which in the classical setting
becomes $[\Gamma h^{-1}\Gamma]$, acts as 
\[
\left(U_{p}f\right)(g)=\sum_{i}f(h^{-1}\gamma_{i}g)
\]
 for some (any) set $\gamma_{1},...,\gamma_{r}$ of coset representatives
of $(\Gamma\cap h\Gamma h^{-1})\backslash\Gamma$. Now consider changing
the weight by a factor of ${\rm det}$, i.e. $(k_{11},...,k_{rd_{r}},w)$
goes to $(k_{11},...,k_{rd_{r}},w+2)$. There is an isomorphism of coherent
sheaves 
\[
\varphi\,:\, W(k_{11},...,k_{rd_{r}},w)\rightarrow W(k_{11},...,k_{rd_{r}},w+2)
\]
(which is valid over the reflex field) defined on local sections by
\[
(\varphi(f))(g)={\rm det}(g)^{-1}f(g)
\]
 Thus we see that 
\[
\left(U_{p}(\varphi(f))\right)(g)={\rm det}(h^{-1})^{-1}{\rm det}(g)^{-1}\sum f(h\gamma_{i}g)=p.\left(\varphi\left(U_{p}f\right)\right)(g)
\]
 which is the result we wanted. Now as this identity holds analytically
over $\mathbb{C}$, it also holds formally around every $\mathbb{C}$-point,
hence formally around every $\overline{\mathbb{Q}}$-point, and hence
rigid analytically in the ordinary locus by the principle of analytic
continuation (the ordinary locus is connected, and contains $\overline{\mathbb{Q}}$-points). \end{proof}

\begin{rem}
\label{rem:after scaling of U_p}1) The choice $\phi(g)=f(g_{\infty})$
is nonstandard (but seems to the author to be a fairly natural choice).
This is what forces $U_{p}$ to become $[\Gamma h^{-1}\Gamma]$ in
the classical setting; it differs from the usual choice using $\left(\begin{array}{cc}
1\\
 & p
\end{array}\right)$ by a central factor of $\left(\begin{array}{cc}
p\\
 & p
\end{array}\right)$, which reflects the fact that we didn't throw in determinant factors
in the equivalence $f\leftrightarrow\phi$.

2) We will also define the action of $Fr$ on $H^{0}\left(X_{rig},W^{\dagger}\left(k_{11},...,k_{rd_{r}},w\right)\right)$
by using the previously defined action on $H^{0}\left(X_{rig},W^{\dagger}\left(k_{11},...,k_{rd_{r}},-\left(\sum k_{ij}\right)+2d\right)\right)$
and declaring that shifting the central character up by $2$ scales
$Fr$ by $p^{-1}$. This corresponds to the interpretation of the
automorphic vector bundle of ${\rm det}$ as the Tate twist $\mathbb{Q}_{p}(1)$.
\end{rem}

We may now prove the analogue of Lemma 6.3 of \cite{Col}.
\begin{cor}
\label{cor:not in the image of theta}Let $k_{i}\geq2$ for all $i$.
If $f\in H^{0}\left(X_{rig},W^{\dagger}\left(k_{11},...,k_{rd_{r}},-\left(\sum k_{ij}\right)+2d\right)\right)$ is a $U_{p}$-eigenform
of slope less than $\inf_{i,j}(k_{ij}-1)$, then $f$ is not in the image
of $\theta$.\end{cor}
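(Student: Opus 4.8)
The plan is to leverage the $p$-scaling behaviour of $\theta$ together with the optimal $p$-integrality of $U_p$ established in Proposition \ref{pro: scaling hecke}. First I would unwind the definition of $\theta = \theta_{(k_{11}-2,\dots,k_{rd_r}-2,k-2d)}$ from Theorem \ref{thm:rig coh as oc aut forms}: it is built out of the differential operators in the dual BGG complex, and its source is the direct sum over $(\epsilon_{ij}) \in W^{(d-1)}$ of spaces $H^0(X_{rig}, W^\dagger(\epsilon_{11}(k_{11}-1)-1,\dots,\epsilon_{rd_r}(k_{rd_r}-1)-1, k-2d)^\vee)$. The key observation is that each such summand, viewed as a space of overconvergent automorphic forms of some weight $(k'_{11},\dots,k'_{rd_r})$ with a specified central character, is related by the canonical isomorphism $\varphi$ (shifting central character by $2$) to the standard space $H^0(X_{rig}, W^\dagger(k'_{11},\dots,k'_{rd_r}, -(\sum k'_{ij})+2d))$ on which $U_p$ is $p$-integral.

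Next I would track the $U_p$-equivariance of $\theta$. Since the dual BGG differentials are Hecke-equivariant, $\theta$ commutes with the Hecke action; however, because the source summands have the ``wrong'' central character relative to the $p$-integral normalization, the factors of $p$ from Proposition \ref{pro: scaling hecke} (shifting central character up by $2$ scales $U_p$ by $p^{-1}$) mean that, after identifying everything with the $p$-integrally-normalized $U_p$, the map $\theta$ intertwines $U_p$ on the target with $p^{c_{ij}} U_p$ on the $(\epsilon_{ij})$-summand of the source, where $c_{ij}$ records the difference in central characters. The point is that for a summand indexed by $(\epsilon_{ij}) \in W^{(d-1)}$, at least one $\epsilon_{ij} = 1$ and hence the source weight $k'_{ij} = k_{ij}-2$ in that slot, making the central character shift — and thus the power of $p$ — strictly positive, of size controlled by $\inf_{i,j}(k_{ij}-1)$. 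Concretely, I expect the relevant power to be exactly such that on the image of each summand $U_p$ acts with all slopes $\geq \inf_{i,j}(k_{ij}-1)$.

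Granting that, the conclusion is immediate: if $f$ lies in the image of $\theta$, write $f = \theta(g)$ with $g = \sum_{(\epsilon_{ij})} g_{(\epsilon_{ij})}$. Decomposing the finite-slope part, each $g_{(\epsilon_{ij})}$ generates (under $U_p$) a subspace on which $U_p$-slopes are $\geq \inf_{i,j}(k_{ij}-1)$, and since $\theta$ is $U_p$-equivariant up to the positive power of $p$, the slopes of $U_p$ on $\theta(g_{(\epsilon_{ij})})$ are also $\geq \inf_{i,j}(k_{ij}-1)$. But $f$ is a $U_p$-eigenform of slope strictly less than $\inf_{i,j}(k_{ij}-1)$, contradicting its membership in a sum of subspaces all of whose $U_p$-slopes are at least that bound. (One should be slightly careful: $U_p$ is a compact operator, so one works with generalized eigenspaces / the slope decomposition à la Coleman, but this is standard and the argument goes through.)

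The main obstacle I anticipate is bookkeeping the exact power of $p$ by which $\theta$ fails to be strictly $U_p$-equivariant — i.e. correctly matching the central-character conventions for each BGG summand against the $p$-integral normalization, and verifying that the minimal such power over all summands in $W^{(d-1)}$ is $\geq \inf_{i,j}(k_{ij}-1)$ rather than something weaker. This requires carefully combining the scaling rule ``shift central character by $2$, scale $U_p$ by $p^{-1}$'' with the explicit weights $\epsilon_{ij}(k_{ij}-1)-1$ appearing in Theorem \ref{thm:rig coh as oc aut forms}, and checking that the $\det$-twists $\otimes \det^{a_{ij}}$ that appeared in the decomposition of $Sym^{k_i-2d_i}(Sd_i)$ do not spoil the count. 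Everything else — $U_p$-equivariance of BGG, $p$-integrality on the standard space, and the slope comparison — is either already proved in the excerpt or formal.
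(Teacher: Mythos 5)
Your proposal is essentially the paper's own proof: the corollary is precisely an application of Proposition \ref{pro: scaling hecke} together with the $U_{p}$-equivariance of $\theta$, comparing the actual central character of each source summand with the one for which $U_{p}$ is $p$-integral. The one substantive step you defer --- the central-character bookkeeping you flag as the main obstacle --- is a one-line computation, and your tentative description of it has a small slip worth correcting. For the length-$(d-1)$ Weyl element with $\epsilon_{ij}=+1$, the weight entry \emph{before} dualizing is $k_{ij}-2$ in that slot and $-k_{i'j'}$ elsewhere; after taking the dual (which is built into the source of Theorem \ref{thm:rig coh as oc aut forms}) the $(i,j)$-summand is $H^{0}\left(X_{rig},W^{\dagger}\left(k_{11},\dots,2-k_{ij},\dots,k_{rd_{r}},-\left(\sum k_{i'j'}\right)+2d\right)\right)$, i.e. weight $2-k_{ij}$ in slot $(i,j)$, weight $k_{i'j'}$ elsewhere, and the \emph{same} central character as the target, so $\theta$ is honestly $U_{p}$-equivariant and no twisted intertwining is needed. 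By Proposition \ref{pro: scaling hecke} the optimal ($p$-integral) central character for this weight is $-\left(2-k_{ij}+\sum_{(i',j')\neq(i,j)}k_{i'j'}\right)+2d=\left(-\sum k_{i'j'}+2d\right)+2(k_{ij}-1)$, exceeding the actual one by $2(k_{ij}-1)$; since shifting the central character up by $2$ scales $U_{p}$ by $p^{-1}$, the $U_{p}$ acting on the $(i,j)$-summand has all eigenvalue valuations at least $k_{ij}-1\geq\inf_{i,j}(k_{ij}-1)$, which is exactly the bound you anticipated. Had one instead read the $+1$-slot weight as $k_{ij}-2$ with the other entries $k_{i'j'}$, the central-character discrepancy would be only $2$ and the argument would fail, so the dualization genuinely matters. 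Your remaining remarks are fine: the $\det^{a_{ij}}$-twists play no role because the corollary concerns the single summand with all $a_{ij}=0$, and passing from ``all slopes on the source are at least the bound'' to ``an eigenform of smaller slope is not in the image'' is the standard slope-decomposition argument for the compact operator $U_{p}$, which the paper leaves implicit.
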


\begin{proof}
Recall that $\theta$ is a $U_{p}$-equivariant map 
\[
\bigoplus_{i,j}H^{0}\left(X_{rig},W^{\dagger}\left(k_{11},...,2-k_{ij},...,k_{rd_{r}},-\left(\sum k_{ij}\right)+2d\right)\right)\,\longrightarrow 
\]
\[
\longrightarrow H^{0}\left(X_{rig},W^{\dagger}\left(k_{11},...,k_{rd_{r}},-\left(\sum k_{ij}\right)+2d\right)\right)
\]
Here the right hand side has the optimal $U_{p}$ whereas, by the previous Proposition, the optimal
$U_{p}$ for weight $(k_{11},...,2-k_{ij},...,k_{rd_{r}})$ occurs with central
character 
\[
-\left(2-k_{ij}+\sum_{(i^{\prime},j^{\prime})\neq (i,j)}k_{i^{\prime}j^{\prime}}\right)+2d=\left(-\left(\sum k_{i^{\prime}j^{\prime}}\right)+2d\right)+2(k_{ij}-1)
\]
Thus $U_{p}$ acting on $H^{0}\left(X_{rig},W^{\dagger}\left(k_{11},...,2-k_{ij},...,k_{rd_{r}},-\left(\sum k_{ij}\right)+2d\right)\right)$
has eigenvalues of valuation $\geq k_{ij}-1$ by the previous Proposition.
This proves the Corollary.
\end{proof}

Thus, again for fixed $(k_{11},...,k_{rd_{r}})$ with $\sum_{j} k_{ij}=k_{i}$, $\sum k_{i}=k$, $H_{rig}^{d}\left(X_{\mathbb{F}_{p}}^{ord}, \mathcal{E}_{k_{1},...,k_{r}}\right)$
has a sub-Hecke module consisting of the overconvergent automorphic
forms of weight $(k_{11},...,k_{rd_{r}},-k+2d)$ of $U_{p}$-slope $<\inf_{i,j}(k_{ij}-1)$.

\subsection{\label{sub:The-excision-sequence}The excision sequence and a slope
criterion \label{sub:3.3 excision}}

The next thing to do is to is to start analyzing $H_{rig}^{d}\left(X_{\mathbb{F}_{p}}^{ord},\mathcal{E}_{k_{1},...,k_{r}}\right)$ using the formalism of rigid cohomology. To simplify notation we will write
$\underline{k}$ for $(k_{1},...,k_{r})$ and we continue to assume $k_{ij}\geq2$ for all $i,j$. The excision
sequence in rigid cohomology gives us a Frobenius-equivariant exact
sequence
\[
...\rightarrow H_{rig}^{d}\left(X_{k_{L}}^{PR},\mathcal{E}_{\underline{k}}\right)\longrightarrow H_{rig}^{d}\left(X_{k_{L}}^{ord},\mathcal{E}_{\underline{k}}\right)\longrightarrow H_{X_{k_{L}}^{ss,PR},rig}^{d+1}\left(X_{k_{L}}^{PR},\mathcal{E}_{\underline{k}}\right)\rightarrow...
\]
Here we have some knowledge of $H_{rig}^{d}\left(X_{k_{L}},\mathcal{E}_{\underline{k}}\right)$
as a Hecke module from comparison theorems and ``classical'' automorphic
methods (Matsushima's formula). The problematic term is the contribution
from $H_{X_{k_{L}}^{ss,PR},rig}^{d+1}\left(X_{k_{L}},\mathcal{E}_{\underline{k}}\right)$.
We will deal with it by bounding its slopes. Before we do this we
simplify it somewhat as follows:

\begin{prop}
There is an isomorphism $H_{X_{k_{L}}^{ss,PR},rig}^{d+1}\left(X_{k_{L}},\mathcal{E}_{\underline{k}}\right)\cong H_{rig}^{d-1}\left(X_{k_{L}}^{ss,PR},\mathcal{E}_{\underline{k}}^{\vee}(d)\right)^{\vee}$ which is Hecke and Frobenius-equivariant (where $(d)$ denotes a Tate
twist by $d$). \end{prop}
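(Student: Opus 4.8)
The plan is to obtain the isomorphism as an instance of Poincaré duality for rigid cohomology with coefficients in overconvergent $F$-isocrystals, applied to the excision triangle for the pair $(X^{ss,PR}_{k_L}, X^{ord}_{k_L})$ inside $X^{PR}_{k_L}$. Throughout write $X = X^{PR}_{k_L}$, which is smooth and proper of dimension $d$ over $k_L$; $Z = X^{ss,PR}_{k_L}$, a closed (in general singular) subscheme of dimension $d-1$; $U = X^{ord}_{k_L}$, its smooth affine open complement; and $\mathcal{E} = \mathcal{E}_{\underline{k}}$, which is a convergent (equivalently, since $X$ is proper, overconvergent) $F$-isocrystal on $X$, restricting on $U$ to the overconvergent $F$-isocrystal considered above and on $Z$ to an $F$-isocrystal on $Z$.

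First I would recall the excision distinguished triangle in rigid cohomology,
\[
R\Gamma_{Z,rig}(X,\mathcal{E}) \longrightarrow R\Gamma_{rig}(X,\mathcal{E}) \longrightarrow R\Gamma_{rig}(U,\mathcal{E}) \xrightarrow{+1},
\]
together with the excision triangle for compactly supported cohomology,
\[
R\Gamma_{rig,c}(U,\mathcal{E}^{\vee}(d)) \longrightarrow R\Gamma_{rig,c}(X,\mathcal{E}^{\vee}(d)) \longrightarrow R\Gamma_{rig,c}(Z,\mathcal{E}^{\vee}(d)) \xrightarrow{+1};
\]
all cohomology groups occurring are finite-dimensional over $L$ by the finiteness theorems for rigid cohomology with $F$-isocrystal coefficients. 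Next I would apply the $L$-linear dual $(-)^{\vee} = \mathrm{RHom}_L(-,L)$ to the first triangle and feed in Poincaré duality: for $X$ smooth and proper, $H^i_{rig}(X,\mathcal{E})^{\vee} \cong H^{2d-i}_{rig}(X,\mathcal{E}^{\vee}(d))$, and for $U$ smooth, $H^i_{rig}(U,\mathcal{E})^{\vee} \cong H^{2d-i}_{rig,c}(U,\mathcal{E}^{\vee}(d))$, both $F$-equivariantly; the twist $(d)$ is precisely the twist in the trace isomorphism $H^{2d}_{rig,c}(X) \cong L(-d)$, which explains its appearance in the statement. The dual of the first triangle then becomes
\[
R\Gamma_{rig,c}(U,\mathcal{E}^{\vee}(d))[2d] \longrightarrow R\Gamma_{rig}(X,\mathcal{E}^{\vee}(d))[2d] \longrightarrow R\Gamma_{Z,rig}(X,\mathcal{E})^{\vee} \xrightarrow{+1},
\]
and, using $R\Gamma_{rig,c}(X,-) = R\Gamma_{rig}(X,-)$ and the standard compatibility of the Poincaré duality pairing with restriction to an open subscheme — which identifies the first arrow here with the $[2d]$-shift of the first arrow of the compactly supported excision triangle — uniqueness of cones gives $R\Gamma_{Z,rig}(X,\mathcal{E})^{\vee} \cong R\Gamma_{rig,c}(Z,\mathcal{E}^{\vee}(d))[2d]$. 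Taking cohomology in degree $-(d+1)$ yields $H^{d+1}_{Z,rig}(X,\mathcal{E})^{\vee} \cong H^{d-1}_{rig,c}(Z,\mathcal{E}^{\vee}(d))$; since $Z$ is proper, $H^{d-1}_{rig,c}(Z,-) = H^{d-1}_{rig}(Z,-)$, and dualizing once more produces the claimed isomorphism.

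For the equivariance assertions I would argue as follows. The prime-to-$p$ Hecke correspondences act on $\mathcal{X}^{PR}$ through pairs of finite étale morphisms which preserve $X^{ord}_{k_L}$ and $X^{ss,PR}_{k_L}$; all three cohomology theories (including the local cohomology), the excision triangles, and the Poincaré duality pairing are functorial for pullback and trace along finite étale maps, so the isomorphism just constructed is automatically compatible with the Hecke action. Frobenius-equivariance is likewise automatic, since every step — the excision triangle, the linear dual, Poincaré duality with its twist $(d)$ — respects the Frobenius structures.

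The hard part will be assembling the correct statement of Poincaré duality for rigid cohomology with coefficients in an overconvergent $F$-isocrystal and, in particular, its naturality with respect to the open immersion $U \hookrightarrow X$, i.e. the compatibility of the duality pairing with the excision/Gysin maps; one must also check that $\mathcal{E}_{\underline{k}}$ and its dual are coefficient objects to which the relevant finiteness and duality theorems apply on each of $X$, $U$ and $Z$. These facts are by now standard — Berthelot for constant coefficients, Kedlaya and others in general — but they must be invoked with some care, especially because $X^{ss,PR}_{k_L}$ is in general singular, so that on $Z$ one is genuinely working with the rigid cohomology of a singular scheme via the frame $X^{ss,PR}_{k_L} \subseteq \hat{\mathcal{X}}^{PR}$ introduced above.
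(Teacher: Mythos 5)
Your overall strategy (Poincaré duality) is the same as the paper's, but the paper invokes duality in exactly the form you are trying to re-derive: the duality theorem for rigid cohomology with supports (\cite{Ked1}, Thm 1.2.3; see \cite{LeSt1}, Cor. 8.3.14 for the Frobenius-equivariant formulation) already gives, for any closed subscheme $Z$ of a smooth variety $X$ of pure dimension $d$ and any overconvergent $F$-isocrystal $\mathcal{E}$, a natural perfect pairing between $H^{i}_{Z,rig}(X,\mathcal{E})$ and $H^{2d-i}_{rig,c}(Z,\mathcal{E}^{\vee}(d))$. Taking $i=d+1$ and using properness of $Z=X^{ss,PR}_{k_L}$ to identify compactly supported with ordinary rigid cohomology, this is literally the proposition; naturality of the pairing (the Hecke action being given by correspondences built from finite étale maps) gives Hecke equivariance, and the cited formulation gives Frobenius equivariance. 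Your detour through the two excision triangles, duality on $X$ and on $U$, and a comparison of cones is therefore not needed.

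More importantly, that detour creates a genuine gap at precisely the point the statement cares about: equivariance. The isomorphism you extract comes from a fill-in between two distinguished triangles (axiom TR3 plus the five lemma). Such fill-ins are neither unique nor functorial, so it is not ``automatic'' that the resulting isomorphism commutes with Frobenius or with the Hecke operators, even though every ingredient entering the two triangles does; and this equivariance is exactly what the rest of the paper uses (Frobenius slope bounds, comparison of Hecke modules). To complete your argument you would have to either prove the fill-in is unique in the relevant degrees (e.g. by vanishing of the appropriate Hom groups) or carry out the cone construction in a category in which the Frobenius and Hecke structures are part of the objects; the far simpler repair is to quote the duality-with-supports statement directly, as the paper does. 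Two smaller points: the compatibility of the duality pairings with the open immersion $U\hookrightarrow X$ (your commuting square), which you rightly flag, also needs a citation rather than being folded into ``standard''; and on $U$ and $Z$ one must use the versions of duality and finiteness valid for smooth non-proper, respectively possibly singular, varieties --- these are available in the references above, but they are where the actual content lies.
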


\begin{proof}
This is Poincare duality, see \cite{Ked1} Thm 1.2.3 and also
\cite{Ked2} section 2.1 or \cite{LeSt1} Corollary 8.3.14 for the Frobenius-equivariant
formulation. Hecke equivariance follows since the Hecke action is
by correspondences.
\end{proof}

We want to bound the range of the slopes of $H_{rig}^{d-1}\left(X_{k_{L}}^{ss,PR},\mathcal{E}_{\underline{k}}^{\vee}(d)\right)^{\vee}$. To do this we will use \S 6.7 of \cite{Ked2}. Since the fibre of $H_{rig}^{1}(A_{k_{L}}^{PR}/X_{k_{L}}^{PR})$
at a closed point $x$ of $X_{k_{L}}$ is simply the rational Dieudonne
module of $A_{k_{L},x}$, we note that the slopes of $H_{rig}^{1}(A_{k_{L}}/X_{k_{L}})$
lie in $[0,1]$ (the definition is in the second paragraph of section 6.7
of \cite{Ked2}; these slopes are ``pointwise slopes''). However,
more importantly for us:

\begin{prop}
\label{pro:positive slope outside ord loc} The slopes
of $\mathcal{E}_{\underline{k}}$ on $X_{k_{L}}^{ss,PR}$
are in $[\lambda,k-2d-\lambda]$, where
\[
\lambda={\rm{inf}}_{i}\left((k_{i}-2d_{i}){\rm{inf}}(1/2,1/d_{i})\right).
\]
Note that $\lambda$ depends on $(k_{1},...,k_{r})$.
\end{prop}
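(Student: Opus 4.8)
The plan is to reduce the slope bound on $\mathcal{E}_{\underline{k}}$ over the non-ordinary locus $X_{k_L}^{ss,PR}$ to a pointwise statement about the Dieudonné module of a non-ordinary abelian variety in our moduli problem, and then feed that into Kedlaya's slope filtration machinery (\S 6.7 of \cite{Ked2}) to control the generic Newton slopes of the isocrystal. First I would recall that $\mathcal{E}_{\underline{k}} = \bigotimes_i \mathrm{Sym}^{k_i - 2d_i}(H^1_{rig}(A^{PR}_{k_L}/X^{PR}_{k_L})_i)$, so its pointwise slopes at a closed point $x$ are determined by the slopes of the rational Dieudonné module $D = H^1_{rig}(A_{k_L}/X_{k_L})_x = \bigoplus_i D_i$ of $A_{k_L,x}$, which carries an $\mathcal{O}_{F,p}$-action and a perfect alternating pairing into $\mathbb{Q}_p(-1)$ (from the polarization and Morita equivalence); the symplectic self-duality forces the slopes of each $D_i$ to be symmetric about $1/2$ and to lie in $[0,1]$. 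The key input is that over $X^{ss,PR}_{k_L}$ the point is \emph{not} ordinary, so at least one of the $D_i$ is not ordinary; since $D_i$ has rank $2$ over $F_{\mathfrak{p}_i} \otimes_{\mathbb{Q}_p}(\text{Frobenius})$ --- more precisely $\dim_{\mathbb{Q}_p} D_i = 2d_i$ with a $\mathbb{Z}$-action --- its non-ordinary Newton polygon over $F_{\mathfrak{p}_i}$ has all slopes equal to $1/2$ (this is the ``$\mathrm{GL}_2$-nature'': a rank $2$ self-dual $F$-isocrystal over a $p$-adic field is either ordinary with slopes $0,1$ or isoclinic of slope $1/2$), so all slopes of $D_i$ lie in $\{1/2\}$ when $i$ is a non-ordinary place and in $\{0,1\}$ otherwise.

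Next I would pass from these pointwise slopes to slopes of $\mathrm{Sym}^{k_i-2d_i}(D_i)$ and of the tensor product $\bigotimes_i$. For a place $i$ that is non-ordinary the symmetric power has all slopes equal to $(k_i - 2d_i)/2$; for an ordinary place the slopes of $\mathrm{Sym}^{k_i-2d_i}(D_i)$ range over $[0, k_i - 2d_i]$. Since at a point of $X^{ss,PR}_{k_L}$ at least one place $i_0$ is non-ordinary, the contribution of that factor pins the corresponding block of slopes, and adding slopes across the tensor factors, together with the overall self-duality of $\mathcal{E}_{\underline k}$ (which makes its slopes symmetric about $(k-2d)/2$), shows that every pointwise slope of $\mathcal{E}_{\underline k}$ on $X^{ss,PR}_{k_L}$ is $\geq (k_{i_0} - 2d_{i_0})/2 \geq \inf_i (k_i - 2d_i)/2$ and, by a more careful bookkeeping using that the Frobenius over $F_{\mathfrak{p}_{i_0}}^{ur}$ only rescales by $e_{i_0}$, that it is $\geq (k_{i_0} - 2d_{i_0})/d_{i_0}$ as well when $d_{i_0}$ is small, giving $\geq \lambda$; symmetry then gives $\leq k - 2d - \lambda$. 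I would state this pointwise bound as a lemma, checking the $\inf(1/2, 1/d_i)$ normalization carefully --- this is where the precise interaction between the $\mathcal{O}_{F_{\mathfrak{p}_i}}$-module structure (ramification $e_i$, residue degree $f_i$) and the Frobenius power in the definition of pointwise slopes enters.

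Finally I would invoke Kedlaya's result (\S 6.7 of \cite{Ked2}, on the behaviour of Newton polygons of $F$-isocrystals under specialization, i.e. semicontinuity of the Newton polygon and the fact that the generic slopes are squeezed between the minimal and maximal pointwise slopes in the relevant sense): since every closed point of $X^{ss,PR}_{k_L}$ has all its pointwise slopes of $\mathcal{E}_{\underline k}$ in $[\lambda, k - 2d - \lambda]$, the generic (and hence all) slopes of the $F$-isocrystal $\mathcal{E}_{\underline k}$ on $X^{ss,PR}_{k_L}$ lie in $[\lambda, k - 2d - \lambda]$. I would note that $X^{ss,PR}_{k_L}$ need not be irreducible, so one should apply this componentwise; this causes no difficulty since the bound is uniform. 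The main obstacle I anticipate is not the specialization step (which is a black-box application of Kedlaya) but rather the bookkeeping in the pointwise lemma: correctly tracking the $\mathcal{O}_{F_{\mathfrak{p}_i}}$-structure on the Dieudonné module, verifying that non-ordinariness at a place $\mathfrak{p}_i$ indeed forces the $\mathfrak{p}_i$-component to be isoclinic of slope $1/2$ (using the rank-$2$, self-dual, $\mathcal{O}_{F_{\mathfrak{p}_i}}$-linear structure of $\mathcal{G}_A[\mathfrak{p}_i^\infty]$ --- essentially a Dieudonné-theoretic computation, possibly most cleanly done via the Deligne--Pappas local model which is where the filtration $F_{i,\sigma}(j)$ controls the Lie algebra), and then getting the normalization of the resulting slope exactly equal to $\inf(1/2,1/d_i)$ rather than off by a factor related to $e_i$ or $f_i$.
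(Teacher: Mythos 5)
There is a genuine gap, and it sits exactly at the point you yourself flagged as needing verification: the claimed dichotomy that a non-ordinary rank-$2$ self-dual Dieudonn\'e module with $\mathcal{O}_{F_{\mathfrak{p}_i}}$-action must be isoclinic of slope $1/2$ is false once $d_i\geq 3$. The correct classification (which is what the paper uses, quoting Theorem 5.2.1 of Goren--Oort for the unramified case and Theorem 9.2 of Andreatta--Goren for the totally ramified case, after first reducing via the proof of Proposition 5.2 of Milne to the statement that $A_x$ is isogenous to the \emph{square} of a non-ordinary abelian variety $A'$ with real multiplication by $F$) is that the $\mathbb{Q}_p$-Frobenius slopes of the $\mathfrak{p}_i$-component are either the pair $a/d_i,\,(d_i-a)/d_i$ with $0\leq a\leq d_i$ an integer, or all equal to $1/2$. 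Non-ordinarity at $\mathfrak{p}_i$ only excludes $a=0,d_i$, so the slopes lie in $\left[\inf(1/2,1/d_i),\,1-\inf(1/2,1/d_i)\right]$ and can genuinely equal $1/d_i$ (e.g.\ slopes $1/3,2/3$ for $d_i=3$). This is precisely why $\lambda$ is defined with the factor $\inf(1/2,1/d_i)$ rather than $1/2$: your argument, if the isoclinic claim held, would prove the stronger bound $(k_{i_0}-2d_{i_0})/2$, and your attempted downward ``correction'' to $(k_{i_0}-2d_{i_0})/d_{i_0}$ via a vague rescaling of Frobenius over $F_{\mathfrak{p}_{i_0}}^{ur}$ by $e_{i_0}$ is not a proof and signals the internal inconsistency. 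The missing ingredients are (i) the isogeny reduction to the RM abelian variety $A'$ (so that the Hilbert-modular Newton polygon classifications apply, with $D(A)_i=D(A')_i^{\oplus 2}$), and (ii) the actual list of admissible Newton polygons from those references; without them the pointwise lower bound $\lambda$ is not established.

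A secondary, minor point: the appeal to Kedlaya's \S 6.7 specialization/semicontinuity machinery is not needed for this Proposition. The ``slopes of $\mathcal{E}_{\underline{k}}$ on $X^{ss,PR}_{k_L}$'' in the statement are the pointwise slopes in the sense of the second paragraph of \S 6.7 of Kedlaya (as the paper notes just before the Proposition), so the result is exactly the pointwise bound plus additivity of slopes under tensor and symmetric powers; Theorem 6.7.1 of Kedlaya enters only in the subsequent theorem bounding the slopes of $H^{d-1}_{rig}\bigl(X^{ss,PR}_{k_L},\mathcal{E}_{\underline{k}}^{\vee}(d)\bigr)^{\vee}$. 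Your use of it here is harmless but superfluous, and does not repair the pointwise gap described above.
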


\begin{proof}
Given a closed point $x$ of $X_{k_{L}}^{ss,PR}$, the corresponding abelian
variety $A_{x}$ is isogenous over $\overline{\mathbb{F}}_{p}$ to
the square of a non-ordinary abelian variety $A^{\prime}$ with real
multiplication by $F$ by the proof of Proposition 5.2 of \cite{Mil79} (the result as stated in \cite{Mil79} requires $F$ unramified at $p$, but this is not used in the proof of the particular fact we need). We may decompose the rational Dieudonne modules $D(A)$ and $D(A^{\prime})$ according to primes above $p$ in $F$:
\[
D(A)=\bigoplus_{i} D(A)_{i}
\]
\[
D(A^{\prime})=\bigoplus_{i} D(A^{\prime})_{i}
\]
We have $D(A)_{i}$=$D(A^{\prime})_{i}^{\oplus 2}$. Each $D(A^{\prime})_{i}$ is a rank $2$ rational Dieudonne module over $F_{\mathfrak{p}_{i}}$, coming from a rank $2$ Dieudonne module over $\mathcal{O}_{F_{\mathfrak{p}_{i}}}$. The slopes of those may be calculated by combing Theorem 5.2.1 \cite{GoOo}, which does the unramified case, and Theorem 9.2 of \cite{AnGo}, which does the totally ramified case (strictly speaking one should perhaps combine their methods, but this only amounts to changing notation in the proofs). The outcome is that $D(A^{\prime})_{i}$ (and hence $D(A)_{i}$) has either two slopes $a/d_{i}$ and $(d_{i}-a)/d_{i}$ (with $a\in \mathbb{Z}$, $0\leq a \leq d_{i}$) or a single slope $1/2$. If $A$, and hence $A^{\prime}$, is non-ordinary, then there exists an $i$ such that the slopes of $D(A^{\prime})_{i}$ are not $0$ and $1$. Define $\lambda (i)={\rm{inf}}(1/2,1/d_{i})$, then the slopes of $D(A^{\prime})_{i}$ are in the interval $[\lambda(i),1-\lambda(i)]$, and hence the slopes of
\[
\bigotimes_{i}Sym^{k_{i}-2d_{i}}D(A)_{i} 
\]
are in $[(k_{i}-2d_{i})\lambda(i),k-2d-(k_{i}-2d_{i})\lambda(i)]$ (since slopes behave additively with respect to tensor operations). Thus we see that the slopes of $\mathcal{E}_{\underline{k}}$ on $X^{ss,PR}_{k_{L}}$ are in $[\lambda,k-2d-\lambda]$, where $\lambda={\rm{inf}} _{i}\left((k_{i}-2d_{i})\lambda(i)\right)$, as desired. \end{proof}

Using this, we are ready to prove the main result of this section.
Recall that a Tate twist by $1$ decreases slopes by $1$ and that dualizing sends a slope to its negative.

\begin{thm}
The slopes of $H_{rig}^{d-1}\left(X_{k_{L}}^{ss,PR},\mathcal{E}_{\underline{k}}^{\vee}(d)\right)^{\vee}$
lie in $\left[\lambda +1,k-d-\lambda\right]$.\end{thm}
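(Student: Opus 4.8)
The plan is to combine the pointwise slope bounds just established with the slope estimates for rigid cohomology from \cite{Ked2} \S 6.7, keeping careful track of the Tate twist and of duality.

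First I would translate the previous Proposition into a statement about the coefficient sheaf $\mathcal{E}_{\underline{k}}^{\vee}(d)$. Since $\mathcal{E}_{\underline{k}}$ has pointwise slopes in $[\lambda,k-2d-\lambda]$ on $X_{k_{L}}^{ss,PR}$, and dualizing negates slopes, $\mathcal{E}_{\underline{k}}^{\vee}$ has pointwise slopes in $[-(k-2d-\lambda),-\lambda]$ there; a Tate twist by $d$ lowers every slope by $d$, so $\mathcal{E}_{\underline{k}}^{\vee}(d)$ has pointwise slopes in $[-(k-d)+\lambda,\,-(d+\lambda)]$ on $X_{k_{L}}^{ss,PR}$. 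These are still ``pointwise slopes'' in the sense of \cite{Ked2} \S 6.7, since $\mathcal{E}_{\underline{k}}^{\vee}(d)$ is a convergent $F$-isocrystal (built from $H^{1}_{rig}(A_{k_{L}}^{PR}/X_{k_{L}}^{PR})$, which is defined on the smooth proper $X_{k_{L}}^{PR}$) and hence an overconvergent one on the proper scheme $X_{k_{L}}^{ss,PR}$.

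Next I would invoke the slope bounds of \cite{Ked2} \S 6.7. The scheme $X_{k_{L}}^{ss,PR}$ is the vanishing locus of the Hasse invariant, a nonzero section of an ample line bundle, inside the $d$-dimensional projective scheme $X_{k_{L}}^{PR}$; hence $X_{k_{L}}^{ss,PR}$ is projective and of dimension $d-1$, and in particular $H_{rig,c}^{d-1}$ and $H_{rig}^{d-1}$ agree on it. The estimates of \emph{loc. cit.} then bound the slopes of $H_{rig}^{d-1}(X_{k_{L}}^{ss,PR},\mathcal{E}_{\underline{k}}^{\vee}(d))$ below by the smallest pointwise slope of $\mathcal{E}_{\underline{k}}^{\vee}(d)$, namely $-(k-d)+\lambda$, and above by the largest pointwise slope plus the dimension, namely $-(d+\lambda)+(d-1)=-\lambda-1$ (note that the cohomological degree $d-1$ here equals $\dim X_{k_{L}}^{ss,PR}$, so it does not matter whether Kedlaya's upper bound is phrased via the dimension or via the cohomological degree). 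Thus the slopes of $H_{rig}^{d-1}(X_{k_{L}}^{ss,PR},\mathcal{E}_{\underline{k}}^{\vee}(d))$ lie in $[-(k-d)+\lambda,\,-\lambda-1]$, and dualizing once more negates slopes, placing those of $H_{rig}^{d-1}(X_{k_{L}}^{ss,PR},\mathcal{E}_{\underline{k}}^{\vee}(d))^{\vee}$ in $[\lambda+1,\,k-d-\lambda]$, which is the assertion.

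The only real obstacle is to cite \cite{Ked2} \S 6.7 in exactly the needed form: one wants the inequality that the cohomology slopes of an overconvergent $F$-isocrystal are controlled from below by the infimum, and from above by the supremum plus the dimension, of its pointwise slopes, applied to a scheme ($X_{k_{L}}^{ss,PR}$) that is proper but possibly singular. The possible singularity of $X_{k_{L}}^{ss,PR}$ is harmless precisely because it is proper: no overconvergence beyond a boundary is required and $H_{rig}=H_{rig,c}$, so there is no need to stratify it or resolve it. One should also note, as already discussed in the paper, that slopes of $F$-isocrystals are insensitive to the base change $k_{L}\hookrightarrow\overline{\mathbb{F}}_{p}$, so the argument may be run geometrically if convenient. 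Everything else is bookkeeping with the two operations ``dualize'' (negate slopes) and ``Tate twist by $d$'' (subtract $d$).
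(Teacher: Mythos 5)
Your proposal is correct and follows essentially the same route as the paper: translate the pointwise bounds on $\mathcal{E}_{\underline{k}}$ into bounds on $\mathcal{E}_{\underline{k}}^{\vee}(d)$ (dual negates slopes, Tate twist by $d$ subtracts $d$), apply the special case of Theorem 6.7.1 of \cite{Ked2} for a proper scheme of pure dimension $d-1$ to get slopes in $[\lambda+d-k,-\lambda-1]$, and dualize once more. Your added remarks on the properness and pure dimension $d-1$ of $X_{k_{L}}^{ss,PR}$ and the irrelevance of its possible singularities are accurate but only make explicit what the paper leaves implicit.
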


\begin{proof}
By the previous Proposition the slopes of $\mathcal{E}_{\underline{k}}$ on $X_{k_{L}}^{ss,PR}$
are in $\left[\lambda,k-2d-\lambda\right]$, so by
the remarks before this Theorem the slopes of $\mathcal{E}_{\underline{k}}^{\vee}(d)$
are in $\left[\lambda +d-k,-\lambda -d\right]$.

Next we apply Theorem 6.7.1 of \cite{Ked2}, a special case of which
says that if $S$ is a proper separated scheme of finite type over
$k_{L}$ of pure dimension $d-1$ and $\mathcal{F}$ is an
overconvergent $F$-isocrystal on $S$ with slopes in $[r,s]$, then
the slopes of $H_{rig}^{d-1}(S,\mathcal{F})$ are in $[r,s+d-1]$.
In our situation this allows us conclude that the slopes of $H_{rig}^{d-1}\left(X_{k_{L}}^{ss,PR},\mathcal{E}_{\underline{k}}^{\vee}(d)\right)$
are in $\left[\lambda +d-k,-\lambda -1\right]$.
Dualizing we see that the slopes of $H_{rig}^{d-1}\left(X_{k_{L}}^{ss,PR},\mathcal{E}_{\underline{k}}^{\vee}(d)\right)^{\vee}$
lie in $\left[\lambda +1,k-d-\lambda\right]$ as
desired. \end{proof}

\begin{cor}
\label{cor:small slope Fr}The slopes of \textup{$H_{rig}^{d}\left(X_{k_{L}}^{ord},\mathcal{E}_{\underline{k}}\right)$
lie in $[0,k-d]$. Thus the part of cohomology with slopes in $\left[0,\lambda +1\right)\cup\left(k-d-\lambda,k-d\right]$
lies in the image of $H_{rig}^{d}\left(X_{k_{L}}^{PR},\mathcal{E}_{\underline{k}}\right)$.}\end{cor}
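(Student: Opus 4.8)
The plan is to combine the excision sequence from the previous subsection with the slope bound just proved for the local cohomology term and a separate slope bound for $H^{d}_{rig}(X^{PR}_{k_{L}},\mathcal{E}_{\underline{k}})$, and then to read off both assertions from the Newton slopes of the three terms appearing in
\[
\cdots\to H^{d}_{rig}(X^{PR}_{k_{L}},\mathcal{E}_{\underline{k}})\overset{\alpha}{\longrightarrow}H^{d}_{rig}(X^{ord}_{k_{L}},\mathcal{E}_{\underline{k}})\overset{\beta}{\longrightarrow}H^{d+1}_{X^{ss,PR}_{k_{L}},rig}(X^{PR}_{k_{L}},\mathcal{E}_{\underline{k}})\to\cdots,
\]
recalling that the maps in this sequence are Frobenius-equivariant.

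First I would bound the slopes of $H^{d}_{rig}(X^{PR}_{k_{L}},\mathcal{E}_{\underline{k}})$, where now $\mathcal{E}_{\underline{k}}=\bigotimes_{i}Sym^{k_{i}-2d_{i}}(H^{1}_{rig}(A^{PR}_{k_{L}}/X^{PR}_{k_{L}})_{i})$. At a closed point $x$ of $X^{PR}_{k_{L}}$ the fibre of $H^{1}_{rig}(A^{PR}_{k_{L}}/X^{PR}_{k_{L}})$ is the rational Dieudonne module of $A_{x}$, so its slopes lie in $[0,1]$; since each $H^{1}_{rig}(A^{PR}_{k_{L}}/X^{PR}_{k_{L}})_{i}$ is a direct summand, slopes behave additively under tensor operations, and $\sum_{i}(k_{i}-2d_{i})=k-2d$, the pointwise slopes of $\mathcal{E}_{\underline{k}}$ on all of $X^{PR}_{k_{L}}$ lie in $[0,k-2d]$. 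Now $X^{PR}_{k_{L}}$ is proper of pure dimension $d$, so the theorem of Kedlaya used in the preceding proof (Theorem 6.7.1 of \cite{Ked2}), applied in cohomological degree $d$, says that an overconvergent $F$-isocrystal with slopes in $[r,s]$ has $H^{d}_{rig}$ with slopes in $[r,s+d]$; with $[r,s]=[0,k-2d]$ this gives that the slopes of $H^{d}_{rig}(X^{PR}_{k_{L}},\mathcal{E}_{\underline{k}})$ lie in $[0,k-d]$.

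Now ${\rm im}\,\alpha$ is a Frobenius-stable subquotient of $H^{d}_{rig}(X^{PR}_{k_{L}},\mathcal{E}_{\underline{k}})$, hence has slopes in $[0,k-d]$, while ${\rm im}\,\beta$ is a Frobenius-stable subquotient of $H^{d+1}_{X^{ss,PR}_{k_{L}},rig}(X^{PR}_{k_{L}},\mathcal{E}_{\underline{k}})$, whose slopes lie in $[\lambda+1,k-d-\lambda]\subseteq[0,k-d]$ by the preceding Theorem (recalling the Poincar\'e-duality identification of $H^{d+1}_{X^{ss,PR}_{k_{L}},rig}(X^{PR}_{k_{L}},\mathcal{E}_{\underline{k}})$ with $H^{d-1}_{rig}(X^{ss,PR}_{k_{L}},\mathcal{E}_{\underline{k}}^{\vee}(d))^{\vee}$; the inclusion holds since $\lambda\geq 0$, as $k_{ij}\geq 2$ forces $k_{i}\geq 2d_{i}$). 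By exactness $H^{d}_{rig}(X^{ord}_{k_{L}},\mathcal{E}_{\underline{k}})$ is a Frobenius-equivariant extension of ${\rm im}\,\beta$ by ${\rm im}\,\alpha=\ker\beta$, so its Newton slopes, counted with multiplicity, are the union of those of ${\rm im}\,\alpha$ and ${\rm im}\,\beta$ and therefore lie in $[0,k-d]$; this is the first assertion. For the second, I would extend the base field until its residue field is algebraically closed (this changes no slopes), so that each $F$-isocrystal in sight has a slope decomposition preserved by Frobenius-equivariant maps; writing $H^{d}_{rig}(X^{ord}_{k_{L}},\mathcal{E}_{\underline{k}})=\bigoplus_{\mu}M_{\mu}$ accordingly, for every $\mu\in[0,\lambda+1)\cup(k-d-\lambda,k-d]$ the slope-$\mu$ part of $H^{d+1}_{X^{ss,PR}_{k_{L}},rig}(X^{PR}_{k_{L}},\mathcal{E}_{\underline{k}})$ vanishes, so $\beta(M_{\mu})=0$ and thus $M_{\mu}\subseteq\ker\beta={\rm im}\,\alpha$. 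Summing over such $\mu$ shows that the part of $H^{d}_{rig}(X^{ord}_{k_{L}},\mathcal{E}_{\underline{k}})$ with slopes in $[0,\lambda+1)\cup(k-d-\lambda,k-d]$ lies in the image of $H^{d}_{rig}(X^{PR}_{k_{L}},\mathcal{E}_{\underline{k}})$.

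The substantive input is the slope bound $[0,k-d]$ for $H^{d}_{rig}(X^{PR}_{k_{L}},\mathcal{E}_{\underline{k}})$: the point to be careful about is that one needs the pointwise slopes of $\mathcal{E}_{\underline{k}}$ controlled over the \emph{whole} proper variety $X^{PR}_{k_{L}}$, not just its non-ordinary locus, and then Kedlaya's cohomological slope bound turns this into the statement about $H^{d}_{rig}$. Everything else is formal: the Frobenius-equivariance of the excision sequence (already recorded) together with additivity of Newton slopes in short exact sequences of $F$-isocrystals. I do not expect any real obstacle beyond keeping track of the slope ranges.
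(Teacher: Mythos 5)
Your argument is correct, and for the second assertion it coincides with the paper's: the slope bound on the local cohomology term from the preceding Theorem forces the slope parts in $\left[0,\lambda+1\right)\cup\left(k-d-\lambda,k-d\right]$ into $\ker\beta={\rm im}\,\alpha$ (your explicit use of the isoclinic decomposition just makes precise what the paper leaves implicit). Where you diverge is the first assertion. The paper gets the bound $[0,k-d]$ on $H_{rig}^{d}\left(X_{k_{L}}^{ord},\mathcal{E}_{\underline{k}}\right)$ in one step, by applying Kedlaya's Theorem 6.7.1 directly to the \emph{smooth, non-proper} ordinary locus (it explicitly notes this is the variant of the special case with ``proper'' replaced by ``smooth''). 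You instead apply Kedlaya's bound to the smooth proper model $X_{k_{L}}^{PR}$ of dimension $d$ in degree $d$, obtaining slopes in $[0,k-d]$ there, and then deduce the bound on the ordinary locus from the Frobenius-equivariant excision sequence together with additivity of Newton slopes in short exact sequences, using that the local cohomology term has slopes in $[\lambda+1,k-d-\lambda]\subseteq[0,k-d]$ (where $\lambda\geq 0$). Your route has the small advantage of only invoking the proper case of Kedlaya's theorem (the same shape of special case already used for $X_{k_{L}}^{ss,PR}$), at the cost of routing the first assertion through the excision sequence and slope additivity; the paper's route is more direct and does not need the slope bound on $H_{rig}^{d}\left(X_{k_{L}}^{PR},\mathcal{E}_{\underline{k}}\right)$ at all. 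Both are valid, and your extra observation that the fibrewise slopes of $\mathcal{E}_{\underline{k}}$ lie in $[0,k-2d]$ over the whole of $X_{k_{L}}^{PR}$ (not just the non-ordinary locus) is exactly the point needed to make your variant work.
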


\begin{proof}
That the slopes of $H_{rig}^{d}\left(X_{k_{L}}^{ord},\mathcal{E}_{\underline{k}}\right)$
lie in $[0,k-d]$ follows from noting that the slopes of $\mathcal{E}_{\underline{k}}$
are in $[0,k-2d]$ (since the slopes of $H_{rig}^{1}(A_{k_{L}}^{PR}/X_{k_{L}}^{PR})$
are in $[0,1]$) and applying Theorem 6.7.1 of \cite{Ked2} (not the
same special case as before, but the same if you replace ``proper''
by ``smooth''). The second part then follows by the Theorem and
the excision sequence, as the part of cohomology with slopes in $\left[0,\lambda +1\right)\cup\left(k-d-\lambda,k-d\right]$
necessarily gets killed when mapped to $H_{X_{k_{L}}^{ss,PR},rig}^{d+1}\left(X_{k_{L}},\mathcal{E}_{\underline{k}}\right)$
and hence lies in the image of $H_{rig}^{d}\left(X_{k_{L}},\mathcal{E}_{\underline{k}}\right)$.
\end{proof}

\subsection{\label{sub:Classicality}Classicality for forms of small slope, the
case of arbitrary $d$ \label{sub: 3.4 main result d>1}}

Throughout this section we encourage the reader to keep part 2) of
Remark \ref{rem:Hecke actions on coh} in mind. Recall the Frobenius
correspondence $Fr$ on $X_{rig}^{ord}$ that we defined in section \ref{sub:U_p},
and that it overconverges. Composing $Fr$ with $U_{p}$ in one way
gives the correspondence 
\[
r=(r_{1},r_{2})\,:\, X_{rig}^{ord}\rightarrow X_{rig}^{ord}\times X_{rig}^{ord}
\]
with $r_{1}=Fr$, $r_{2}=\left\langle p\right\rangle ^{-1}Fr$ (we
define composition of correspondences as in \cite{Lau1} section 1.6). As $\left\langle p\right\rangle ^{\pm1}$ commutes with the
Frobenius morphism we rewrite this correspondence as the composition
of $\left\langle p\right\rangle $ with the correspondence 
\[
r^{\prime}=(r_{1}^{\prime},r_{2}^{\prime})\,:\, X_{rig}^{ord}\rightarrow X_{rig}^{ord}\times X_{rig}^{ord}
\]
with $r_{1}^{\prime}=r_{2}^{\prime}=Fr$. Transferring differentials
as for $Fr$ and $U_{p}$ we deduce that the action of $r^{\prime}$ on $H^{0}\left(X_{rig},W^{\dagger}\left(k_{11},...,k_{rd_{r}},-\left(\sum k_{ij}\right)\right)\right)$
is by $p^{k+d}$ ($p^{k}$ comes from the transfer of differentials, $p^{d}$ is the degree of the morphism $Fr$), and hence acts
on $H^{0}\left(X_{rig},W^{\dagger}\left(k_{11},...,k_{rd_{r}},-\left(\sum k_{ij}\right)+2d\right)\right)$
as $p^{k-d}$ (by Prop. \ref{pro: scaling hecke} and Rem. \ref{rem:after scaling of U_p}).
Hence it acts on $H_{rig}^{d}\left(X_{\mathbb{F}_{p}}^{ord},\mathcal{E}_{\underline{k}}\right)$
by $p^{k-d}$, and therefore $r$ acts by $\left\langle p\right\rangle p^{k-d}$.
Since $H_{rig}^{d}\left(X_{\mathbb{F}_{p}}^{ord},\mathcal{E}_{\underline{k}}\right)$
is finite-dimensional, one-sided inverses are two-sided inverses and
we can conclude that 
\begin{equation}
Fr\circ U_{p}=U_{p}\circ Fr=\left\langle p\right\rangle p^{k-d}\label{eq:FR}
\end{equation}
on $H_{rig}^{d}\left(X_{\mathbb{F}_{p}}^{ord},\mathcal{E}_{\underline{k}}\right)$.
We may conclude that the slopes of $U_{p}$ acting on $H_{rig}^{d}\left(X_{\mathbb{F}_{p}}^{ord},\mathcal{E}_{\underline{k}}\right)$
lie in $[0,k-d]$ (as the eigenvalues of $\left\langle p\right\rangle $
are roots of unity), and we immediately deduce the following Lemma from Corollary \ref{cor:small slope Fr}:

\begin{lem}
\label{lem: slope}The part of $H_{rig}^{d}\left(X_{k_{L}}^{ord},\mathcal{E}_{\underline{k}}\right)$
with $U_{p}$-slope in \textup{$\left[0,\lambda\right)\cup\left(k-d-\lambda -1,k-d\right]$
is in the image of $H_{rig}^{d}\left(X_{k_{L}}^{PR},\mathcal{E}_{\underline{k}}\right)$.}
\end{lem}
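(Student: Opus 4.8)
The plan is to read the lemma off from Corollary \ref{cor:small slope Fr} by converting its statement about Frobenius-slopes into one about $U_p$-slopes via the identity (\ref{eq:FR}). First I would base-change (\ref{eq:FR}) from $\mathbb{Q}_p$ to $L$: since rigid cohomology commutes with finite extension of the base field we have $H_{rig}^d(X_{k_L}^{ord},\mathcal{E}_{\underline{k}})=H_{rig}^d(X_{\mathbb{F}_p}^{ord},\mathcal{E}_{\underline{k}})\otimes_{\mathbb{Q}_p}L$, and all the operators in sight are compatible with this, so the identity $Fr\circ U_p=U_p\circ Fr=\langle p\rangle p^{k-d}$ continues to hold on $H_{rig}^d(X_{k_L}^{ord},\mathcal{E}_{\underline{k}})$. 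Moreover, by part 2) of Remark \ref{rem:Hecke actions on coh}, the slopes of the linear operator $Fr$ on this space coincide with the Frobenius-slopes of the underlying $F$-isocrystal (the linear and semilinear Frobenius agree over $\mathbb{Q}_p$, and slopes are unchanged under extension of the base field), so the ``slopes'' occurring in Corollary \ref{cor:small slope Fr} are precisely the slopes of $Fr$.

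Next, $U_p$, $Fr$ and $\langle p\rangle$ are pairwise commuting endomorphisms of the finite-dimensional $L$-vector space $H_{rig}^d(X_{k_L}^{ord},\mathcal{E}_{\underline{k}})$, and the eigenvalues of $\langle p\rangle$ are roots of unity, hence of valuation $0$. Decomposing the space into the common generalized eigenspaces of $U_p$ and $Fr$, the identity $U_pFr=\langle p\rangle p^{k-d}$ forces the following: on the piece where $U_p$ has slope $\alpha$, the operator $Fr$ has slope $k-d-\alpha$. Consequently the part of $H_{rig}^d(X_{k_L}^{ord},\mathcal{E}_{\underline{k}})$ on which $U_p$ has slope in $[0,\lambda)\cup(k-d-\lambda-1,k-d]$ is exactly the part on which $Fr$ has slope in $(k-d-\lambda,k-d]\cup[0,\lambda+1)$, which is the ``small/large Frobenius-slope'' subspace of Corollary \ref{cor:small slope Fr}. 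That corollary asserts this subspace lies in the image of $H_{rig}^d(X_{k_L}^{PR},\mathcal{E}_{\underline{k}})$, which is the claim; for consistency one notes that the excision map is $U_p$-equivariant, so its image is respected by the $U_p$-slope decomposition.

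I do not expect a substantial obstacle here: as the preceding text indicates, the lemma is essentially immediate once (\ref{eq:FR}) and Corollary \ref{cor:small slope Fr} are in hand. The only points requiring genuine care are the bookkeeping at the endpoints of the intervals --- verifying that the half-open intervals for $U_p$ transform under $\alpha\mapsto k-d-\alpha$ into precisely the half-open intervals of Corollary \ref{cor:small slope Fr} --- and the identification of linear with semilinear Frobenius slopes from Remark \ref{rem:Hecke actions on coh}, which is exactly why the Deligne--Pappas model (giving a $\mathbb{Q}_p$-frame for the ordinary locus) is used.
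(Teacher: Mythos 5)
Your proposal is correct and follows essentially the same route as the paper: the paper deduces Lemma \ref{lem: slope} immediately from Corollary \ref{cor:small slope Fr} via the identity (\ref{eq:FR}), exactly as you do, with the base-change and linear-versus-semilinear Frobenius points being the "implicitly applied" observations of Remark \ref{rem:Hecke actions on coh} 2). Your conversion of the $U_p$-slope intervals into the Frobenius-slope intervals under $\alpha\mapsto k-d-\alpha$ matches the paper's intended deduction (the closing remark about $U_p$-equivariance of the excision map is not needed, since the slope comparison alone identifies the relevant subspaces).
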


From this, our classicality criterion follows. Let us first state
the following simple consequence of Matsushima's formula:

\begin{lem}
\label{lem:Matsushima}The Hecke module $H_{rig}^{d}\left(X_{\overline{\mathbb{F}}_{p}}^{PR},\mathcal{E}_{\underline{k}}\right)$
decomposes as a direct sum of Hecke modules of $K$-fixed vectors
associated to automorphic representations of $G^{\star}$.\end{lem}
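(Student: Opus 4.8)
The plan is to reduce the statement to the classical Matsushima formula for the compact Shimura variety $Sh_K$ through a chain of comparison isomorphisms, tracking Hecke-equivariance at each stage. First I would pass from the smooth proper special fibre to the smooth proper generic fibre. Since $X_{k_L}^{PR}$ is smooth and proper over $k_L$ and $\mathcal{E}_{\underline{k}}$ is cut out of a symmetric power of the convergent $F$-isocrystal $H_{rig}^{1}(A_{k_L}^{PR}/X_{k_L}^{PR})$, which by \S\ref{sub: 3.1 comparison} is realised by $(H_{dR}^{1}(A/X))_{an}$, the comparison between rigid cohomology of a smooth proper $k_L$-scheme and algebraic de Rham cohomology of a smooth proper lift, applied with coefficients via the Gauss--Manin/Leray formalism for the universal abelian variety, yields a Hecke- and Frobenius-equivariant isomorphism
\[
H_{rig}^{d}(X_{k_L}^{PR},\mathcal{E}_{\underline{k}})\;\cong\;H_{dR}^{d}(X_L,\mathcal{V}_{\underline{k}}),
\]
where $\mathcal{V}_{\underline{k}}=\bigotimes_i Sym^{k_i-2d_i}(H_{dR}^{1}(A/X)_{i})$ is the vector bundle with integrable connection on the generic fibre $X_L$ of $\mathcal{X}^{PR}$ realising $\mathcal{E}_{\underline{k}}$ (associated to $\bigotimes_i Sym^{k_i-2d_i}(Sd_i^{\vee})$). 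This is precisely where the Pappas--Rapoport model is needed, as flagged in the introduction; passage from $k_L$ to $\overline{\mathbb{F}}_p$ on the rigid side is handled as in Remark \ref{rem:Hecke actions on coh}.

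Next I would base change to $\mathbb{C}$ using the fixed embeddings $\overline{\mathbb{Q}}\hookrightarrow\mathbb{C}$ and $\overline{\mathbb{Q}}\hookrightarrow\overline{\mathbb{Q}}_p$, descending the coefficient system appropriately. Algebraic-versus-analytic GAGA together with the de Rham comparison theorem for a vector bundle with integrable connection on a smooth proper complex variety gives a Hecke-equivariant isomorphism
\[
H_{dR}^{d}(X_L,\mathcal{V}_{\underline{k}})\otimes_L\mathbb{C}\;\cong\;H^{d}(Sh_K(\mathbb{C}),\mathcal{L}_{\underline{k}}),
\]
where $\mathcal{L}_{\underline{k}}$ is the local system on $Sh_K(\mathbb{C})$ attached to the algebraic representation of $G^{\star}$ underlying $\mathcal{V}_{\underline{k}}$ (a direct sum of the local systems of the $V(k_{11}-2-2a_{11},\dots,k-2d)^{\vee}$, following the decomposition in \S\ref{sub: 3.1 comparison}).

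Finally, since $B$ is a division algebra, $G^{\star}$ is anisotropic modulo its centre over $\mathbb{Q}$ and $Sh_K$ is compact, so the automorphic spectrum is purely discrete and Matsushima's formula (computing $H^{\bullet}$ of the local system via relative Lie algebra cohomology) gives a Hecke-equivariant decomposition
\[
H^{d}(Sh_K(\mathbb{C}),\mathcal{L}_{\underline{k}})\;=\;\bigoplus_{\pi}m(\pi)\,\pi_f^{K}\otimes H^{d}\!\left(\mathfrak{g}^{\star},K_\infty;\pi_\infty\otimes V_{\underline{k}}\right),
\]
the sum over automorphic representations $\pi=\pi_\infty\otimes\pi_f$ of $G^{\star}(\mathbb{A})$ and $m(\pi)$ the multiplicity of $\pi$ in the discrete spectrum. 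Only finitely many $\pi$ contribute since the left-hand side is finite-dimensional, and each relative Lie algebra cohomology space is finite-dimensional; absorbing these finite multiplicity factors yields the asserted direct sum of Hecke modules $\pi_f^{K}$ of $K$-fixed vectors associated to automorphic representations of $G^{\star}$.

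I expect the main obstacle to be keeping everything Hecke-equivariant through the comparison isomorphisms together with the accompanying base-field bookkeeping. Away from $p$ the Hecke operators act by finite etale algebraic correspondences, which all the comparison functors transport compatibly, so that part is routine; the delicate point is the operators at $p$ that act on $H_{rig}^{d}(X^{PR},\mathcal{E}_{\underline{k}})$ (the globally defined correspondence $T_p$ and the diamond operators $\left\langle p\right\rangle^{\pm1}$, from which the $U_p$-action on the ordinary cohomology is recovered via the excision sequence of \S\ref{sub:3.3 excision}): one must check that their crystalline, de Rham and Betti realisations match up and that the normalisations are consistent with the relation between $U_p$ and a Frobenius twist established in \S\ref{sub:U_p} and \S\ref{sub:Classicality} — exactly the circle of model- and base-field issues highlighted in Remark \ref{rem:Hecke actions on coh}, which is why one works with the Pappas--Rapoport model and tracks the base field carefully at each stage. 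A secondary technical point is the coefficient version of the first comparison, which reduces to the constant-coefficient case applied to (opens in) the universal abelian variety together with the compatibility of the two Gauss--Manin connections.
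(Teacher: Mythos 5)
Your proposal is correct and follows essentially the same route as the paper: both reduce the statement, via the chain of comparisons (rigid cohomology on the smooth proper Pappas--Rapoport model, rigid-analytic/algebraic de Rham comparison, and base change to $\mathbb{C}$, with Hecke equivariance coming from the operators being defined by algebraic correspondences over $\mathbb{Q}$), to the classical Matsushima formula of Borel--Wallach for the compact Shimura variety $Sh_K$. The only cosmetic difference is that the paper first decomposes $\mathcal{E}_{\underline{k}}$ into the summands $V^{\dagger}(k_{11}-2-2a_{11},\dots,k-2d)^{\vee}$ before comparing, while you carry the full coefficient sheaf through; your extra worry about the normalisation of $U_{p}$ is not needed for this lemma, since only $T_{p}$ and $\left\langle p\right\rangle^{\pm1}$ act on the cohomology of the proper model and the $U_{p}$-bookkeeping is deferred to the proof of the classicality theorem.
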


\begin{proof}
The direct sum decomposition of $H_{rig}^{d}\left(X_{\overline{\mathbb{F}}_{p}}^{PR},\mathcal{E}_{\underline{k}}\right)$
reduces the question to the same assertion for the $H_{rig}^{d}(X_{\overline{\mathbb{F}}_{p}}^{PR},V^{\dagger}(k_{11},...,k_{rd_{r}},w)^{\vee})$.
Since our Hecke operators are defined over $\mathbb{Q}$, by a sequence
of comparison theorems/definitions (definition of rigid cohomology,
complex and rigid analytic/algebraic comparison of de Rham cohomology
and flat base change) we see that the Hecke modules $H_{rig}^{d}(X_{\overline{\mathbb{F}}_{p}}^{PR},V^{\dagger}(k_{11},...,k_{rd_{r}},w)^{\vee})$
and $H_{dR}^{d}(X(\mathbb{C}),V(k_{11},...,k_{rd_{r}},w)^{\vee})$ arise
as base changes of the same Hecke module over $\mathbb{Q}$. We have Matsushima's formula 
\[
H_{dR}^{d}(X(\mathbb{C}),V(k_{11},...,k_{rd_{r}},w)^{\vee})=\bigoplus_{\pi}m(\pi)\pi_{f}^{K}\otimes H^{d}(\mathfrak{g}^{\star},K_{\infty};\pi_{\infty}\otimes\xi(k_{11},...,k_{rd_{r}},w)^{\vee})
\]
(the standard reference is \cite{BoWa} VII.5.2, see Thm 3.2 of \cite{Yosh}
for the formulation above and some more details) where the summation
is over all irreducible admissible representations of $G^{\star}(\mathbb{A})$,
$m(\pi)$ is the multiplicity of $\pi$ in the appropriate summand
of $L^{2}(G^{\star}(\mathbb{Q})\backslash G^{\star}(\mathbb{A})^{1})$,
$\pi_{f}^{K}$ is the $K$-fixed vectors of the finite part $\pi_{f}$
of $\pi$, $H^{d}(\mathfrak{g}^{\star},K_{\infty};-)$ is $(\mathfrak{g}^{\star},K_{\infty})$-cohomology
with trivial Hecke action and $\xi(k_{11},...,k_{rd_{r}},w)=\left(\bigotimes_{i,j}Sym^{k_{ij}}(Sd_{ij})\right)\otimes det^{\left(w-\sum k_{ij}\right)/2}$.
As $m(\pi).dim\, H^{d}(\mathfrak{g}^{\star},K_{\infty};\pi_{\infty}\otimes\xi(k_{1},...,k_{d},w)^{\vee}=0$
unless $\pi$ is the automorphic representation associated to some
automorphic form of level $K$ and weight $(k_{11},...,k_{rd_{r}},-w)$,
the lemma follows. \end{proof}

\begin{thm}
\label{thm:classicality} a) Let $f$ be an overconvergent
Hecke eigenform for $\mathcal{H}$, of weight $(k_{11},...,k_{rd_{r}})$,
character $\chi$ for the diamond operators and with $U_{p}$-slope
in $\left[0,\lambda\right)\cup\left(k-d-\lambda -1,k-d\right]$,
and assume that it is not in the image of $\theta$. Then its system
of Hecke eigenvalues for $\mathcal{H}$ comes from the $p$-stabilization
of a classical form of level $K$.

b) Assume that $f$ is an overconvergent Hecke eigenform for $\mathcal{H}$,
of weight $(k_{11},...,k_{rd_{r}})$, character $\chi$ for the diamond
operators and with $U_{p}$-slope less than $\inf\left(k_{ij}-1,\lambda\right)$.
Then its system of Hecke eigenvalues for $\mathcal{H}$ comes from
the $p$-stabilization of a classical form of level $K$.\end{thm}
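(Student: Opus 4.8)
The plan is to prove part~(a) by transporting the eigenform $f$ through the comparison isomorphism of Theorem~\ref{thm:rig coh as oc aut forms}, then through the Frobenius-slope estimates of \S\ref{sub:3.3 excision} and the translation~\eqref{eq:FR} between $Fr$ and $U_{p}$, and finally through Matsushima's formula (Lemma~\ref{lem:Matsushima}); part~(b) will then follow formally from~(a) together with Corollary~\ref{cor:not in the image of theta}.

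For~(a), first I would record that, with $U_{p}$ normalised as in Proposition~\ref{pro: scaling hecke} and in the weight conventions fixed after the definition of automorphic forms, an overconvergent Hecke eigenform of weight $(k_{11},\dots,k_{rd_{r}})$ is an element
\[
f\in H^{0}\left(X_{rig},W^{\dagger}(k_{11},\dots,k_{rd_{r}},-k+2d)\right),\qquad k=\textstyle\sum k_{ij}.
\]
Applying Theorem~\ref{thm:rig coh as oc aut forms} to the weight $(k_{11}-2,\dots,k_{rd_{r}}-2,k-2d)$ gives an $\mathcal{H}$-equivariant identification of ${\rm Coker}\,\theta$ with the direct summand $H_{rig}^{d}\left(X_{\overline{\mathbb{F}}_{p}}^{ord},V^{\dagger}(k_{11}-2,\dots,k_{rd_{r}}-2,k-2d)^{\vee}\right)$ of $H_{rig}^{d}\left(X_{\overline{\mathbb{F}}_{p}}^{ord},\mathcal{E}_{\underline{k}}\right)$. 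As $f$ is not in the image of $\theta$, its class $\bar{f}$ in this cokernel is a nonzero $\mathcal{H}$-eigenvector with the same system of Hecke eigenvalues as $f$, so that system occurs in the finite-dimensional Hecke module $H_{rig}^{d}\left(X_{k_{L}}^{ord},\mathcal{E}_{\underline{k}}\right)$ (one may pass freely between base fields by Remark~\ref{rem:Hecke actions on coh}(2)).

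Next I would feed in the slope bounds. Since the $U_{p}$-slope of $\bar{f}$ equals that of $f$, it lies in $[0,\lambda)\cup(k-d-\lambda-1,k-d]$, so Lemma~\ref{lem: slope} places $\bar{f}$ in the image $M$ of the restriction map $H_{rig}^{d}\left(X_{k_{L}}^{PR},\mathcal{E}_{\underline{k}}\right)\to H_{rig}^{d}\left(X_{k_{L}}^{ord},\mathcal{E}_{\underline{k}}\right)$. This map is equivariant for $\mathcal{H}^{p}$, $\langle p\rangle$, $T_{p}$ (all defined at full level at $p$) and for the Frobenius of $\mathcal{E}_{\underline{k}}$; since $Fr$ is a Frobenius lift computing that Frobenius on $X_{k_{L}}^{ord}$ and $T_{p}=U_{p}+Fr$ as operators on $H_{rig}^{d}\left(X_{k_{L}}^{ord},\mathcal{E}_{\underline{k}}\right)$, the submodule $M$ is in fact stable under all of $\mathcal{H}$, and as an $\mathcal{H}^{p}[T_{p},\langle p\rangle]$-module it is a quotient of $H_{rig}^{d}\left(X_{k_{L}}^{PR},\mathcal{E}_{\underline{k}}\right)$. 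A standard localisation argument (using commutativity and finite-dimensionality) then shows that the $\mathcal{H}^{p}[T_{p},\langle p\rangle]$-eigensystem of $\bar{f}$ already occurs in $H_{rig}^{d}\left(X_{k_{L}}^{PR},\mathcal{E}_{\underline{k}}\right)$, hence, after extending scalars (GAGA and flat base change, Remark~\ref{rem:Hecke actions on coh}(2)), in $H_{rig}^{d}\left(X_{\overline{\mathbb{F}}_{p}}^{PR},\mathcal{E}_{\underline{k}}\right)$. Here the $T_{p}$-eigenvalue of $\bar{f}$ is $t_{p}=a_{p}+\chi(p)p^{k-d}a_{p}^{-1}$, where $U_{p}\bar{f}=a_{p}\bar{f}$ (with $a_{p}\neq0$, the slope being finite), $\langle p\rangle\bar{f}=\chi(p)\bar{f}$ and one uses $Fr\circ U_{p}=\langle p\rangle p^{k-d}$ from~\eqref{eq:FR}.

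By Lemma~\ref{lem:Matsushima}, $H_{rig}^{d}\left(X_{\overline{\mathbb{F}}_{p}}^{PR},\mathcal{E}_{\underline{k}}\right)$ is a direct sum of $K$-fixed parts of automorphic representations of $G^{\star}$, nonvanishing of the relevant $(\mathfrak{g}^{\star},K_{\infty})$-cohomology forcing the weight; so there is a classical automorphic form $g$ of level $K$ and weight $(k_{11},\dots,k_{rd_{r}})$ whose $\mathcal{H}^{p}$-eigensystem coincides with that of $f$ and whose $\langle p\rangle$- and $T_{p}$-eigenvalues are $\chi(p)$ and $t_{p}$. Since $a_{p}$ is then a root of $X^{2}-t_{p}X+\chi(p)p^{k-d}$, the system of $\mathcal{H}$-eigenvalues of $f$ is exactly that of a $p$-stabilisation of $g$, which is~(a). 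For~(b), if the $U_{p}$-slope of $f$ is less than $\inf(k_{ij}-1,\lambda)$ then it is less than $\inf_{i,j}(k_{ij}-1)$, so $f$ is not in the image of $\theta$ by Corollary~\ref{cor:not in the image of theta}, and it is nonnegative (Proposition~\ref{pro: scaling hecke}) and less than $\lambda$, hence lies in $[0,\lambda)$; thus~(a) applies. The genuine content having been isolated in \S\ref{sub:3.3 excision}, I expect the only delicate points in writing this out to be the bookkeeping of central characters that makes the $U_{p}$-slope hypothesis match the Frobenius-slope input of Lemma~\ref{lem: slope}, and the verification that $M$ is $U_{p}$-stable together with the passage from $a_{p}$ to $t_{p}$ via $T_{p}=U_{p}+Fr$ and~\eqref{eq:FR}, so that the phrase ``comes from a $p$-stabilisation'' is literally justified.
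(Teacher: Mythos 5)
Your proposal is correct and follows essentially the same route as the paper: identify ${\rm Coker}\,\theta$ with the relevant summand of $H_{rig}^{d}\bigl(X_{\overline{\mathbb{F}}_{p}}^{ord},\mathcal{E}_{\underline{k}}\bigr)$, use Lemma \ref{lem: slope} to land the eigenclass in the image of $H_{rig}^{d}\bigl(X_{\overline{\mathbb{F}}_{p}}^{PR},\mathcal{E}_{\underline{k}}\bigr)$, apply Matsushima's formula, and recover the $U_{p}$-eigenvalue via $T_{p}=U_{p}+Fr$ and equation \eqref{eq:FR} so that it satisfies $x^{2}-T_{p}x+\chi(p)p^{k-d}$. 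The extra details you supply (stability of the image under $\mathcal{H}$, the localisation step extracting the eigensystem from a quotient module, and the explicit nonnegativity of the slope in part (b)) are exactly the points the paper leaves implicit, and they are handled correctly.
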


\begin{proof}
We look here at the direct summand $coker\:\theta_{(k_{11}-2,...,k_{rd_{r}}-2,k-2d)}$
of $H_{rig}^{d}\left(X_{\overline{\mathbb{F}}_{p}}^{ord},\mathcal{E}_{\underline{k}}\right)$.
By Corollary \ref{cor:not in the image of theta} part b) follows
directly from a), so we may focus on a). We assume that $f$ is not
in the image of $\theta$, hence its system of Hecke eigenvalues outside
$p$ occurs in $H_{rig}^{d}\left(X_{\overline{\mathbb{F}}_{p}}^{ord},\mathcal{E}_{\underline{k}}\right)$,
and by Lemma \ref{lem: slope} it comes from $H_{rig}^{d}\left(X_{\overline{\mathbb{F}}_{p}}^{PR},\mathcal{E}_{\underline{k}}\right)$.
Lemma \ref{lem:Matsushima} now gives the theorem for $\mathcal{H}^{p}$.
For $U_{p}$, note that the class of $f$ in $H_{rig}^{d}\left(X_{\overline{\mathbb{F}}_{p}}^{ord},\mathcal{E}_{\underline{k}}\right)$
is also an eigenvector for $Fr$ (by equation \ref{eq:FR}), hence
for $T_{p}$ as $T_{p}=U_{p}+Fr$. Since $H_{rig}^{d}\left(X_{\overline{\mathbb{F}}_{p}}^{PR},\mathcal{E}_{\underline{k}}\right)\rightarrow H_{rig}^{d}\left(X_{\overline{\mathbb{F}}_{p}}^{ord},\mathcal{E}_{\underline{k}}\right)$
is equivariant for $T_{p}$, it follows that the $T_{p}$-eigenvalue
of the class of $f$ is the $T_{p}$-eigenvalue of the associated
classical form $g$ of level $K$, and that its $U_{p}$-eigenvalue
satisfies the $p$-Hecke polynomial of $g$, as $U_{p}$ satisfies
$x^{2}-T_{p}x+\chi(p)p^{k-d}$. Hence the $U_{p}$-eigenvalue of $f$
agrees with that of a $p$-stabilization of $g$, which was what we
wanted to prove.
\end{proof}

\subsection{The case $d=1$; the Hecke modules \textmd{$H_{rig}^{1}(X_{\overline{\mathbb{F}}_{p}}^{ord},V^{\dagger}(k-2,k-2)^{\vee})$
\label{sub:3.5 d=00003D1}}}

For completeness we give a separate treatment of the case $d=1$ in this subsection, where we
can obtain better results by methods similar to those in \cite{Col}. We will drop the superscripts $^{PR}$ and $^{DP}$ since we are in an unramified case and the Pappas-Rapoport and Deligne-Pappas models agree.
Let us first state Theorem \ref{thm:classicality} in the special case when $d=1$. It is reminiscent
of Gouvea's original conjecture for overconvergent modular forms (\cite{Gou},
Conjecture 3) :

\begin{thm}
a) Assume that $f$ is an overconvergent Hecke eigenform for $\mathcal{H}$,
of weight $k$, character $\chi$ for the diamond operators and with
$U_{p}$-slope not equal to $(k-2)/2$, and assume that it is not
in the image of $\theta$. Then its system of Hecke eigenvalues for
$\mathcal{H}$ comes from the $p$-stabilization of a classical form
of level $K$.

b) Assume that $f$ is an overconvergent Hecke eigenform for $\mathcal{H}$,
of weight $k$, character $\chi$ for the diamond operators and with
$U_{p}$-slope not equal to $(k-2)/2$ and less than $k-1$. Then
its system of Hecke eigenvalues for $\mathcal{H}$ comes from the
$p$-stabilization of a classical form of level $K$.
\end{thm}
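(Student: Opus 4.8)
The plan is to obtain both statements by specializing the results of \S\ref{sub:3.3 excision}--\S\ref{sub:Classicality} to $d=1$ (equivalently $F=\mathbb{Q}$, $d_{1}=1$), where the slope bounds proved there become exact. The decisive point is purely numerical: for $d=1$ we have $\lambda=(k-2)\inf(1/2,1)=(k-2)/2$, and moreover $k-d-\lambda-1=(k-2)-(k-2)/2=(k-2)/2=\lambda$, so the ``gap interval'' $[\lambda,\,k-d-\lambda-1]$ appearing in Theorem \ref{thm:classicality} collapses to the single value $(k-2)/2$; that is, $[0,\lambda)\cup(k-d-\lambda-1,k-d]=[0,k-1]\setminus\{(k-2)/2\}$. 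Concretely, Proposition \ref{pro:positive slope outside ord loc} now asserts that $\mathcal{E}_{k}$ is isoclinic of slope $(k-2)/2$ on the $0$-dimensional non-ordinary locus $X^{ss}$ (a non-ordinary abelian variety of our moduli problem has rational Dieudonn\'e module pure of slope $1/2$ when $F=\mathbb{Q}$), so the local cohomology term $H^{d+1}_{X^{ss},rig}(X,\mathcal{E}_{k})\cong H^{d-1}_{rig}(X^{ss},\mathcal{E}_{k}^{\vee}(d))^{\vee}$ is pure of Frobenius-slope $k/2$, which via (\ref{eq:FR}) corresponds to $U_{p}$-slope exactly $(k-2)/2$; in other words, for $d=1$ Lemma \ref{lem: slope} says precisely that the $U_{p}$-slope-$\ne(k-2)/2$ part of $H^{d}_{rig}(X^{ord}_{\overline{\mathbb{F}}_{p}},\mathcal{E}_{k})$ lies in the image of $H^{d}_{rig}(X_{\overline{\mathbb{F}}_{p}},\mathcal{E}_{k})$.

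For part a): by the comparison theorem \ref{thm:rig coh as oc aut forms}, an overconvergent eigenform $f$ of weight $k$ not in the image of $\theta$ has nonzero image $\bar f$ in the direct summand ${\rm Coker}\,\theta_{(k-2,k-2)}$ of $H^{d}_{rig}(X^{ord}_{\overline{\mathbb{F}}_{p}},\mathcal{E}_{k})$, and $\bar f$ is a $U_{p}$-eigenvector with the same eigenvalue as $f$. Its $U_{p}$-slope is $\ne(k-2)/2$ by hypothesis and automatically $\le k-1$ by (\ref{eq:FR}) and Corollary \ref{cor:small slope Fr}, so Lemma \ref{lem: slope} places $\bar f$ in the image of $H^{d}_{rig}(X_{\overline{\mathbb{F}}_{p}},\mathcal{E}_{k})$, whence Matsushima's formula (Lemma \ref{lem:Matsushima}) matches its $\mathcal{H}^{p}$-eigensystem with that of a classical form $g$ of level $K$. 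Since $\bar f$ is also an $Fr$-eigenvector (again by (\ref{eq:FR})) it is a $T_{p}$-eigenvector, $T_{p}=U_{p}+Fr$; comparing $T_{p}$-eigenvalues with $g$ and using that $U_{p}$ satisfies $x^{2}-T_{p}x+\chi(p)p^{k-1}$ on $\bar f$, the $U_{p}$-eigenvalue of $f$ agrees with that of a $p$-stabilization of $g$. This reproduces verbatim the proof of Theorem \ref{thm:classicality}(a) in the case $d=1$. For part b): when $d=1$ one has $\inf_{i,j}(k_{ij}-1)=k-1$, so Corollary \ref{cor:not in the image of theta} shows that a $U_{p}$-eigenform of $U_{p}$-slope strictly less than $k-1$ cannot lie in the image of $\theta$; part a) then applies directly.

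I do not expect a genuine obstacle: the content is entirely contained in \S\ref{sub:3.3 excision}--\S\ref{sub:Classicality}, and the only things needing care are the interval bookkeeping above, the automatic bound $U_{p}$-slope $\le k-1$ coming from (\ref{eq:FR}), and the degenerate weight $k=2$ -- where $\mathcal{E}_{2}$ is the constant isocrystal, and if one runs the excision sequence for the curve $X$ by hand (as Coleman does, the non-ordinary locus being $0$-dimensional) one must remember that $H^{d+1}_{rig}(X,\mathcal{O})$ does not vanish, so the boundary map is not quite surjective although its image still exhausts the slope-$\ne(k-2)/2$ part; for $k\ge 3$ this group vanishes and the computation is unobstructed. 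The point of recording the statement separately is that for $d=1$ it is essentially optimal -- it covers every $U_{p}$-slope $<k-1$ except the self-dual value $(k-2)/2$, in the spirit of Coleman's original theorem -- and it is the natural starting point for the sharper refinements (honest classicality of the form, and descent of the level to $\Gamma_{1}(N)\cap\Gamma_{0}(pq_{1}\cdots q_{r})$) obtained below by a simplified version of Coleman's dimension-counting argument.
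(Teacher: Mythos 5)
Your proposal is correct and is essentially the paper's own route: the paper gives no separate argument for this statement but simply records it as Theorem \ref{thm:classicality} specialized to $d=1$, and your write-up is exactly that specialization spelled out ($\lambda=(k-2)/2=k-d-\lambda-1$, the automatic bound slope $\le k-1$ from (\ref{eq:FR}), and part b) via Corollary \ref{cor:not in the image of theta} since $\inf_{i,j}(k_{ij}-1)=k-1$). Your added remarks about $k=2$ and the non-vanishing of $H^{2}_{rig}(X,\mathcal{O})$ concern the later dimension count for Theorem \ref{thm:classicality, d=00003D1}, not this statement, and are harmless here.
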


We will prove the following stronger theorem,
which is a (slightly weaker) analogue of Corollary 7.2.1 of \cite{Col} (see
Remark \ref{rem: strengthening} for a strengthening of part b) ):

\begin{thm}
\label{thm:classicality, d=00003D1}a) Assume that $f$ is an overconvergent
Hecke eigenform for $\mathcal{H}$, of weight $k$, character $\chi$
for the diamond operators and assume that it is not in the image of
$\theta$. Then its system of Hecke eigenvalues for $\mathcal{H}$
is classical of level $K^{p}K_{0}(p)$.

b) Assume that $f$ is an overconvergent Hecke eigenform for $\mathcal{H}$,
of weight $k$, character $\chi$ for the diamond operators with $U_{p}$-slope
less than $k-1$. Then its system of Hecke eigenvalues is classical
of level $K^{p}K_{0}(p)$. 
\end{thm}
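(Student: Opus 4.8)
The plan is to deduce (b) from (a) via Corollary \ref{cor:not in the image of theta} --- for $d=1$ one has $\inf_{i,j}(k_{ij}-1)=k-1$, so a $U_p$-eigenform of slope $<k-1$ is not in the image of $\theta$ --- and to prove (a) by pinning down the Hecke module $\mathrm{Coker}\,\theta$ completely. Observe first that for $d=1$ the quantity $\lambda$ of Theorem \ref{thm:classicality} equals $(k-2)/2$, and that a $p$-stabilization of a classical form of level $K$ is in particular classical of level $K^pK_0(p)$; hence Theorem \ref{thm:classicality}(a) already gives the conclusion unless the $U_p$-slope of $f$ is exactly $(k-2)/2$, and we may assume this. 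By Theorem \ref{thm:rig coh as oc aut forms} and the construction of the overconvergent dual BGG complexes, the hypothesis that $f$ lies outside the image of $\theta$ yields a nonzero eigenclass $[f]$ for $\mathcal{H}=\mathcal{H}^p[U_p,\langle p\rangle]$ in the summand $H^1_{rig}(X^{ord}_{\overline{\mathbb{F}}_p},V^\dagger(k-2,k-2)^\vee)=\mathrm{Coker}\,\theta_{(k-2,k-2)}$ of $H^1_{rig}(X^{ord}_{\overline{\mathbb{F}}_p},\mathcal{E}_k)$, where $\mathcal{E}_k=\mathrm{Sym}^{k-2}(H^1_{rig}(A/X))$. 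It then suffices to show that every $\mathcal{H}$-eigensystem occurring in $H^1_{rig}(X^{ord}_{\overline{\mathbb{F}}_p},\mathcal{E}_k)$ occurs, with the same $U_p$-eigenvalue, among classical automorphic forms of weight $k$ and level $K^pK_0(p)$.

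Since $p$ is unramified in $F=\mathbb{Q}$, $\mathcal{X}^{PR}=\mathcal{X}^{DP}$ is smooth and proper over $\mathbb{Z}_p$ and $X^{ss}$ is a finite set of supersingular points. The excision sequence of \S\ref{sub:3.3 excision}, together with Poincare duality as used there, then gives a Hecke- and Frobenius-equivariant exact sequence
\[
0\to H^1_{dR}(X_{/\mathbb{Q}_p},\mathcal{E}_k)\to H^1_{rig}(X^{ord}_{\overline{\mathbb{F}}_p},\mathcal{E}_k)\to H^0_{rig}(X^{ss}_{\overline{\mathbb{F}}_p},\mathcal{E}_k^\vee(1))^\vee\to0,
\]
where $H^1_{dR}(X_{/\mathbb{Q}_p},\mathcal{E}_k)=H^1_{rig}(X_{\overline{\mathbb{F}}_p},\mathcal{E}_k)$ (these agree since $X$ has good reduction) and the third term is the dual of the local cohomology $H^2_{X^{ss},rig}(X,\mathcal{E}_k)$; for $k\geq3$ the sequence is short exact since $H^2_{rig}(X,\mathcal{E}_k)$ vanishes ($\mathrm{Sym}^{k-2}$ has no monodromy invariants), as do the local cohomology groups in degrees $\leq1$, and $k=2$ is entirely analogous with a harmless extra term $H^2_{rig}(X,\mathcal{O})=\mathbb{Q}_p(-1)$.

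Now inspect the image of $[f]$ in the third term. If it vanishes, $[f]$ lifts to $H^1_{dR}(X_{/\mathbb{Q}_p},\mathcal{E}_k)=H^1_{rig}(X^{PR}_{\overline{\mathbb{F}}_p},\mathcal{E}_k)$, and we conclude as in the proof of Theorem \ref{thm:classicality}: by Lemma \ref{lem:Matsushima} (with $d=1$) the $\mathcal{H}^p$-eigensystem of $[f]$ is that of a classical weight-$k$ form of level $K$, and, using $T_p=U_p+Fr$ and the relation $U_p^2-T_pU_p+\chi(p)p^{k-1}=0$ on the cohomology (equation \ref{eq:FR}), its $U_p$-eigenvalue is a root of the associated $p$-Hecke polynomial, so its full $\mathcal{H}$-eigensystem is that of a $p$-stabilization and hence classical of level $K^pK_0(p)$. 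If instead the image of $[f]$ is a nonzero $\mathcal{H}$-eigenvector in $H^0_{rig}(X^{ss}_{\overline{\mathbb{F}}_p},\mathcal{E}_k^\vee(1))^\vee$, I would appeal to the classical description of the supersingular points: $X^{ss}_{\overline{\mathbb{F}}_p}$, with its prime-to-$p$ Hecke correspondences and with the coefficient system $\mathcal{E}_k$ (whose fibre at a supersingular point is $\mathrm{Sym}^{k-2}$ of an isocrystal isoclinic of slope $1/2$), is identified via Serre-Tate and Dieudonne theory with a space of automorphic forms on the definite quaternion algebra $B'$ over $\mathbb{Q}$ ramified exactly at $p$, at $\infty$, and at the finite primes where $B$ ramifies; by the Jacquet-Langlands correspondence these match the $\mathcal{H}^p$-eigensystems of automorphic forms on $G^\star$ that are a twist of the Steinberg representation at $p$, which admit a $K_0(p)$-fixed vector and are classical of weight $k$ and level $K^pK_0(p)$. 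For $U_p$, one uses that $H^0_{rig}(X^{ss}_{\overline{\mathbb{F}}_p},\mathcal{E}_k^\vee(1))^\vee$ is isoclinic of Frobenius-slope $k/2$ (from the slope-$1/2$ computation and the Tate twist), so by equation \ref{eq:FR} the $U_p$-slope there is forced to be $(k-2)/2$, matching the Steinberg newforms, the residual sign being pinned down by the Frobenius eigenvalue on the supersingular Dieudonne module; a dimension count via the Eichler mass formula then shows $\dim H^0_{rig}(X^{ss},\mathcal{E}_k^\vee(1))$ equals the dimension of the $p$-new space, so the exact sequence exhausts the classical forms of level $K^pK_0(p)$.

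The main obstacle is this second case: making the identification of the supersingular term with quaternionic automorphic forms precise for the twisted coefficient system $\mathcal{E}_k^\vee(1)$ and the correct auxiliary level, and --- since the statement asks for the whole $\mathcal{H}$-eigensystem, including $U_p$ --- matching the $U_p$-action on this local cohomology group with that on the $p$-new classical forms, i.e. distinguishing the two a priori possible values $\pm p^{(k-2)/2}$ by unwinding the Frobenius structure on the supersingular Dieudonne module. Closer to Coleman's dimension-counting argument, one may alternatively package the whole proof as the statement that the Hecke- and $U_p$-equivariant map $H^1_{dR}(Y_{/\mathbb{Q}_p},\mathcal{E}_k)\to H^1_{rig}(X^{ord}_{\overline{\mathbb{F}}_p},\mathcal{E}_k)$ --- obtained by restricting classical forms of level $K^pK_0(p)$ to the ordinary-multiplicative locus via the overconvergent section $V\to Y_{rig}$ furnished by the canonical subgroup --- is injective with image accounting for both terms of the excision sequence, hence an isomorphism by a dimension count; the hard point is again the part supported near the supersingular locus, where $Y$ has bad reduction, which is precisely where the $p$-new bookkeeping enters.
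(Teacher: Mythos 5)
Your reduction of (b) to (a) via Corollary \ref{cor:not in the image of theta}, and your treatment of the case where $[f]$ lifts to $H_{rig}^{1}(X_{\overline{\mathbb{F}}_{p}},V^{\dagger}(k-2,k-2)^{\vee})$ (Matsushima plus the $T_{p}=U_{p}+Fr$ trick), coincide with the paper. The genuine gap is the other case, which you yourself flag as the main obstacle: when $[f]$ has nonzero image in the local cohomology $H_{X_{\overline{\mathbb{F}}_{p}}^{ss},rig}^{2}$, you propose to identify this group as an $\mathcal{H}$-module with automorphic forms on a definite quaternion algebra via Serre--Tate/Dieudonn\'e theory and Jacquet--Langlands, and then to pin down the $U_{p}$-eigenvalue by computing the Frobenius sign on the supersingular Dieudonn\'e module. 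None of this is carried out, and it is genuinely harder than what is needed: knowing that the Frobenius slope forces $U_{p}$-slope $(k-2)/2$ does not distinguish the eigenvalues $\pm p^{(k-2)/2}$, and tying the prime-to-$p$ eigensystem to a representation with a $K_{0}(p)$-fixed vector would require a twisted-coefficient Jacquet--Langlands-type statement that you do not prove.

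The paper sidesteps all of this by running the comparison in the opposite direction. Classical weight $k$ forms of level $K^{p}K_{0}(p)$ restrict (via the canonical-subgroup section) to overconvergent forms; the $p$-new ones have $U_{p}$-slope $(k-2)/2<k-1$, so by Corollary \ref{cor:not in the image of theta} they are not in the image of $\theta$ and hence inject Hecke-equivariantly --- $U_{p}$ included --- into ${\rm Coker}\,\theta_{(k-2,k-2)}=H_{rig}^{1}(X_{\overline{\mathbb{F}}_{p}}^{ord},V^{\dagger}(k-2,k-2)^{\vee})$; being $p$-new they avoid the image of $H_{rig}^{1}(X_{\overline{\mathbb{F}}_{p}},V^{\dagger}(k-2,k-2)^{\vee})$ and therefore inject into the local cohomology term. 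A dimension count then closes the argument: by Poincar\'e duality the local cohomology term has dimension $SS(k-1)$ (where $SS$ is the number of supersingular points), while the $p$-new space has dimension $(k-1)SS$ for $k\geq3$, resp. $SS-1$ for $k=2$ with the discrepancy absorbed by $H_{rig}^{2}(X_{\overline{\mathbb{F}}_{p}},V^{\dagger}(0,0)^{\vee})\cong\mathbb{Q}_{p}(-1)$; so the injection accounts for everything, and every eigensystem in the supersingular term, with its $U_{p}$-eigenvalue, is that of a $p$-new classical form. Your closing alternative (a dimension count against classical forms of level $K^{p}K_{0}(p)$) is close in spirit to this, but without the injection of the $p$-new forms furnished by Corollary \ref{cor:not in the image of theta} the ``supersingular bookkeeping'' you defer is precisely the step left unproved, so as it stands the proposal does not establish part (a).
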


To do this we will aim directly at the cohomology groups $H_{rig}^{1}(X_{\overline{\mathbb{F}}_{p}}^{ord},V^{\dagger}(k-2,k-2)^{\vee})$
rather than interpreting them as summands of $H_{rig}^{1}\left(X_{\overline{\mathbb{F}}_{p}}^{ord},Sym^{k-2}\left(H_{rig}^{1}(A_{\mathbb{F}_{p}}/X_{\mathbb{F}_{p}})\right)\right)$.
The excision sequence that we are interested in is then 
\[
0\rightarrow H_{rig}^{1}(X_{\overline{\mathbb{F}}_{p}},V^{\dagger}(k-2,k-2)^{\vee})\longrightarrow H_{rig}^{1}(X_{\overline{\mathbb{F}}_{p}}^{ord},V^{\dagger}(k-2,k-2)^{\vee})\longrightarrow
\]
\[
\longrightarrow H_{X_{\overline{\mathbb{F}}_{p}}^{ss},rig}^{2}(X_{\overline{\mathbb{F}}_{p}},V^{\dagger}(k-2,k-2)^{\vee})\rightarrow H_{rig}^{2}(X_{\overline{\mathbb{F}}_{p}},V^{\dagger}(k-2,k-2)^{\vee})\rightarrow0
\]
where the first $0$ is a local $H^{1}$ which vanishes by Poincaré
duality (it corresponds to an $H^{1}$ on $X_{\overline{\mathbb{F}}_{p}}^{ss}$,
which is $0$-dimensional) and the $0$ at the end comes from the fact
that $X_{\overline{\mathbb{F}}_{p}}^{ord}$ is affine and $1$-dimensional
so any $H_{rig}^{2}$ vanishes. Rather than slopes we will analyze
this using some dimension counting analogous to parts of
\cite{Col} sections 5 and 6. The space $H_{rig}^{1}(X_{\overline{\mathbb{F}}_{p}},V^{\dagger}(k-2,k-2)^{\vee})$
looks (as a Hecke module) like two copies of the space of classical
level $K$ automorphic forms, by Matsushima's formula. The Hecke-equivariant
quotient map 
\[
H^{0}(X_{rig},W^{\dagger}(k,-k+2))\rightarrow{\rm Coker}\:\theta_{(k-2,k-2)}=H_{rig}^{1}(X_{\overline{\mathbb{F}}_{p}}^{ord},V^{\dagger}(k-2,k-2)^{\vee})
\]
 injects the space of weight $k$ level $K^{p}K_{0}(p)$ classical
$p$-new forms into $H_{rig}^{1}(X_{\overline{\mathbb{F}}_{p}}^{ord},V^{\dagger}(k-2,k-2)^{\vee})$
(this follows from Cor. \ref{cor:not in the image of theta} since
these $p$-new forms have slope $(k-2)/2$). As they are $p$-new,
they will not be in image of the map $H_{rig}^{1}(X_{\overline{\mathbb{F}}_{p}},V^{\dagger}(k-2,k-2)^{\vee})\longrightarrow H_{rig}^{1}(X_{\overline{\mathbb{F}}_{p}}^{ord},V^{\dagger}(k-2,k-2)^{\vee})$
and hence the space of weight $k$ level $K^{p}K_{0}(p)$ classical
$p$-new forms injects into $H_{X_{\overline{\mathbb{F}}_{p}}^{ss},rig}^{2}(X_{\overline{\mathbb{F}}_{p}},V^{\dagger}(k-2,k-2)^{\vee})$. 

\begin{lem}
\textup{1) Let $k\geq3$. The space of weight $k$ level $K^{p}K_{0}(p)$
classical $p$-new forms has dimension $(k-1)SS$, where $SS$ is
the number of supersingular points on $X_{\overline{\mathbb{F}}_{p}}$. }

2) \textup{The space of weight $2$ level $K^{p}K_{0}(p)$ classical
$p$-new forms has dimension $SS-1$.}\end{lem}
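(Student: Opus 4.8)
The plan is to reduce both statements to Riemann--Roch on the (compact, since $B$ is a division algebra) curves $X$ and $Y$, together with the theory of old and newforms. Recall from Section \ref{sub:2.1 Aut vec bundle} that, as a vector space, the space of weight-$k$ classical forms of level $K$ is $H^{0}(X,\omega^{k})$ and that of level $K^{p}K_{0}(p)$ is $H^{0}(Y,\omega_{Y}^{k})$, where $\omega_{Y}$ is the pullback of $\omega$ along the forgetful map $Y\to X$, which is finite \'etale of degree $p+1$. Kodaira--Spencer gives $\Omega_{X}^{1}\cong\omega^{\otimes2}$ (there are no cusps), so $\deg\omega=g-1$ with $g$ the genus of $X$; Riemann--Hurwitz then yields $g_{Y}-1=(p+1)(g-1)$ for the genus $g_{Y}$ of $Y$, and $\deg\omega_{Y}=(p+1)(g-1)$.

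First I would compute the dimensions of the full spaces of forms. For $k\ge3$ one has $\deg\omega^{k}=k(g-1)>2g-2$, and Serre duality together with $\Omega_{X}^{1}\cong\omega^{\otimes2}$ gives $H^{1}(X,\omega^{k})\cong H^{0}(X,\omega^{2-k})^{\vee}=0$, hence $\dim H^{0}(X,\omega^{k})=(k-1)(g-1)$ and, identically on $Y$, $\dim H^{0}(Y,\omega_{Y}^{k})=(k-1)(p+1)(g-1)$. For $k=2$ one instead gets $\dim H^{0}(X,\Omega_{X}^{1})=g$ and $\dim H^{0}(Y,\Omega_{Y}^{1})=g_{Y}=(p+1)(g-1)+1$. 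The $p$-old subspace in level $K^{p}K_{0}(p)$ is the span of the images of the two degeneracy maps $H^{0}(X,\omega^{k})\to H^{0}(Y,\omega_{Y}^{k})$, and I claim its dimension is exactly $2\dim H^{0}(X,\omega^{k})$: decomposing under the prime-to-$p$ Hecke algebra, every eigensystem occurring in level $K$ comes from an automorphic representation $\pi$ whose component $\pi_{p}$ is an irreducible unramified principal series of $G^{\star}(\mathbb{Q}_{p})={\rm GL}_{2}(\mathbb{Q}_{p})$ --- the one-dimensional automorphic representations $\chi\circ{\rm det}$ cannot contribute in weight $k\ge2$, since the finite-dimensional $\chi_{\infty}\circ{\rm det}$ cannot be the archimedean component of a weight-$k$ form --- and $(\pi_{p})^{K_{0}(p)}$ is two-dimensional with the two degeneracy lifts as a basis, by the standard local theory of old and new vectors for ${\rm GL}_{2}$. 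Subtracting, the space of $p$-new forms has dimension $(k-1)(p+1)(g-1)-2(k-1)(g-1)=(k-1)(p-1)(g-1)$ when $k\ge3$, and $\bigl((p+1)(g-1)+1\bigr)-2\bigl((g-1)+1\bigr)=(p-1)(g-1)-1$ when $k=2$.

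It remains to identify $(p-1)(g-1)$ with $SS$. The Hasse invariant of the one-dimensional $p$-divisible group $\mathcal{G}_{A}$ of Remark \ref{rem : local geometry} (whose square is the abelian-surface Hasse invariant of the paper, using $\wedge^{2}e^{*}\Omega^{1}_{A/X}\cong\omega^{\otimes2}$) is a section of $\omega^{\otimes(p-1)}$ on $X_{\overline{\mathbb{F}}_{p}}$ cutting out the supersingular locus; since the level is neat this locus is reduced, so the section has a simple zero at each of the $SS$ supersingular points and $SS=\deg\omega^{\otimes(p-1)}=(p-1)(g-1)$. Substituting into the two expressions above gives $(k-1)SS$ when $k\ge3$ and $SS-1$ when $k=2$, which is the assertion.

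The step I expect to be the main obstacle is the claim that the $p$-old subspace has dimension exactly $2\dim H^{0}(X,\omega^{k})$, i.e.\ that the two degeneracy maps are jointly injective: this rests on the absence of one-dimensional automorphic representations in weight $\ge2$ and on the two-dimensionality of the $K_{0}(p)$-fixed space of an unramified principal series, points which one should either argue carefully or cite precisely. A secondary issue requiring care is pinning down the order of vanishing of the Hasse invariant, since this is what fixes the exact constant $SS=(p-1)(g-1)$ that distinguishes the desired answer from one twice as large.
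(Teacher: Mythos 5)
Your argument is correct, but it reaches the genus/supersingular count by a genuinely different route than the paper. The paper's proof gets $SS$ into the formula at the very first step: it reads off the genus of $Y$ as $g_{Y}=2g+SS-1$ from the structure of $Y_{\overline{\mathbb{F}}_{p}}$ (a Deligne--Rapoport/Cerednik--Drinfeld-type special fibre consisting of two copies of $X_{\overline{\mathbb{F}}_{p}}$ crossing at the supersingular points), and then only needs Kodaira--Spencer plus Riemann--Roch for the dimension of forms of a given level, together with the ``old forms $=$ two $p$-stabilizations'' count. You instead stay in characteristic zero for the genus computation, using that $Y\to X$ is finite \'etale of degree $p+1$ (which the paper does assert) and Riemann--Hurwitz to get $g_{Y}-1=(p+1)(g-1)$, and you then must separately prove $SS=(p-1)(g-1)$ by computing the degree of the (reduced, Morita-descended) Hasse invariant in $H^{0}(X_{\overline{\mathbb{F}}_{p}},\omega^{\otimes(p-1)})$ and using $\deg\omega=g-1$. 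The two routes are numerically consistent precisely because of this identity, and each buys something: the paper's avoids any statement about the order of vanishing of the Hasse invariant but uses the mod-$p$ geometry of $Y$; yours avoids the bad-reduction model of $Y$ but needs simple vanishing of the Hasse invariant at supersingular points. On that point your justification ``since the level is neat this locus is reduced'' is not an argument---neatness gives representability and smoothness of the moduli problem, not reducedness of the zero locus of $A$; the correct input is the deformation-theoretic fact (Igusa's theorem transported via Serre--Tate and Morita equivalence to the height-$2$, dimension-$1$ group $\mathcal{G}_{A}$, cf. \cite{Kas1}) that the Hasse invariant has a simple zero at each non-ordinary point. You flag this yourself, so it is a citation to supply rather than a gap in the structure of the proof. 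Finally, your careful treatment of the old-form count (joint injectivity of the two degeneracy maps via Jacquet--Langlands and the two-dimensionality of Iwahori-fixed vectors in an unramified principal series) is a correct and more detailed version of what the paper simply asserts as ``each eigenform has two $p$-stabilizations.''
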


\begin{proof}
This is well known, we give a brief indication of the proof. 

1) In general, one shows using the Kodaira-Spencer isomorphism and
the Riemann-Roch theorem that for weight $k\geq3$ and an arbitrary
neat level $K^{\prime}$, the space of weight $k$ and level $K^{\prime}$
classical automorphic forms has dimension $(k-1)(g(X(K^{\prime}))-1)$
where $g(X(K^{\prime}))$ is the genus of the Shimura curve $X(K^{\prime})$
of level $K^{\prime}$. Let $g$ denote the genus of $X$. By looking
at $Y_{\overline{\mathbb{F}}_{p}}$, one sees that the genus of $Y$ is
$2g+SS-1$. Since the dimension of the space of weight $k$ level
$K^{p}K_{0}(p)$ classical $p$-old forms is twice that of the space
weight $k$ level $K$ classical forms (each eigenform has two $p$-stabilizations),
one gets the formula for the $p$-new forms.

2) Kodaira-Spencer shows that the space of weight $2$ and level $K^{\prime}$
classical automorphic forms has dimension $g(X(K^{\prime}))$, hence
the space of weight $2$ level $K^{p}K_{0}(p)$ classical $p$-new
forms has dimension $(2g+SS-1)-2g=SS-1$.\end{proof}

\begin{lem}
$\dim\: H_{X_{\overline{\mathbb{F}}_{p}}^{ss},rig}^{2}(X_{\overline{\mathbb{F}}_{p}},V^{\dagger}(k-2,k-2)^{\vee})=SS(k-1)$
for $k\geq2$.\end{lem}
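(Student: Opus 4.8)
The plan is to identify the local cohomology group with one to which the Gysin isomorphism (relative purity) in rigid cohomology applies directly. Since $d=1$ we have $F=\mathbb{Q}$, the prime $p$ is unramified, and $X_{\overline{\mathbb{F}}_p}$ is a smooth proper curve; moreover the bundle with integrable connection $V(k-2,k-2)^\vee\cong Sym^{k-2}\big(H^1_{dR}(A/X)\big)$ (the relevant rank-$2$ summand) is defined, with no singularities, on all of $X$, and it underlies an $F$-isocrystal $\overline{\mathcal{E}}$ on $X_{\overline{\mathbb{F}}_p}$ — namely $Sym^{k-2}$ of the appropriate summand of $H^1_{rig}(A/X)$ of Section~\ref{sec:3}, realised via analytification and the algebraic--rigid de Rham comparison — whose restriction to $X^{ord}_{\overline{\mathbb{F}}_p}$ is exactly $V^\dagger(k-2,k-2)^\vee$. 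The first step, then, is to observe that the local cohomology term $H^2_{X^{ss}_{\overline{\mathbb{F}}_p},rig}(X_{\overline{\mathbb{F}}_p},V^\dagger(k-2,k-2)^\vee)$ occurring in the excision sequence is simply $H^2_{X^{ss}_{\overline{\mathbb{F}}_p},rig}(X_{\overline{\mathbb{F}}_p},\overline{\mathcal{E}})$, the cohomology of $\overline{\mathcal{E}}$ with support along the closed immersion $X^{ss}_{\overline{\mathbb{F}}_p}\hookrightarrow X_{\overline{\mathbb{F}}_p}$; and $X^{ss}_{\overline{\mathbb{F}}_p}$ is smooth of pure codimension $1$, being a reduced finite scheme of $SS$ points (the Hasse invariant has simple zeros, by Kodaira--Spencer).

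Next I would apply the Gysin isomorphism for rigid cohomology with coefficients in an $F$-isocrystal (purity; see e.g. \cite{LeSt1}, or deduce it from the Poincaré duality already used in this section): for a smooth closed subscheme $Z\hookrightarrow X$ of codimension $c$,
\[
H^{i}_{Z,rig}(X,\overline{\mathcal{E}})\;\cong\;H^{\,i-2c}_{rig}\big(Z,\overline{\mathcal{E}}|_{Z}\big)(-c).
\]
Taking $i=2$, $c=1$ and $Z=X^{ss}_{\overline{\mathbb{F}}_p}$ turns the group in question into $H^0_{rig}\big(X^{ss}_{\overline{\mathbb{F}}_p},\overline{\mathcal{E}}|_{X^{ss}_{\overline{\mathbb{F}}_p}}\big)(-1)$. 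Since $X^{ss}_{\overline{\mathbb{F}}_p}$ is a disjoint union of $SS$ copies of $\mathrm{Spec}\,\overline{\mathbb{F}}_p$ and $\overline{\mathcal{E}}$ has rank $k-1$ (being $Sym^{k-2}$ of a rank-$2$ isocrystal), this $H^0$ is a direct sum of $SS$ copies of a $(k-1)$-dimensional space, and the Tate twist does not affect dimensions. Hence $\dim H^2_{X^{ss}_{\overline{\mathbb{F}}_p},rig}(X_{\overline{\mathbb{F}}_p},V^\dagger(k-2,k-2)^\vee)=SS(k-1)$, as claimed (for $k=2$ the isocrystal has rank $1$ and the count reads $\dim=SS$). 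This route also supplies the Frobenius structure on the group, which is harmless here but reassuring for later use.

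The main obstacle is the first step: confirming that the local cohomology appearing in the excision sequence — a priori attached to the overconvergent sheaf $V^\dagger(k-2,k-2)^\vee$ on the affine $X^{ord}_{\overline{\mathbb{F}}_p}$, and carrying a $^\dagger$ — is genuinely $H^2_{X^{ss},rig}$ of an honest $F$-isocrystal that extends across the supersingular points, with no spurious contribution from "$j^\dagger$". This is precisely where the case $d=1$ is essential: $p$ is unramified, $X_{\overline{\mathbb{F}}_p}$ is smooth and proper, and $Sym^{k-2}H^1_{dR}(A/X)$ extends as a bundle with connection over $X^{ss}_{\overline{\mathbb{F}}_p}$, so that the excision triangle for the pair $X^{ord}_{\overline{\mathbb{F}}_p}\subset X_{\overline{\mathbb{F}}_p}\supset X^{ss}_{\overline{\mathbb{F}}_p}$ may be formed with the global isocrystal $\overline{\mathcal{E}}$ and purity becomes available. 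Once that identification is granted the rest is formal; I would still want to double-check the Tate twist in the Gysin sequence and the identification of the rank-$2$ summand giving $V(k-2,k-2)^\vee$ against Section~\ref{sec:3}, but these are routine.
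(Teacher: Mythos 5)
Your proposal is correct and essentially the paper's argument: the paper identifies $H^{2}_{X^{ss}_{\overline{\mathbb{F}}_p},rig}(X_{\overline{\mathbb{F}}_p},V^{\dagger}(k-2,k-2)^{\vee})$ with $H^{0}_{rig}(X^{ss}_{\overline{\mathbb{F}}_p},V^{\dagger}(k-2,k-4)^{\vee})^{\vee}$ by the Poincar\'e duality proposition already used in Section \ref{sub:3.3 excision}, and then counts $SS$ points times rank $k-1$, which for a zero-dimensional support is the same computation as your Gysin/purity identification $H^{0}_{rig}(X^{ss}_{\overline{\mathbb{F}}_p},\overline{\mathcal{E}}|_{X^{ss}})(-1)$ (indeed, as you note, your purity statement can be deduced from that duality). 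Your extra remarks (simple zeros of the Hasse invariant, extension of the coefficient across the supersingular points) are harmless; only the latter is genuinely relevant, and it is implicit in the paper since the excision sequence is formed with the global isocrystal on $X_{\overline{\mathbb{F}}_p}$.
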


\begin{proof}
By Poincaré duality $H_{X_{\overline{\mathbb{F}}_{p}}^{ss},rig}^{2}(X_{\overline{\mathbb{F}}_{p}},V^{\dagger}(k-2,k-2)^{\vee})=H_{rig}^{0}(X_{\overline{\mathbb{F}}_{p}}^{ss},V^{\dagger}(k-2,k-4)^{\vee})^{\vee}$
so since $X_{\overline{\mathbb{F}}_{p}}^{ss}$ is $SS$
points and $V^{\dagger}(k-2,k-4)^{\vee}$ has rank $k-1$, the formula
follows.
\end{proof}

The last ingredient of our dimension count is 

\begin{lem}
\textup{$H_{rig}^{2}(X_{\overline{\mathbb{F}}_{p}},V^{\dagger}(k-2,k-2)^{\vee})=0$
if $k\geq3$, and the one-dimensional Hecke module corresponding to
Tate twist by $-1$ if $k=2$.}\end{lem}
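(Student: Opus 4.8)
The plan is to reduce, in both cases, to the cohomology of the smooth proper geometrically connected curve $X_{\overline{\mathbb{F}}_{p}}$ via the chain of comparison isomorphisms already exploited in the proof of Lemma \ref{lem:Matsushima}. For $k\geq 3$ I would argue as follows. Since $X_{\overline{\mathbb{F}}_{p}}$ is proper, the overconvergent $F$-isocrystal $V^{\dagger}(k-2,k-2)^{\vee}$ on it is convergent, so by rigid-analytic GAGA and the comparison between algebraic and rigid-analytic de Rham cohomology (\cite{AnBa} Thm. IV.4.1), exactly as in Lemma \ref{lem:Matsushima}, the Hecke module $H_{rig}^{2}(X_{\overline{\mathbb{F}}_{p}},V^{\dagger}(k-2,k-2)^{\vee})$ is a base change of $H_{dR}^{2}(X(\mathbb{C}),V(k-2,k-2)^{\vee})$. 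By Matsushima's formula (in degree $2$) this is $\bigoplus_{\pi}m(\pi)\,\pi_{f}^{K}\otimes H^{2}(\mathfrak{g}^{\star},K_{\infty};\pi_{\infty}\otimes\xi^{\vee})$ with $\xi=Sym^{k-2}(Sd_{11})$, which as a $G^{der}$-representation is the $(k-1)$-dimensional irreducible, hence irreducible and \emph{nontrivial} precisely because $k\geq 3$. Since $d=1$, $G^{\star}(\mathbb{R})$ has $\mathbb{R}$-rank one (compact dual $\mathbb{P}^{1}$), so $(\mathfrak{g}^{\star},K_{\infty})$-cohomology with nontrivial irreducible coefficients is concentrated in degree $1$ (\cite{BoWa}); every summand therefore vanishes and $H_{rig}^{2}=0$. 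Equivalently, one may use Poincaré duality on $X_{\overline{\mathbb{F}}_{p}}$ (\cite{Ked1} Thm.~1.2.3, or \cite{LeSt1} Cor.~8.3.14 for the Frobenius-equivariant version) to identify $H_{rig}^{2}$ with the linear dual, up to a Tate twist, of $H_{rig}^{0}(X_{\overline{\mathbb{F}}_{p}},V^{\dagger}(k-2,k-2))$, and note that this $H^{0}$ consists of global horizontal sections, i.e. of monodromy invariants, which vanish because $X_{\overline{\mathbb{F}}_{p}}$ is geometrically connected and $Sym^{k-2}$ of the universal rank-two isocrystal has no invariants for $k\geq 3$.

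For $k=2$ the coefficient bundle $V^{\dagger}(0,0)^{\vee}=V^{\dagger}(0,0)$ is the structure sheaf (the trivial representation), so the group in question is simply $H_{rig}^{2}(X_{\overline{\mathbb{F}}_{p}}/\overline{\mathbb{F}}_{p})$. As $X_{\overline{\mathbb{F}}_{p}}$ is a geometrically connected smooth proper curve over $\overline{\mathbb{F}}_{p}$, this is one-dimensional, and Poincaré duality against $H_{rig}^{0}(X_{\overline{\mathbb{F}}_{p}}/\overline{\mathbb{F}}_{p})=\mathbb{Q}_{p}$ identifies it, as an $F$-isocrystal, with $\mathbb{Q}_{p}(-1)$ (the fundamental class); the Hecke algebra acts through the induced action on this line, which is what the statement means by ``the one-dimensional Hecke module corresponding to Tate twist by $-1$''.

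I do not anticipate a genuine obstacle: this is the $d=1$ analogue of the standard computation of $H^{2}$ of a modular (or Shimura) curve with coefficients in $Sym^{k-2}$ of the universal local system. The only points needing care are the Tate-twist bookkeeping in Poincaré duality (which affects neither input, since irreducibility and nontriviality of the underlying $G^{der}$-representation are insensitive to twisting by $\det$, and the $k=2$ twist is pinned down by the fundamental-class computation) and the verification — carried out exactly as in Lemma \ref{lem:Matsushima} — that the reduction to complex de Rham cohomology is Hecke-equivariant.
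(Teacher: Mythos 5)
Your argument is correct and is essentially the paper's own: the paper disposes of this lemma in one line by appealing to Matsushima's formula (or classical methods such as degeneration of the BGG spectral sequence), which is exactly the route you take, with the $(\mathfrak{g}^{\star},K_{\infty})$-cohomology vanishing in degree $2$ for nontrivial coefficients and the fundamental class giving the Tate twist by $-1$ when $k=2$. Your Poincar\'e-duality variant is a harmless additional check, not a different method.
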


\begin{proof}
This follows by Matsushima's formula or other ``classical methods''
(e.g. degeneration of the BGG spectral sequence).
\end{proof}

Adding up the dimensions in the previous lemmas we see that as Hecke
modules, 
\[
H_{rig}^{1}(X_{\overline{\mathbb{F}}_{p}}^{ord},V^{\dagger}(k-2,k-2)^{\vee})=H^{0}(Y,W(k,2-k))
\]
Here we are using that image of the injection $H_{rig}^{1}(X_{\overline{\mathbb{F}}_{p}},V^{\dagger}(k-2,k-2)^{\vee})\longrightarrow H_{rig}^{1}(X_{\overline{\mathbb{F}}_{p}}^{ord},V^{\dagger}(k-2,k-2)^{\vee})$
is, as a Hecke module, the space of $p$-old forms inside $H^{0}(Y,W(k,2-k))$.
For $\mathcal{H}^{p}$ and $\left\langle p\right\rangle $ this follows
from equivariance, for $U_{p}$ this uses the same trick as in the
proof of Theorem \ref{thm:classicality}. Thus Theorem \ref{thm:classicality, d=00003D1}
follows. 
\begin{rem}
\label{rem: strengthening}The fact that these two are equal as Hecke
modules does not mean that the composition 
\[
H^{0}(Y,W(k,2-k))\hookrightarrow H^{0}(X_{rig},W^{\dagger}(k,2-k))\twoheadrightarrow coker\:\theta_{(k-2,k-2)}
\]
is an isomorphism. In the modular curve case, Coleman (\cite{Col})
shows the equality of Hecke modules as above but also that the composition
above is not an isomorphism. However, by Corollary \ref{cor:not in the image of theta},
the composition is an injection, and hence an isomorphism, on slope
$<k-1$ parts. This allows one to strengthen Theorem \ref{thm:classicality, d=00003D1}
b) to assert that $f$ itself is classical (of level $K^{p}K_{0}(p)$).
Corollary 7.2.1 of \cite{Col} asserts that the analogous strengthening
of a) is true in the case of modular curve. However, we cannot prove
it by the same technique as we do not have $q$-expansions, and as
a result do not know multiplicity $1$ for overconvergent automorphic
forms. \end{rem}


\begin{thebibliography}{}
\bibitem[1]{AnBa} Andre, Y., Baldassarri, F. De Rham Cohomology
of Differential Modules on Algebraic Varieties, Prepublication Institut
de Mathematiques de Jussieu 184.

\bibitem[2]{AnGa} Andreatta, F., Gasbarri, C. : The canonical subgroup for families of abelian varieties. Composition Math. 143(3), 566-602 (2007)

\bibitem[3]{AnGo} Andreatta, F., Goren, E. Z. : Geometry of Hilbert modular varieties over totally ramified primes. Internat. Math. Res. Not. 33, 1785-1835 (2003)

\bibitem[4]{AIP} Andreatta, F., Iovita, A., Pilloni, V. : $p$-adic
families of Siegel modular forms. Preprint, available at http://perso.ens-lyon.fr/vincent.pilloni/

\bibitem[5]{BGG} Bernstein, I. N., Gelfand, I. M., Gelfand, S.
I. : Differential operators on the base affine space and a study of
$\mathfrak{g}$-modules. In ``Lie groups and their representations'',
Summer School of the Bolyai Janos Mathematical Society, (Budapest,
1971), Adam Hilger Ltd., London (1975)

\bibitem[6]{BoWa} Borel, A., Wallach, N. Continuous cohomology,
Discrete Subgroups and Representation theory of Reductive Groups.
Second edition. Amer. Math. Soc. (1999) 

\bibitem[7]{Bou} Boutot, J. F. : Varietes de Shimura: Le probleme de modules en inegale caracteristique. In "Varietes de Shimura et fonctions L", Publ. Math. Univ. Paris VII 6 (1979)

\bibitem[8]{Buzz} Buzzard, K. M. : Eigenvarieties. In ``L-functions
and Galois representations'', 59-120, London Math. Soc. Lecture Note
Ser., 320, Cambridge Univ. Press (2007)

\bibitem[9]{BuTa} Buzzard, K. M., Taylor, R. L. : Companion forms
and weight 1 forms. Annals of Math. Vol. 149, 905-919 (1999)

\bibitem[10]{ChFa} Chai, C-L., Faltings, G. : Degenerations of
Abelian varieties. Ergebnisse der Mathematik und ihrer Grenzgebiete,
3. Folge, vol. 22, Springer-Verlag (1990)

\bibitem[11]{Che} Chenevier, G. : Familles p-adiques de formes automorphes
pour GL(n). Journal fur die reine und angewandte Mathematik 570, 143-217,
(2004)

\bibitem[12]{Col} Coleman, R. F. : Classical and overconvergent
modular forms. Inventiones Math. 124, 215-241 (1996)

\bibitem[13]{DePa} Deligne, P., Pappas, G. : Singularites des espaces de modules de Hilbert, en les caracteristiques divisant le discriminant. Compositio Math. 90, 59-79 (1994)

\bibitem[14]{Eme} Emerton, M. : On the interpolation of systems
of eigenvalues attached to automorphic Hecke eigenforms. Inventiones
Math. 164, no. 1, 1-84 (2006)

\bibitem[15]{Fal} Faltings, G. : On the Cohomology of Locally Symmetric
Hermitian Spaces. Lecture Notes in Mathematics 1029, 55-98 (1983)

\bibitem[16]{GoKa} Goren, E.Z., Kassaei, P. L. Canonical subgroups
over Hilbert modular varieties. To appear in Journal fur die reine und angewandte Mathematik. Available at http://www.mth.kcl.ac.uk/\textasciitilde{}kassaei/research/files/cshmv.pdf

\bibitem[17]{GoOo} Goren, E. Z., Oort, F. Stratifications of Hilbert
modular varieties. J. Algebraic Geometry 9, 111-154 (2000) 

\bibitem[18]{Gou} Gouvea, F. Q. Continuity properties of Modular
forms, Elliptic Curves and related topics, CRM Proceedings and Lecture
Notes, AMS 4, 85-99 (1994)

\bibitem[19]{HaTa} Harris, M., Taylor, R. L. : The Geometry and
Cohomology of Some Simple Shimura Varieties, Annals of Math. Studies
151, Princeton Univ. Press (2001)

\bibitem[20]{Hid} Hida, H. Control Theorems of coherent sheaves
on Shimura varieties of PEL type. J. Inst. of Math. Jussieu 1, 1-76 (2002)

\bibitem[21]{Hum} Humphreys, J. E. : Representations of Semisimple
Lie Algebras in the BGG Category $\mathcal{O}$. Grad. Stud. Math.,
94, Amer. Math. Soc. (2008) 

\bibitem[22]{Kas1} Kassaei, P. L. $p$-adic modular forms over
Shimura curves over $\mathbb{Q}$. PhD Thesis, Massachusetts Institute
of Technology (1999). Available at http://www.mth.kcl.ac.uk/\textasciitilde{}kassaei/research.html

\bibitem[23]{Kas2} Kassaei, P. L. : A Gluing Lemma And Overconvergent
Modular Forms. Duke Math. Journal 132 (3), 509-529 (2006)

\bibitem[24]{Ked1} Kedlaya, K. S. : Finiteness in rigid cohomology.
Duke Math. Journal 134, 15-97 (2006)

\bibitem[25]{Ked2} Kedlaya, K. S. : Fourier transforms and ``$p$-adic
Weil II''. Compositio Mathematica 142, 1426-1450 (2006)

\bibitem[26]{Kis} Kisin, M. : Overconvergent modular forms and the
Fontaine-Mazur conjecture. Inventiones Math. 153 (2), 373-454 (2003)

\bibitem[27]{KiLa} Kisin, M. Lai, K. F. Overconvergent Hilbert
Modular Forms. Amer. J. Math 127, 735-783 (2005)

\bibitem[28]{Kott} Kottwitz. Points on Shimura varieties over finite
fields. J. AMS 5, 373-444 (1992)

\bibitem[29]{Lan} Lan, K-W. : Arithmetic compactifications of PEL-type
Shimura varieties. Ph.D. thesis, Harvard University (2008)

\bibitem[30]{LS} Lan, K-W., Suh, J. : Vanishing theorems for torsion automorphic sheaves on compact PEL-type Shimura varieties. Duke Math. J. 161 , no. 6, 1113-1170 (2012)

\bibitem[31]{Lau1} Laumon, G. : Cohomology of Drinfel'd modular
varieties I. Cambridge University Press (1996)

\bibitem[32]{LaPo} Lan, K-W., Polo, P. : Dual BGG complexes for
automorphic bundles. Preprint, available at http://www.math.princeton.edu/\textasciitilde{}klan/academic.html

\bibitem[33]{LeSt1} Le Stum, B. : Rigid cohomology, volume 172
of Cambridge Tracts in Mathematics. Cambridge University Press, Cambridge
(2007)

\bibitem[34]{LeSt2} Le Stum, B. : The overconvergent site. To
appear in Memoire de la SMF (2012). Available at http://perso.univ-rennes1.fr/bernard.le-stum/Publications.html

\bibitem[35]{Loe} Loeffler, D. : Overconvergent algebraic automorphic
forms, Proc. London Math. Soc. 102, no. 2, 193-228 (2011))

\bibitem[36]{Mil79} Milne, J. S. Points on Shimura varieties mod
$p$. Proceedings of Symposia in Pure Mathematics, Vol. 33, part 2,
165-184 (1979)

\bibitem[37]{Mil} Milne, J. S. : Canonical Models of (Mixed) Shimura
varieties and Automorphic Vector Bundles. In ``Automorphic Forms,
Shimura Varieties, and L-functions'', (Proceedings of a Conference
held at the University of Michigan, Ann Arbor, July 6-16, 1988), p.
283-414. Also available at http://www.jmilne.org/math/articles/

\bibitem[38]{Mil05} Milne, J. S. Introduction to Shimura varieties.
In ``Harmonic Analysis, the Trace Formula and Shimura Varieties''
(James Arthur, Robert Kottwitz, Editors) AMS (2005) Also available
at http://www.jmilne.org/math/articles/

\bibitem[39]{MoTa} Mok, C-P., Tan, F. : Overconvergent family of
Siegel-Hilbert modular forms. Preprint, available at https://www.math.mcmaster.ca/\textasciitilde{}cpmok/

\bibitem[40]{Nak} Nakamura, K. : Classification of split trianguline representations of $p$-adic fields. Composition Mathematica, vol. 145, no. 4, 865-914 (2009)

\bibitem[41]{Pill} Pilloni, V. Prolongement analytique sur les
varietes de Siegel. Duke Math Journal, vol. 157, No. 1, 167-222 (2011)

\bibitem[42]{PS1} Pilloni, V., Stroh, B. : Surconvergence et classicite
: le cas Hilbert. Preprint, available at http://perso.ens-lyon.fr/vincent.pilloni/

\bibitem[43]{PS2} Pilloni, V., Stroh, B. : Surconvergence et classicite
: le cas deploye. Preprint, available at http://perso.ens-lyon.fr/vincent.pilloni/

\bibitem[44]{Sas} Sasaki, S. : Integral models of Hilbert modular varieties in the ramified case, deformations of modular Galois representation, and weight one forms. Preprint, available at http://www.cantabgold.net/users/s.sasaki.03/

\bibitem[45]{Shin} Shin, S. W. : On the cohomology of Rapoport-Zink
spaces of EL-type, to appear in Amer. J. Math., available at http://math.mit.edu/\textasciitilde{}swshin/

\bibitem[46]{TaYo} Taylor, R. L., Yoshida, T. : Compatibility of
local and global Langlands correspondences. J. Amer. Math. Soc., 20-2, 467-493
(2007)

\bibitem[47]{Tian} Tian, Y. : Classicality of overconvergent Hilbert
eigenforms: Case of quadratic residue degree. Preprint, available
at http://arxiv.org/abs/1104.4583

\bibitem[48]{TiXi} Tian, Y., Xiao, L. : $p$-adic cohomology and classicality of overconvergent Hilbert modular forms, available at http://math.uchicago.edu/~lxiao

\bibitem[49]{Tzu} Tzusuki, N. : On Base Change Theorem and Coherence
in Rigid Cohomology. Documenta Mathematica, Extra vol., 891-918 (2003) 

\bibitem[50]{Urb} Urban, E. : Eigenvarieties for Reductive Groups.
Annals of Mathematics, vol. 174, no. 3, 1685-1784 (2011)

\bibitem[51]{Yosh} Yoshida, T. Betti Cohomology of Shimura varieties
- the Matsushima formula. Notes, available at http://www.dpmms.cam.ac.uk/\textasciitilde{}ty245/2008\_AGR\_Fall/2008\_agr\_week2.pdf\end{thebibliography}
\end{document}